\font\twelvemsbm=msbm10 scaled
\font\tenmsbm=msbm10 
\def\RR{{\mathbb R}} 
\def\CC{{\mathbb C}}
\def\div{{\rm div}}
\def\adj{\mathop{\rm adj}\nolimits}
\newcommand{\dint}{\iint}
\newcommand{\dist}{\operatorname{dist}}
\newcommand{\re}{\mathbb{R}}
\newcommand{\rn}{\mathbb{R}^n}
\newcommand{\reu}{\mathbb{R}^{n+1}_+}
\newcommand{\ree}{\mathbb{R}^{n+1}}
\newcommand{\NN}{\widetilde{N}_*}
\newcommand{\dd}{\mathbb{D}}
\newcommand{\vp}{\varphi}
\newcommand{\eps}{\varepsilon}
\newcommand{\epp}{\epsilon}
\newcommand{\s}{{\bf S}}
\newcommand{\pun}{\partial_\nu u}
\newcommand{\p}{p_{\alpha}}
\newcommand{\pp}{p^{+}}
\newcommand{\cS}{\mathcal{S}}
\newcommand{\tB}{\widetilde{B}}
\newcommand{\SL}{\mathcal{S}}
\newcommand{\D}{\mathcal{D}}
\newcommand{\lb}{\Lambda^\beta}
\newcommand{\la}{\Lambda^\alpha}
\newcommand{\pt}{\mathcal{P}_t}
\newcommand{\fiint}{\fint\!\!\!\!\fint}
\newcommand{\dv}{\operatorname{div}}
\DeclareMathOperator{\supp}{supp}
\newtheorem{theorem}{Theorem}[section] 
\newtheorem{proposition}[theorem]{Proposition}
\newtheorem{lemma}[theorem]{Lemma}
\newtheorem{corollary}[theorem]{Corollary}
\theoremstyle{definition}
\newtheorem{remark}[theorem]{Remark}
\title[The method of layer potentials]{The method of layer potentials in $L^p$ and endpoint spaces for elliptic operators with $L^\infty$ coefficients}
\author{Steve Hofmann, Marius Mitrea, and Andrew J. Morris}
\thanks{The work of the first two authors has been supported in part by NSF. The work of the third author has been supported by EPSRC grant EP/J010723/1.\\
2010 {\it Mathematics Subject Classification.} 35J25, 58J32, 31B10, 31B15, 31A10, 45B05, 47G10, 78A30.
{\it Key words:} elliptic operators, layer potentials, Calder\'on-Zygmund theory, atomic estimates, boundary value problems}
\date{November 13, 2013}
\begin{document} 
\begin{abstract}
We consider layer potentials associated to elliptic operators $Lu=-\div\big(A \nabla u\big)$ acting in the upper half-space $\reu$ for $n\geq 2$, or more generally, in a Lipschitz graph domain, where the coefficient matrix $A$ is $L^\infty$ and $t$-independent, and solutions of $Lu=0$ satisfy interior estimates of De Giorgi/Nash/Moser type. A ``Calder\'on-Zygmund" theory is developed for the boundedness of layer potentials, whereby sharp $L^p$ and endpoint space bounds are deduced from $L^2$ bounds. Appropriate versions of the classical ``jump-relation" formulae are also derived.  The method of layer potentials is then used to establish well-posedness of boundary value problems for $L$ with data in $L^p$ and endpoint spaces.
\end{abstract}
 
\maketitle

\tableofcontents

\section{Introduction}
\setcounter{equation}{0}
Consider a second order, divergence form elliptic operator
\begin{equation}\label{L-def-1}
L=-{\rm div}\,A(x)\nabla\quad\mbox{in}\,\,\,
\RR^{n+1}:=\{X=(x,t):\,x\in\RR^n,\,\,t\in\RR\},
\end{equation}
where $A$ is an $(n+1)\times(n+1)$ matrix of $L^\infty$, $t$-independent, complex coefficients, satisfying the uniform ellipticity condition
\begin{equation}\label{eq1.1*}
\Lambda^{-1}|\xi|^{2}\leq\Re e\,\langle A(x)\,\xi,\xi\rangle
:= \Re e\sum_{i,j=1}^{n+1}A_{ij}(x)\,\xi_{j}\,\overline{\xi}_{i}, \quad
\Vert A\Vert_{\infty}\leq\Lambda,
\end{equation}
for some $\Lambda\in(0,\infty)$, for all $\xi\in\CC^{n+1}$, and for almost every $x\in\RR^{n}$. The operator $L$ is interpreted in the usual weak sense via the accretive sesquilinear form associated with~\eqref{eq1.1*}. In particular, we say that $u$ is a \textbf{``solution''} of $Lu=0$, or simply $Lu=0$, in a domain $\Omega\subset \RR^{n+1}$, if $u\in L^2_{1,\,\mathrm{loc}}(\Omega)$ and $\int_{\RR^{n+1}} A\nabla u \cdot \nabla \Phi =0$ for all $\Phi\in C^\infty_0(\Omega)$.

 Throughout the paper, we shall impose the following 
`` {\bf  standard assumptions}":
\begin{enumerate}
\item[(1)] The operator $L =-\dv A\nabla$ is of the type defined in \eqref{L-def-1} and \eqref{eq1.1*}
above, with 
$t$-independent  coefficient matrix $A(x,t)=A(x)$. 
\smallskip
\item[(2)] Solutions of $Lu=0$ satisfy the interior De Giorgi/Nash/Moser (DG/N/M) type
estimates defined in \eqref{eq2.DGN} and \eqref{eq2.M} below.
\end{enumerate}
The paper has two principal aims.   
First, we prove sharp $L^p$ and endpoint space bounds for layer potentials associated
to any operator $L$ that, along with its Hermitian adjoint $L^*$, satisfies the standard assumptions.
These results are of ``Calder\'on-Zygmund" type, in the sense that the $L^p$ and endpoint space bounds are deduced from $L^2$ bounds.  Second, we 
use the layer potential method to obtain well-posedness
results for boundary value problems for certain such $L$.
The precise definitions of the layer potentials, and a brief historical summary of previous
work (including the known $L^2$ bounds), is given below.

Let us now discuss certain preliminary matters needed to state our main theorems. For the sake of notational convenience, we will often use capital letters to denote points in $\ree$, e.g., $X=(x,t),\, Y=(y,s)$.  We let $B(X,r):= \{Y\in\ree:\,|X-Y|<r\}$, and $\Delta(x,r):= \{y\in\rn:\,|x-y|<r\}$ denote, respectively, balls of radius $r$ in $\ree$ and in $\rn$. We use the letter $Q$ to denote a generic cube in $\rn$, with sides parallel to the co-ordinate axes, and we let $\ell(Q)$ denote its side length. We adopt the convention whereby $C$ denotes a finite positive constant that may change from one line to the next but ultimately depends only on the relevant preceding hypotheses. We will often write $C_p$ to emphasize that such a constant depends on a specific parameter $p$. We may also write $a\lesssim b$ to denote $a\leq Cb$, and $a\approx b$ to denote $a\lesssim b \lesssim a$, for quantities $a,b\in\RR$.

\noindent{\bf De Giorgi/Nash/Moser (DG/N/M) estimates}.
We say that a locally square integrable function $u$ is ``locally
H\"{o}lder continuous", or equivalently, satisfies 
``De Giorgi/Nash (DG/N) estimates" in a domain $\Omega\subset\RR^{n+1}$, 
if there is a positive
constant $C_0<\infty$, and an exponent $\alpha\in(0,1]$, such that for any ball
$B=B(X,R)$
whose concentric double $2B := B(X,2R)$
is contained in $\Omega$, we have
\begin{equation}
|u(Y)-u(Z)|\leq
C_0\left(\frac{|Y-Z|}{R}\right)^{\alpha}\left(\fint_{2B}|u|^{2}\right)^{1/2},
\label{eq2.DGN}\end{equation}
whenever $Y,Z\in B$. 
Observe that any function $u$ satisfying \eqref{eq2.DGN} also 
satisfies Moser's ``local boundedness" estimate (see \cite{Mo})
\begin{equation}
\sup_{Y\in B}|u(Y)|\leq C_0 \left(\fint_{2B}|u|^{2}\right)^{1/2}.\label{eq2.M}\end{equation}
Moreover, as is well known, \eqref{eq2.M} self improves to
\begin{equation}\label{eq2.Mr}
\sup_{Y\in B}|u(Y)|\leq C_r \left(\fint_{2B}|u|^{r}\right)^{1/r},\qquad \forall \,r\in(0,\infty).
\end{equation}

\begin{remark}\label{firstrem}
It is well known (see \cite{DeG,Mo,Na}) that when the coefficient matrix $A$ is real, solutions of $Lu=0$ satisfy the DG/N/M estimates \eqref{eq2.DGN} and \eqref{eq2.M}, 
and the relevant constants depend quantitatively on ellipticity and dimension only
(for this result, the matrix $A$ need not be $t$-independent).
Moreover, estimate \eqref{eq2.DGN}, which implies \eqref{eq2.M}, is stable under small complex perturbations
of the coefficients in the $L^\infty$ norm (see, e.g., \cite[Chapter~VI]{Gi} or \cite{A}).  Therefore, the standard 
assumption (2) above holds automatically for small complex
perturbations of real symmetric elliptic coefficients.  We also note that in the $t$-independent setting 
considered here, 
the DG/N/M estimates always hold when the ambient dimension 
$n+1$ is equal to 3 (see \cite[Section 11]{AAAHK}).
\end{remark}

We shall refer to the following quantities collectively as the ``{\bf standard constants}":
the dimension $n$ in \eqref{L-def-1}, the ellipticity parameter $\Lambda$ in \eqref{eq1.1*}, and
the constants $C_0$ and $\alpha$ in the DG/N/M estimates \eqref{eq2.DGN} and \eqref{eq2.M}.

In the presence of DG/N/M estimates (for $L$ and $L^*$), by \cite{HK}, both
$L$ and $L^*$ have fundamental solutions $E: \{(X,Y)\in\RR^{n+1}\times \RR^{n+1} : X\neq Y\} \to \CC$ and $E^*(X,Y) := \overline {E(Y,X)}$, respectively, satisfying $E(X,\cdot),\,E(\cdot,X)\in L^2_{1,\,\mathrm{loc}}(\RR^{n+1}\setminus\{X\})$ and
\begin{equation}\label{eq7.4a}
L_{x,t} \,E (x,t,y,s) = \delta_{(y,s)},\quad
L^*_{y,s}\, E^*(y,s,x,t)= L^*_{y,s} \,\overline {E (x,t,y,s)} = \delta_{(x,t)},
\end{equation}
where $\delta_X$ denotes the Dirac mass at the point $X$. In particular, this means that 
\begin{equation}\label{fundsolprop}
\int_{\RR^{n+1}} A(x) \nabla_{x,t} E(x,t,y,s) \cdot \nabla \Phi(x,t)\, dxdt= \Phi(y,s), \qquad (y,s)\in\RR^{n+1}\,,
\end{equation}
for all $\Phi\in C^\infty_0(\RR^{n+1})$. Moreover, by the $t$-independence of our coefficients, 
\begin{equation*}
E(x,t,y,s)=E(x,t-s,y,0)\,.
\end{equation*}

As is customary, we then define the single and double layer potential 
operators, associated to $L$, in the upper and lower half-spaces, by
\begin{equation}\label{eq2.23}\begin{split}
\mathcal{S}^\pm f(x,t)&:=\int\limits_{\RR^n}E(x,t,y,0)\,f(y)\,dy,
\qquad (x,t)\in\RR^{n+1}_\pm\,,
\\[4pt]
\mathcal{D}^\pm f(x,t) & 
:=\int_{\mathbb{R}^{n}}\overline{\Big(\partial_{\nu^*} E^*
(\cdot,\cdot,x,t)\Big)}(y,0)\,f(y)\,dy,\qquad (x,t)\in\RR^{n+1}_\pm\,,
\end{split}
\end{equation}
where $\partial_{\nu^*}$ denotes the outer co-normal derivative
(with respect to $\reu$) associated to the adjoint matrix $A^*$,
i.e., 
\begin{equation}\label{eq2.23*}
\Big(\partial_{\nu^*} E^*(\cdot,\cdot,x,t)\Big)(y,0):= -e_{n+1}\cdot A^*(y)\,\Big(\nabla_{y,s} 
E^*(y,s,x,t)\Big)\big|_{s=0}
\,.
\end{equation}
Here,  $e_{n+1}:= (0,...,0,1)$ is the standard unit basis vector in the $t$ direction.  Similarly, 
using the notational convention that $t=x_{n+1}$, we define the outer co-normal derivative
with respect to $A$ by
$$\partial_{\nu} u:= -e_{n+1}\cdot A \nabla u=-\sum_{j=1}^{n+1}A_{n+1,j}\,\partial_{x_j} u\,.$$
When we are working in a particular half-space (usually the upper one, by convention), 
for simplicity of notation, we shall often drop the superscript and write simply, e.g.,
$\SL,\, \mathcal{D}$ in lieu of $\SL^+,\,\mathcal{D}^+$.  At times, it may be necessary to identify the operator $L$ to which the layer potentials are associated
(when this is not clear from context), in which case we shall 
write $\mathcal{S}_L,\,\mathcal{D}_L,$ and so on.  

We note at this point that for each fixed $t>0$ (or for that matter, $t<0$),
the operator $f\mapsto \mathcal{S}f(\cdot,t)$ is well-defined on $L^p(\rn)$, 
$1\leq p<\infty$, by virtue of the estimate 
\[
\int_{\rn}|E(x,t,y,0)| \, |f(y)| \, dy \,\lesssim \, I_1(|f|)(x)\,,
\]
which follows from~\eqref{G-est1} below, where $I_1$ denotes the classical Riesz potential.
Also, the operator $f\mapsto \mathcal{D}f(\cdot,t)$ 
is well-defined on $L^p(\rn)$, $2-\eps<p<\infty$, by virtue of the estimate
\[
\int_{\rn}\big|\Big(\nabla E(x,t,\cdot,\cdot)\Big)(y,0)\big|^q \, dy \,\lesssim \, t^{n(1-q)}\,,\quad 1<q<2+\eps\,,
\]
which follows from \cite[Lemmata 2.5 and 2.8]{AAAHK} (see also \cite[Proposition 2.1]{AAAHK}, which guarantees that $\nabla E$ makes sense on horizontal slices in the first place).

We denote the boundary trace of the single layer potential by
 \begin{equation}\label{eq2.24}
S\!f(x):=\int\limits_{\RR^n}E(x,0,y,0)\,f(y)\,dy,\qquad x\in\RR^n,
\end{equation}
which is well-defined on $L^p(\rn)$, $1\leq p<\infty$, by~\eqref{G-est1} below. We shall also define, in Section~\ref{s2} below, boundary singular integrals
\begin{equation}
\begin{split}\label{eq1.6}Kf(x) & :=``\mathrm{p.v.}"\int_{\mathbb{R}^{n}}\overline{\Big(\partial_{\nu^*} E^* (\cdot,\cdot,x,0)\Big)}(y,0)\,f(y)\,dy\,,\\[4pt]
\widetilde{K}f(x) & := ``\mathrm{p.v.}"\int_{\mathbb{R}^{n}}\Big(\partial_\nu E(x,0,\cdot,\cdot)\Big)(y,0)\,f(y)\,dy\,,\\[4pt]
\mathbf{T} f(x) & := ``\mathrm{p.v.}\,"\int_{\RR^n} \Big(\nabla E\Big)(x,0,y,0)f(y)\,dy\,,
\end{split}
\end{equation}
where the ``principal value" is purely formal, since we do not actually establish convergence of a principal value.  
We shall give precise definitions and derive the jump relations for the layer potentials in Section \ref{s2}. Classically,
$\widetilde{K}$ is often denoted $K^{\ast}$, but we avoid this notation here, as $\widetilde{K}$ need not be the adjoint of $K$ unless
$L$ is self-adjoint.   In fact,  using the notation $\adj(T)$ to denote the Hermitian adjoint of an operator $T$ acting in $\mathbb{R}^n$, we have that $\widetilde{K}_L = \adj (K_{L^*})$.

Let us now recall
the definitions of the non-tangential maximal operators $N_{\ast},\widetilde{N}_{\ast}$,
and of the notion of ``non-tangential convergence". 
Given $x_0\in\mathbb{R}^{n}$, define
the cone $\Gamma(x_0):=\{(x,t)\in\mathbb{R}_{+}^{n+1}:|x_0-x|<t\}$. Then for measurable functions $F:\mathbb{R}_{+}^{n+1}\rightarrow\CC$, define
\begin{equation*}
\begin{split} N_{\ast} F(x_0)& :=\sup_{(x,t)\in\Gamma(x_0)}|F(x,t)|,\\[4pt]
\widetilde{N}_{\ast} F(x_0) & :=\sup_{(x,t)\in \Gamma(x_0)}\left(\fint\!\!\fint_{|(x,t)-(y,s)|<t/4}
|F(y,s)|^{2}dyds\right)^{1/2}\,,
\end{split}\end{equation*}
where $\fint_E f := |E|^{-1} \int_E f$ denotes the mean value. We shall say that  $F$ ``converges non-tangentially"
to a function $f:\rn\rightarrow\CC$, and write $F\overset{{\rm n.t.}}{\longrightarrow} f$, 
if for a.e. $x\in \rn$,
$$\lim\limits_{\Gamma(x)\ni(y,t)\to(x,0)}F(y,t)=f(x)\,.$$
These definitions have obvious analogues in the lower half-space $\mathbb{R}_{-}^{n+1}$ that we distinguish by writing $\Gamma^\pm, N^\pm_{\ast},\widetilde{N}^\pm_{\ast}$, e.g., the cone $\Gamma^-(x_0):=\{(x,t)\in\mathbb{R}_{-}^{n+1}:|x_0-x|<-t\}$.

As usual, for $1<p<\infty$, let $\dot{L}_1^p(\RR^n)$ denote the homogenous Sobolev space of order one, which is defined as the completion of $C_0^\infty(\rn)$, with respect to the norm $\|f\|_{\dot{L}^p_1}:=\|\nabla f\|_p$, realized as a subspace of the space $L^1_{\mathrm{loc}}(\RR^n)/\CC$ of locally integrable functions modulo constant functions.

As usual, for $0<p\leq1$, let $H^p_{\mathrm{at}}(\rn)$ denote the classical atomic Hardy space, which is a subspace of the space $\s'(\RR^n)$ of tempered distributions (see, e.g., \cite[Chapter III]{St2} for a precise definition). Also, for $n/(n+1)<p\leq1$, let $\dot{H}^{1,p}_{\mathrm{at}}(\rn)$ denote the homogeneous ``Hardy-Sobolev" space of order one, which is a subspace of $\s'(\RR^n)/\CC$ (see, e.g., \cite[Section~3]{MM} for further details). In particular, we call $a\in \dot{L}^2_1(\rn)$ a {\it regular atom} if there exists a cube $Q\subset\rn$ such that 
\begin{equation*}
\supp a\subset Q,\qquad
\|\nabla a\|_{L^{2}(Q)}\leq 
|Q|^{\frac{1}{2}-\frac{1}{p}}, 
\end{equation*}
and we define the space
\[
\dot{H}^{1,p}_{\mathrm{at}}(\rn) := \{{f\in\s'(\RR^n)/\CC} : \nabla f=\sum_{j=1}^\infty
\lambda_j\nabla a_j,\,\,(\lambda_j)_j\in\ell^p,\,\,
a_j\,\,\mbox{is a regular atom}\}, 
\]
where the series converges in $H^p_{\mathrm{at}}(\rn)$, and the space is equipped with the quasi-norm $\|f\|_{\dot{H}^{1,p}_{\mathrm{at}}(\rn)}:={\rm inf}\,[\sum_j |\lambda_j|^p]^{1/p}$, where the infimum is taken over all such representations.

We now define the scales 
\[
H^p(\RR^n):=\left\{
\begin{array}{l}
H^p_{\mathrm{at}}(\RR^n)\,,
\,\,0<p\leq 1,
\\[6pt]
L^p(\RR^n)\,,
\,\,1<p<\infty,
\end{array}
\right. \, \quad 
\dot{H}^{1,p}(\RR^n):=\left\{
\begin{array}{l}
\dot{H}^{1,p}_{\mathrm{at}}(\RR^n)\,,
\,\,\frac{n}{n+1}<p\leq 1,
\\[6pt]
\dot{L}_1^p(\RR^n)\,,
\,\,1<p<\infty.
\end{array}\right.
\]

We recall that, by the classical result of C. Fefferman (cf. \cite{FS}), 
the dual of $H^1(\rn)$ is $BMO(\rn)$.
Moreover, 
$(H^p_{\mathrm{at}})^*=\dot{C}^{\alpha}(\rn)$,  if  $\alpha:=n(1/p-1)\in (0,1)$,
where $\dot{C}^{\alpha}(\rn)$
denotes  the homogeneous H\"older space of order $\alpha$.
In general,
for a measurable set $E$, and for $0<\alpha<1$, the H\"{o}lder space $\dot{C}^\alpha (E)$
is defined to be the set 
of $f\in C(E)/ \CC$ satisfying
\[
\|f\|_{\dot{C}^\alpha}:= \sup \frac{|f(x)-f(y)|}{|x-y|^\alpha}\,<\,\infty,
\]
where the supremum is taken over all pairs $(x,y)\in E\times E $ such that $x\neq y$.
For $0\leq \alpha<1$, we define the scale
\[
\Lambda^\alpha(\RR^n):=\left\{
\begin{array}{l}
\dot{C}^{\alpha}(\rn)\,, \,\,0<\alpha < 1,
\\[6pt]
BMO(\RR^n)\,,\,\,\alpha =0\,.
\end{array}
\right.
\]

As usual, we say that  a function $F\in L^2_{\mathrm{loc}}(\reu)$ belongs to the tent space
$T^\infty_2(\reu)$, if it satisfies the Carleson measure condition
\[
\|F\|_{T^\infty_2(\reu)}:=
\left(\sup_Q\frac1{|Q|}\iint_{R_Q}|F(x,t)|^2\, \frac{dx dt}{t}\right)^{1/2}\,<\,\infty\,.
\]
Here, the supremum is taken over all cubes $Q\subset\RR^n$, and
$R_Q:=Q\times(0,\ell(Q))$ is the usual ``Carleson box" above $Q$.

With these definitions and notational conventions in place,
we are ready to state the first main result of this paper.  

\begin{theorem}\label{P-bdd1}
Suppose that $L$ and $L^*$ satisfy the standard assumptions, let $\alpha$ denote the minimum of the De Giorgi/Nash exponents for $L$ and $L^*$ in \eqref{eq2.DGN}, and set ${\p:=n/(n+\alpha)}$.
Then there exists $\pp>2$, depending only on the standard constants,
such that
\begin{align}
\sup_{t> 0}\|\nabla \mathcal{S} f(\cdot,t)\|_{L^p(\RR^n,\CC^{n+1})} & \leq  C_p\|f\|_{H^p(\RR^n)},
\qquad\forall\,p\in(\p,\pp)\,, 
\label{LP-2}
\\[4pt]
\|\widetilde{N}_*\left(\nabla{\mathcal{S}}f\right)\|_{L^p(\RR^n)} & \leq  C_p\|f\|_{H^p(\RR^n)},
\qquad\forall\,p\in(\p,\pp)\,, 
\label{LP-1}
\\[4pt]
\|\nabla_xS\!f\|_{H^{p}(\RR^n,\CC^n)}  & \leq  C_p\|f\|_{H^p(\RR^n)}, 
\qquad\forall\,p\in(\p,\pp)\,,
\label{LP-3a}
\\[4pt] 
\|\widetilde{K}f\|_{H^p(\RR^n)} & \leq  C_p\|f\|_{H^p(\RR^n)}, 
\qquad\forall\,p\in(\p,\pp)\,,
\label{LP-3}
\\[4pt]
\|N_*(\mathcal{D} f)\|_{L^p(\RR^n)} & \leq  C_p\|f\|_{L^p(\RR^n)}, 
\qquad\forall\,p\in\left(\frac{\pp}{\pp-1},\infty\right)\,,
\label{LP-4}
\\[4pt]
\|t\nabla \mathcal{D}f\|_{T^\infty_2(\reu)}& \leq  C\|f\|_{BMO(\RR^n)}\,, 
\label{LP-5}
\\[4pt]
\|\mathcal{D} f\|_{\dot{C}^\beta(\overline{\reu})}& \leq  {C_\beta}\|f\|_{\dot{C}^\beta(\rn)}\,, 
\qquad\forall\,\beta\in\left(0,\alpha\right)\,,
\label{LP-6}
\end{align}
for an extension of $\mathcal{D} f$ to $\overline{\reu}$, where $S$, $\mathcal{S}$, $\mathcal{D}$, and $\widetilde{K}$ may correspond to either $L$ or $L^*$, and the analogous bounds hold in the lower half-space.
\end{theorem}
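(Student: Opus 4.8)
The plan is to establish the $L^2$-based estimates first and then run a Calder\'on--Zygmund style extrapolation to reach the full range $(\p,\pp)$ and the endpoint spaces. Concretely, I would begin from the known $L^2$ bounds for the single layer potential and its gradient (the hypotheses guarantee, via \cite{AAAHK} and the references therein, that $\sup_{t}\|\nabla\SL f(\cdot,t)\|_2\lesssim\|f\|_2$ together with the square function bound $\|t\,\partial_t\nabla\SL f\|_{T^2_2}\lesssim\|f\|_2$ and the non-tangential maximal function bound $\|\NN(\nabla\SL f)\|_2\lesssim\|f\|_2$). The off-diagonal decay of the single layer kernel, which follows from the Gaussian-type bounds \eqref{G-est1} on $E$ and the Caccioppoli inequality applied to $\nabla_x E$, furnishes the $L^2$ ``$L^2$-$L^2$ off-diagonal'' estimates needed to treat $\nabla\SL$, $\widetilde K$, and $\mathbf{T}$ as Calder\'on--Zygmund-type operators with $L^2$ rather than pointwise kernel bounds.

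The core of the argument is then a good-$\lambda$ / atomic decomposition scheme. For the Hardy space endpoint $p\le 1$: given a $p$-atom $a$ supported in a cube $Q$, I would split $\RR^n$ into $2Q$ and the annuli $2^{k+1}Q\setminus 2^kQ$. On $2Q$ one uses $L^2$-boundedness and H\"older; on the far annuli one exploits the cancellation of the atom together with the DG/N H\"older regularity of $\nabla_x E(x,0,\cdot,0)$ in the $y$-variable to gain the decay $2^{-k(n/p-n+\alpha)}$, which is summable precisely because $p>\p=n/(n+\alpha)$; here one must also verify the correct molecular/atomic cancellation for $\nabla_x Sf$ so that the output is again in $H^p$ (not merely $L^p$), and similarly that $\widetilde Kf\in H^p$. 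For $1<p<\pp$ one interpolates, or more robustly runs the Calder\'on--Zygmund decomposition directly using the $L^2$ off-diagonal estimates (the abstract machinery of Blunck--Kunstmann / Auscher, or a direct good-$\lambda$ argument), which also pins down why $\pp>2$ is only ``a little'' above $2$: it is exactly the self-improvement exponent for the reverse-H\"older property of $\nabla E$ on slices, cited from \cite{AAAHK}. The estimates \eqref{LP-2} and \eqref{LP-1} for the solid and non-tangential maximal versions follow by combining the slice bound with the square-function/Carleson control and the standard $N_*$-versus-square-function comparison (Fefferman--Stein type), again using DG/N/M to pass between $L^2$ averages and pointwise values inside Whitney boxes.

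For the double layer, I would argue by duality: $\D_L$ is, up to the co-normal derivative, the ``transpose'' of $\SL_{L^*}$, so \eqref{LP-4} on $(\pp/(\pp-1),\infty)$ is dual to the Hardy-space/$L^p$ bounds for $\nabla\SL_{L^*}$ on $(\p,\pp)$ after controlling $N_*(\D f)$ by a maximal-function truncation of the single-layer gradient bounds; the key technical point is the jump-relation/Green-formula identity (to be proven in Section~\ref{s2}) relating $\D f$ to $\SL(\partial_\nu(\cdot))$-type expressions, which lets one transfer the tent-space and non-tangential bounds. The $BMO\to T^\infty_2$ Carleson estimate \eqref{LP-5} is the $H^1$-$BMO$ dual of a tent-space $T^1_2$ bound for $t\nabla\SL_{L^*}$ acting on atoms, obtained by the same annular decomposition; and the H\"older estimate \eqref{LP-6} follows from \eqref{LP-5} together with a Campanato-space characterization, using that $\D$ reproduces constants (so $\D 1=$ const, giving the modulo-constants normalization) and that DG/N regularity of the kernel upgrades the Carleson bound on $t\nabla\D f$ to the $\dot C^\beta(\overline{\reu})$ bound for every $\beta<\alpha$ via the standard ``fractional Carleson measure'' $\Rightarrow$ H\"older embedding.

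The main obstacle I anticipate is twofold: first, the operators are \emph{not} classical Calder\'on--Zygmund operators --- one only has $L^2$-averaged kernel bounds and DG/N H\"older continuity, not pointwise size and smoothness of the kernel --- so every step of the annular/good-$\lambda$ decomposition has to be re-done with $L^2$ averages over Whitney cubes, and one must be careful that the output lands in the correct endpoint space rather than a larger one (e.g.\ that $\widetilde Kf$ and $\nabla_x Sf$ are genuinely in $H^p$, which requires checking the atoms produced have the right cancellation, using $\widetilde K_L=\adj(K_{L^*})$ and a weak-type argument for the conormal trace). Second, making the duality for $\D$ rigorous requires the jump relations and a density/weak-$*$ argument at the endpoints $p=1$ and $\alpha=0$, where $H^1$ and $BMO$ must be handled via atoms and via the Carleson-measure characterization respectively rather than by straightforward integration by parts; establishing those jump relations in the rough-coefficient setting (done in Section~\ref{s2}) is itself the technical heart that the present theorem rests on.
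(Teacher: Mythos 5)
Your proposal follows essentially the same architecture as the paper's proof: atomic/molecular estimates with an annular decomposition and the DG/N H\"older regularity of the kernel for $\p<p\leq 1$ (including the verification of the moment conditions so that $\nabla_x S\!f$ and $\widetilde{K}f$ genuinely land in $H^p$), interpolation with the $L^2$ bound for $1<p\leq 2$, a reverse-H\"older self-improvement to reach $2<p<\pp$, duality with the single layer of $L^*$ for the double layer, and Fefferman--Stein-type arguments for the $BMO$ and H\"older endpoints.

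Two points where you diverge from, or under-specify, the paper's argument are worth recording. First, for $2<p<\pp$ interpolation is unavailable, and the paper's mechanism is quite specific: one proves the reverse H\"older inequality \eqref{newSt-3} for $\nabla_x\mathcal{S}_{t_0}f$ on cubes by splitting $f=f\,1_{4Q}+f\,1_{(4Q)^c}$, applying the slice Caccioppoli estimate of Proposition~\ref{propslice} and Sobolev's inequality to the far part, and absorbing the vertical derivative into the error term $N_{**}(\partial_t\mathcal{S}_tf)+|f|$, which is controlled on every $L^p$, $1<p<\infty$, precisely because $\partial_tE(x,t,y,0)$ \emph{is} a standard Calder\'on--Zygmund kernel uniformly in $t$ (cf.\ \eqref{eq4.lpnt}); Iwaniec's version of Gehring's lemma (Lemma~\ref{lemmaIwaniec}) then produces $\pp>2$. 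Your alternative via the Blunck--Kunstmann/Auscher good-$\lambda$ machinery would still require an analogous reverse-H\"older input on slices, so this step cannot be bypassed and should be made explicit. Second, \eqref{LP-6} is not deduced in the paper from \eqref{LP-5} by a fractional-Carleson-measure embedding; it is proved directly from Meyers' mean-oscillation criterion \cite{Me}, by rerunning the same annular decomposition with the $\dot{C}^\beta$ telescoping (gaining $k\,2^{-k(\alpha-\beta)}$ in place of $k\,2^{-k\alpha}$), which is where the restriction $\beta<\alpha$ enters; your route would require essentially the same $\beta$-dependent annular estimate, so nothing is saved by passing through \eqref{LP-5}, though both routes are viable.
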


To state our second main result, let us recall the definitions of the
 Neumann and  Regularity problems, with (for now) $n/(n+1)<p<\infty$:
\[
(\mbox{N})_p\,\,\left\{
\begin{array}{l}
Lu=0\mbox{ in }\reu,\\[6pt]
\widetilde{N}_*(\nabla u)\in L^p(\rn),\\[6pt]
\partial_\nu u(\cdot,0)=g\in H^p(\rn),
\end{array}\right.
\qquad
(\mbox{R})_p\,\,\left\{
\begin{array}{l}
Lu=0\mbox{ in }\reu,\\[6pt]
\widetilde{N}_*(\nabla u)\in L^p(\rn),\\[6pt]
u(\cdot,0)=f\in \dot{H}^{1,p}(\rn)\,,
\end{array}\right. 
\]
where we specify that the solution $u$, of either
$(N)_p$ or  $(R)_p$, will assume its boundary data in the
the following sense:
\begin{itemize}
\item $u(\cdot ,0)\in \dot{H}^{1,p}(\rn)$, 
and $u\overset{{\rm n.t.}}{\longrightarrow} u(\cdot,0)$;

\smallskip

\item $\nabla_x u(\cdot,0)$ and $\partial_\nu u(\cdot,0)$ belong to $H^p(\rn)$,
and are the weak limits, 
(in $L^p$, for $p>1$, and in the
sense of tempered distributions, if $p\leq 1$), 
as $t\to 0$, of
$\nabla_x u(\cdot,t)$, and of $-e_{n+1}\cdot A\nabla u(\cdot,t)$,
respectively. 
\end{itemize}
We also formulate 
the Dirichlet problem in $L^p$, with $1<p<\infty$:
\[
(\mbox{D})_p\,\,\left\{
\begin{array}{l}
Lu=0\mbox{ in }\reu,\\[6pt]
N_*(u)\in L^p(\rn),\\[6pt]
u(\cdot,0)=f\in L^p(\rn)\,,
\end{array}\right.
\]
and in $\Lambda^\alpha,$ with $0\leq\alpha<1$:
\[
(\mbox{D})_{\Lambda^\alpha}\,\,\left\{
\begin{array}{l}
Lu=0\mbox{ in }\reu,\\[6pt]
t\nabla u \in T^\infty_2(\reu)\,\,{\rm if}\,\, \alpha =0\,,\,\, {\rm or}\,\, u\in \dot{C}^\alpha(\overline{\reu})\,\,
{\rm if}\,\,0<\alpha<1\,,\\[6pt]
u(\cdot,0)=f\in \Lambda^\alpha(\rn)\,\,.
\end{array}\right.
\]
The solution $u$ of $(D)_p$, with data $f$, satisfies
\begin{itemize}
\item $u\overset{{\rm n.t.}}{\longrightarrow} f$, and $u(\cdot,t)\to f$ as $t\to 0$ in $L^p(\rn)$.
\end{itemize}
The solution $u$ of $(D)_{\la}$, with data $f$, satisfies
\begin{itemize}
\item  $u(\cdot,t)\to f$ as $t\to 0$
in the weak* topology on $\la,\, 0\leq \alpha <1$. 

\smallskip

\item $u \in \dot{C}^\alpha(\overline{\reu})$, and $u(\cdot,0) = f$ pointwise, $0<\alpha<1$.
\end{itemize}

\begin{theorem}\label{th2}  Let $L=-\dv A\nabla$ and $L_0=-\dv A_0\nabla$ be 
 as in \eqref{L-def-1} and \eqref{eq1.1*} with $A=A(x)$ and $A_0=A_0(x)$  
both $t$-independent, and suppose that $A_0$ is real symmetric. There exists $\eps_0>0$ and $\epsilon>0$, both depending only on dimension and the ellipticity of $A_0$, such that if
\[
\|A-A_0\|_\infty < \eps_0\,,
\]
then $(N)_p$, $(R)_p$, $(D)_q$ and $(D)_{\la}$ are uniquely solvable for $L$ and $L^*$ when $1-\epp<p<2+\epp$, $(2+\epp)'<q<\infty$ and $0\leq\alpha <n\epp/(1-\epp)$,
respectively.
\end{theorem}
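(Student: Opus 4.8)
The plan is to establish Theorem~\ref{th2} by the method of layer potentials, bootstrapping from the known $L^2$ theory for the perturbed operator $L$ (which holds by the results of \cite{AAAHK} and related work, since $A_0$ is real symmetric and $\|A-A_0\|_\infty$ is small) to the full range of exponents via Theorem~\ref{P-bdd1}. First I would record that, under the smallness hypothesis, $L$ and $L^*$ satisfy the standard assumptions (Remark~\ref{firstrem}), so Theorem~\ref{P-bdd1} applies and furnishes the relevant $H^p$, $L^p$, $\Lambda^\alpha$, $BMO$ and tent-space mapping properties of $\mathcal{S}$, $\mathcal{D}$, $S$, $\widetilde{K}$, together with the jump relations derived in Section~\ref{s2}. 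The key structural input is that for $L_0$ real symmetric, the boundary operators governing invertibility --- namely $S_{L_0}\colon H^{1,p}\to \dot{H}^{1,p}$ for the Regularity problem and $(\tfrac12 I + \widetilde{K}_{L_0})$, $(\pm\tfrac12 I + K_{L_0})$ for the Neumann and Dirichlet problems --- are invertible on $L^2$ and its dual scales; this is classical (Kenig, Verchota, etc.) in the real symmetric case.

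Next I would promote these $L^2$ invertibility statements to the stated $p$-ranges. The mechanism is analytic perturbation in two directions. In the coefficient direction: the maps $A\mapsto \widetilde{K}_{L}$, $A\mapsto S_L$, $A\mapsto K_L$ depend continuously (indeed analytically, off a discrete set) on $A$ in the appropriate operator norms, so invertibility of the $L_0$-operators on a fixed space persists for $\|A-A_0\|_\infty<\eps_0$ with $\eps_0$ uniform. In the exponent direction: one uses that invertibility on $H^{p_0}$ for some $p_0$ (here $p_0=2$), combined with the uniform boundedness on the whole scale $(\p,\pp)$ for Neumann/Regularity and $((\pp)',\infty)$ for Dirichlet provided by Theorem~\ref{P-bdd1}, yields invertibility on an \emph{open} set of $p$'s by a Sneiberg-type / complex-interpolation stability argument; the value $2+\epp$ (and its dual) is exactly the endpoint of the open interval around $2$ on which invertibility survives. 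For the $\Lambda^\alpha$ scale one argues by duality, $(\tfrac12 I + K_{L^*})^* = \tfrac12 I + \widetilde{K}_L$ acting between $\dot{C}^\alpha$ (or $BMO$) and $H^p$ via the pairing $\langle H^p,\dot{C}^\alpha\rangle$, with $\alpha = n(1/p-1)$, which pins down the range $0\le\alpha < n\epp/(1-\epp)$.

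With invertibility of the boundary operators in hand, existence follows by the standard ansatz: $u:=\mathcal{D}(\pm\tfrac12 I + K_L)^{-1}f$ solves $(D)_p$ and $(D)_{\la}$, while $u:=\mathcal{S}(\tfrac12 I + \widetilde{K}_L)^{-1}g$ solves $(N)_p$, and $u:=\mathcal{S}(S_L)^{-1}f$ solves $(R)_p$; that these $u$ attain their data in the senses specified before the theorem is precisely what the jump relations of Section~\ref{s2} and the non-tangential/convergence estimates \eqref{LP-1}--\eqref{LP-6} of Theorem~\ref{P-bdd1} give. For uniqueness one needs the companion fact that a null solution (with the relevant $\widetilde{N}_*$ or $N_*$ control and vanishing data) must vanish; this is obtained by a Green's formula / Fatou-type representation showing any such $u$ equals a layer potential of its own (zero) boundary data, again using the jump relations plus the $L^2$-based uniqueness for $L_0$ transferred by perturbation, or alternatively by the duality between well-posedness of $(R)_{p'}$ for $L^*$ and uniqueness for $(N)_p$ for $L$ (and similarly $(D)$ versus $(R)$).

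The main obstacle I expect is not the abstract perturbation scheme but making the exponent-stability step genuinely quantitative and consistent across all four problems \emph{with a single} $\epp$: one must check that the same open interval $(1-\epp,2+\epp)$ works simultaneously for $\widetilde{K}_L$ on $H^p$ (Neumann), for $S_L\colon H^{1,p}\to\dot{H}^{1,p}$ (Regularity), and --- via duality --- for $K_{L^*}$ on the dual scale (Dirichlet in $L^q$ and in $\Lambda^\alpha$), and that the endpoint cases $p=1$ (Hardy space) and $\alpha=0$ ($BMO$), which sit outside the ``easy'' interpolation range, are reached using the genuinely endpoint bounds \eqref{LP-3}, \eqref{LP-5} of Theorem~\ref{P-bdd1} rather than soft arguments. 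A secondary technical point is verifying that the Hardy-Sobolev atomic estimate \eqref{LP-3a} is strong enough to invert $S_L$ on $\dot{H}^{1,p}_{\mathrm{at}}$ for $p$ slightly below $1$, i.e.\ down to $p>n/(n+1)$ in the real symmetric case $L_0$, and then transferring that to $L$; here one leans on the identity $\nabla_x S f$ mapping regular atoms to fixed multiples of molecules, together with $\widetilde{K}$ and the tangential derivative operator $\mathbf{T}$ from \eqref{eq1.6} encoding $\partial_\nu \mathcal{S} - \partial_\nu \mathcal{S}^-$ and $\nabla_x S$.
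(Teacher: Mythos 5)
Your overall architecture (the layer potential ansatz, invertibility of $(\pm\tfrac12)I+\widetilde{K}$ and of $S$, coefficient perturbation from $L_0$ to $L$, duality for the Dirichlet problem, Fatou-type results for attainment of the data) matches the paper's. But there is a genuine gap in your exponent-direction step. You propose to start from invertibility at the single exponent $p_0=2$ and reach the whole range $1-\epp<p<2+\epp$ by a Sneiberg-type stability argument, asserting that $2+\epp$ is ``exactly the endpoint of the open interval around $2$ on which invertibility survives.'' Sneiberg's theorem only yields invertibility on \emph{some} open interval about $p_0=2$, whose length is controlled by the operator norms on the ambient scale and by the norm of the inverse at $p_0$; nothing forces that interval to reach down to $p=1$, let alone below it, and the stated range contains all of $[1,2]$. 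As written, your mechanism proves solvability only for $p$ in an unquantified neighborhood of $2$.

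The paper bridges this differently: for the real symmetric operator $L_0$ it invokes the Kenig--Pipher solvability of $(N)_p$ and $(R)_p$ in the full range $1\le p<2+\epp$ \cite{KP}, and converts it into the a priori lower bounds \eqref{eq5.2a}--\eqref{eq5.3a}, namely $\|f\|_{H^p}\lesssim\|((\pm1/2)I+\widetilde{K}_{L_0})f\|_{H^p}$ and $\|f\|_{H^p}\lesssim\|\nabla_x S_{\!L_0}f\|_{H^p}$, valid on that whole range, by Verchota's two-sided argument: the triangle inequality, the jump relations $\partial_\nu\mathcal{S}^\pm_{L_0}f(\cdot,0)=((\pm1/2)I+\widetilde{K}_{L_0})f$, the equivalences $\|\partial_\nu u_0^\pm\|_{H^p}\approx\|\nabla_x u_0^\pm(\cdot,0)\|_{H^p}$ coming from solvability in \emph{both} half-spaces, and the fact that the tangential gradient of the single layer does not jump. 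Invertibility on all of $1\le p<2+\epp$ for $L_0$ then follows by a method of continuity connecting $L_0$ to $-\Delta$, and is transferred to $L$ by a second method of continuity along $L_\sigma=(1-\sigma)L_0+\sigma L$ using the uniform perturbation estimate \eqref{eq5.1a}. Only at that point is Sneiberg invoked (in the quasi-Banach form of \cite{KM}), and only to extrapolate from $p=1$ down to $p>1-\epp$, together with the open mapping theorem of \cite{MMMM} to bound the inverses. To repair your argument you must import the full-range real-symmetric solvability and Verchota's argument as the paper does; starting from $p=2$ alone is not enough. (Your uniqueness discussion is also thinner than the paper's, which defers $(N)_p$, $(R)_p$, $(D)_q$ to \cite{HMaMo} and $(D)_{\lb}$, $\beta>0$, to \cite{BM}, and proves the $BMO$ case via Proposition~\ref{prop-BMO-unique}; but that is a matter of attribution rather than a flaw in the scheme.)
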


\begin{remark}
By Remark~\ref{firstrem}, both $L$ and $L^*$ satisfy the ``standard assumptions" under the hypotheses of Theorem \ref{th2}. 
\end{remark}

\begin{remark}
Theorems \ref{P-bdd1} and \ref{th2} continue to hold,
with the half-space $\reu$ replaced by a Lipschitz graph domain of the form
$\Omega=\{(x,t)\in\ree: \, t>\phi(x)\}$, where $\phi:\rn \to \re$ is a Lipschitz function.
Indeed, that case may be reduced to that of the half-space by a standard pull-back mechanism.
We omit the details. 
\end{remark}

\begin{remark}
In the case of the Dirichlet problem with $\dot{C}^\alpha$ data, we answer in the affirmative a higher dimensional version of a question posed in $\RR^2_+$ in \cite{SV}. We note that this particular result is new even in the case $A=A_0$ (i.e., in the case that $A$
is real symmetric).
\end{remark}

Let us briefly review some related history.  We focus first on the question of boundedness of 
layer potentials.  As we have just noted,
our results extended immediately to the setting of a Lipschitz graph domain.
The prototypical result in that setting is the result of
Coifman, McIntosh and 
Meyer~\cite{CMM} concerning the $L^{2}$ boundedness
of the Cauchy integral operator on a Lipschitz curve,
which implies $L^2$ bounds for the layer potentials associated to the Laplacian 
via the method of rotations.
In turn, the corresponding $H^p/L^p$ bounds follow by classical Calder\'on-Zygmund theory.

For the  variable coefficient operators considered here, 
the $L^2$ boundedness theory 
(essentially, the case $p=2$ of Theorem \ref{P-bdd1}, along with $L^2$ square function estimates)
was introduced in 
\cite{AAAHK}. In that paper, it was shown, first, that such $L^2$ bounds (along with $L^2$ invertibility for
$\pm(1/2)I+K$) are stable under
small complex $L^\infty$ perturbations of the coefficient matrix, and second,
that these boundedness and invertibility results hold in the case that $A$ is real and symmetric (hence also for complex perturbations
of  real symmetric $A$).  The case $p=2$ for $A$  real, but not necessarily symmetric,
was treated in \cite{KR} in the case $n=1$ (i.e., in ambient dimension $n+1 = 2$), 
and in \cite{HKMP1}, in all dimensions.  
Moreover, in hindsight, in the special 
case that the matrix $A$ is of the
``block" form
\[
\left[\begin{array}{c|c}
 & 0\\ B & \vdots\\
 & 0\\
\hline 0\cdots0 & 1\end{array}\right],
\]
 where $B=B(x)$ is a $n\times n$ matrix, $L^2$ bounds for layer potentials follow 
 from the solution of the Kato problem
\cite{AHLMcT},
 since in the block case the single layer potential is given by
 $\SL f(\cdot,t)= (1/2) J^{-1/2} e^{-t\sqrt{J}},$
 where $J:= -\dv_x B(x)\nabla_x$.

 Quite recently, the case $p=2$ of Theorem \ref{P-bdd1} was shown to hold in general,
 for $L$ and $L^*$ satisfying the ``standard assumptions",
 in work of Rosen \cite{R}, in which $L^2$ bounds for layer potentials are obtained
 via results of \cite{AAM} concerning functional calculus of certain 
 first order ``Dirac-type" operators\footnote{
 A direct proof of these $L^2$ bounds for layer potentials, 
 bypassing the functional calculus results of \cite{AAM},
 will appear in \cite{GH}.}.  We note further that Rosen's $L^2$ estimates do not require the
 DG/N/M hypothesis (rather, just ellipticity and $t$-independence).  
 On the other hand, specializing to the ``block" case mentioned above,
 we observe that counter-examples in \cite{MNP} and \cite{Frehse} 
(along with some observations in \cite{AuscherSurvey}), show that the 
full range of $L^p$ and Hardy space  results treated in the present paper 
cannot be obtained without assuming DG/N/M.
It seems very likely that $L^p$ boundedness for some restricted range of $p$ should still
hold, even in the absence of DG/N/M, as is true in the special case of the ``block matrices" 
treated in \cite{AuscherSurvey}, \cite{BK}, \cite{HoMa}, 
and \cite{May}, 
but we have not considered this question here.
 We mention also that even in the presence of DG/N/M (in fact, even for $A$ real and symmetric), 
the constraint on the
upper bound on $p$ in \eqref{LP-2}-\eqref{LP-1}
is optimal.  To see this, consider the block case, so that $L$ is of the form
$Lu=u_{tt} +\div_x B(x)\nabla_xu=: u_{tt}-Ju$, 
where $B=B(x)$ is an $n\times n$ uniformly elliptic matrix.  Thus, $\cS f(\cdot,t)= (1/2) J^{-1/2}e^{-t\sqrt{J}} f$, so that, considering only the tangential part of the gradient in \eqref{LP-2}, and letting $t\to 0$, we obtain as a consequence of  \eqref{LP-2} that
\begin{equation}\label{eq4.68}
\|\nabla_xJ^{-1/2} f\|_p \lesssim \|f\|_p\,.
\end{equation}
But by Kenig's examples (see \cite[pp.~119--120]{AT}), for each $p>2$, there is a $J$ as above
for which the Riesz transform bound \eqref{eq4.68} fails.  The matrix $B$  may even be taken to be real symmetric.
    Thus, our results are in the nature of best possible, in the sense that, first, the DG/N/M hypothesis is needed to treat $p$ near (or below) 1, and second,
 that even with DG/N/M, the exponent $\pp$ is optimal.

As regards the question of solvability, addressed here in Theorem \ref{th2}, we recall that
in the special case of the Laplacian on a Lipschitz domain,  solvability of the $L^p$
Dirichlet problem is due to Dahlberg \cite{D}, while the Neumann and Regularity problems
were treated first, in the case $p=2$, by Jerison and Kenig  \cite{JK2}, and then by Verchota
\cite{V}, by an alternative proof using the method of layer potentials; and second, 
in the  case $1<p< 2+\eps$,
by Verchota \cite{V} (Regularity problem only), and in the case $1\leq p< 2+\eps$  
by Dahlberg and Kenig \cite{DK} (Neumann and Regularity), and finally, 
in the case $1-\eps<p<1$ by Brown \cite{Br} (who then obtained $D_{\la}$ by duality).  
A conceptually different proof of the latter result has been 
subsequently given by Kalton and Mitrea in \cite{KM} using a general perturbation 
technique of functional analytic nature\footnote{Thus answering a question 
posed by E. Fabes.}.
More generally, in the setting of variable coefficients,
in the special case that $A=A_0$ (i.e., that $A$ is real symmetric), the $L^p$ results for the Dirichlet problem were obtained by Jerison and Kenig \cite{JK1}, and for the Neumann and Regularity problems
by Kenig and Pipher in \cite{KP} (the latter authors also treated the analogous Hardy space theory 
in the case $p=1$).   
The case $p=2$  of Theorem \ref{th2} 
(allowing complex coefficients) was obtained first in \cite{AAAHK}, with an alternative proof
given in \cite{AAH}.  The case $n=1$ (i.e., in ambient dimension $n+1=2$) of Theorem
\ref{th2} follows from the work of Barton \cite{Bar}. 
  
In the present work, we consider solvability of boundary value problems
only for complex perturbations of real, symmetric operators, but
we point out that there has also been some recent progress in the case of non-symmetric $t$-independent operators.  For real, non-symmetric coefficients, 
the case $n=1$ has been treated by Kenig, Koch, Pipher and Toro
\cite{KKPT} (Dirichlet problem), and by Kenig and Rule \cite{KR} 
(Neumann and Regularity). 
The work of Barton
\cite{Bar} allows for
complex perturbations of the results of \cite{KKPT} and \cite{KR}.
The higher dimensional case
$n>1$ has very recently been treated in \cite {HKMP1} (the Dirichlet
problem for real, non-symmetric operators),
and in 
\cite{HKMP2} (Dirichlet and Regularity, for 
complex perturbations of the real, non-symmetric case).
In these results for non-symmetric operators, 
necessarily there are additional restrictions on the range of allowable $p$, as
compared to the symmetric case 
(cf. \cite{KKPT}).  We remark that in the non-symmetric setting, with $n>1$, 
the Neumann problem remains open.

We mention that we have also obtained an analogue of Theorem \ref{th2} for the Transmission
problem, which we plan to present in a forthcoming publication \cite{HMM}.

 Finally, let us discuss briefly the role of $t$-independence in our ``standard assumptions".
 Caffarelli, Fabes and Kenig \cite{CFK} have shown that some
regularity, in a direction transverse to the boundary, is needed to obtain $L^p$ solvability for,
say, the Dirichlet problem.   Motivated by their work, one may naturally split
the theory of boundary value problems for elliptic operators
in the half-space\footnote{There are analogues
of the theory in a star-like Lipschitz domain.}  into two parts:
1) solvability theory for $t$-independent operators, and 2) solvability results in which the discrepancy
$|A(x,t) -A(x,0)|$, which measures regularity in $t$ at the boundary, is controlled by a Carleson measure estimate of the type considered in \cite{FKP}\footnote{The Carleson measure
control of \cite{FKP} is essentially optimal, in view of \cite{CFK}.}, and in which one has some good solvability result for the operator with $t$-independent coefficients $A_0(x):=A(x,0)$. 
The present paper, and its companion article \cite{HMM}, fall into category 1).  The paper \cite{HMaMo} falls into category 2), and uses our results
here to obtain boundedness and solvability results for operators
in that category, in which the Carleson measure estimate for the discrepancy is sufficiently small
(in this connection, see also the previous work \cite{AA}, which treats the case $p=2$).

\vskip 0.08in
\noindent{\it Acknowledgments.} The first named author thanks S. Mayboroda for suggesting a 
simplified proof of estimate \eqref{LP-4}.   The proof of item (vi) of Corollary \ref{cor4.47} arose
in discussions between the first author and M. Mourgoglou.

\section{Jump relations and definition of the boundary integrals}\label{s2}
\setcounter{equation}{0}
Throughout this section, we impose the ``standard assumptions" defined previously. The operators ${\rm div}$ and $\nabla$ are considered in all $n+1$ 
variables, and we write ${\rm div}_x$ and $\nabla_x$ when only the first $n$ variables are 
involved. Also, since we shall consider operators $T$ that may be viewed as acting either
in $\RR^{n+1}$, or in $\RR^n$ with the $t$ variable frozen, we need to distinguish Hermitian 
adjoints in these two settings.  We therefore use $T^*$ to denote the $(n+1)$-dimensional 
adjoint of $T$, while $\adj(T)$ denotes the adjoint of $T$ acting in $\RR^n$.

As usual, to apply the layer potential method, we shall need to understand the jump relations for the co-normal 
derivatives of $u^\pm= \mathcal{S}^\pm f.$  To this end, 
let us begin by recording the fact that, by the main result of \cite{R},
\begin{equation}\label{eq4.prop46}
\sup_{\pm t> 0}\|\nabla \mathcal{S}_L^\pm\,f(\cdot,t)\|_{L^2(\RR^n,\CC^{n+1})}+\sup_{\pm t> 0}
\|\nabla \mathcal{S}_{L^*}^\pm\,f(\cdot,t)\|_{L^2(\RR^n,\CC^{n+1})}\leq C\|f\|_{L^2(\RR^n)}\,.
\end{equation}
Combining the last estimate with \cite[Lemma 4.8]{AAAHK} (see Lemma~\ref{l4.8*} below), we obtain
\begin{equation}\label{eq4.9}
\|\widetilde{N}_*^\pm(\nabla \cS^\pm f)\|_{L^2(\RR^n)}\leq C\|f\|_{L^2(\RR^n)}\,.
\end{equation}

Next, we recall the following fact proved in \cite{AAAHK}. Recall that $e_{n+1} :=(0,...,0,1)$ denotes the standard unit basis vector in the $t=x_{n+1}$ direction.
\begin{lemma}[{\cite[Lemmata 4.1 and 4.3]{AAAHK}}]\label{l4.1}
Suppose that $L$ and $L^*$ satisfy the standard assumptions. If $Lu=0$ in $\RR_{\pm}^{n+1}$ and $\widetilde{N}_*^\pm(\nabla u) \in L^2(\RR^n)$, then the co-normal derivative $\partial_\nu u(\cdot,0)$ exists in the variational sense and belongs to $L^2(\RR^n)$,  i.e., there exists a unique $g\in L^2(\RR^n)$, and we set $\partial_\nu u(\cdot,0):=g$, with $\|g\|_2\leq C\|\widetilde{N}_*^\pm(\nabla u)\|_2$, such that
\begin{enumerate}
\item[(i)] $\int_{\RR_{\pm}^{n+1}}A\nabla u\cdot \nabla\Phi\, dX = \pm \int_{\RR^n} g\, \Phi(\cdot,0) \, dx\,$ for all $\Phi \in C^\infty_0(\ree)$.
\item[(ii)] $-\langle A\nabla u(\cdot,t),e_{n+1}\rangle \to g$ weakly in $L^2(\rn)$ as $t\to 0^\pm$.
\end{enumerate}
Moreover, there exists a unique $f\in \dot{L}_1^2(\rn)$ , with $\|f\|_{\dot{L}^2_1(\rn)}\leq C\|\widetilde{N}_*^\pm(\nabla u)\|_2$, such that
\begin{enumerate}
\item[(iii)] $u\to f$ non-tangentially.
\item[(iv)] $\nabla_xu(\cdot,t)\to \nabla_x f$ weakly in $L^2(\rn)$ as $t\to 0^\pm$.
\end{enumerate}
\end{lemma}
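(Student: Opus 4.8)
The plan is to reduce everything to the $L^2$ theory of the form associated to $L$, exploiting the $t$-independence of $A$ to produce the boundary traces via a limiting argument along horizontal slices. First I would establish the existence of the co-normal trace $g$. Working in $\reu$ (the lower half-space being symmetric), set $u_\tau(x,t):=u(x,t+\tau)$ for $\tau>0$; each $u_\tau$ solves $Lu_\tau=0$ in a neighbourhood of $\overline{\reu}$, so by interior Caccioppoli and the $\widetilde N_*$ bound one gets uniform control $\sup_{t>0}\|\nabla u(\cdot,t)\|_{L^2(\rn)}\lesssim \|\widetilde N_*^+(\nabla u)\|_2$ (this is essentially the slice bound built into the definition of $\widetilde N_*$, combined with DG/N/M to pass from $L^2$ averages on Whitney balls to pointwise-in-$t$ slices). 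Consequently $\{-\langle A\nabla u(\cdot,t),e_{n+1}\rangle\}_{t>0}$ is bounded in $L^2(\rn)$, so along some sequence $t_k\to0^+$ it converges weakly to some $g\in L^2(\rn)$ with $\|g\|_2\lesssim\|\widetilde N_*^+(\nabla u)\|_2$. To see that (i) holds for this $g$: given $\Phi\in C_0^\infty(\ree)$, integrate by parts in the region $\{t>t_k\}$, using $Lu=0$ there, to get $\int_{t>t_k}A\nabla u\cdot\nabla\Phi = \int_{\rn}\langle A\nabla u(\cdot,t_k),e_{n+1}\rangle\Phi(\cdot,t_k)\,dx$; letting $k\to\infty$, the left side tends to $\int_{\reu}A\nabla u\cdot\nabla\Phi$ by dominated convergence (using $\nabla u\in L^2_{\rm loc}$) and the right side tends to $-\int_{\rn} g\,\Phi(\cdot,0)\,dx$ (weak $L^2$ convergence of the flux against the strongly convergent $\Phi(\cdot,t_k)\to\Phi(\cdot,0)$ in $L^2$). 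This proves (i); uniqueness of $g$ is immediate since (i) determines $\int g\,\phi$ for all $\phi\in C_0^\infty(\rn)$ (extend $\phi$ suitably). Once $g$ is pinned down by (i), the full family — not merely a subsequence — converges weakly to $g$, giving (ii): any weak subsequential limit must satisfy (i), hence equals $g$.

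Next I would handle the Dirichlet trace $f$. Again using $\sup_{t>0}\|\nabla u(\cdot,t)\|_2<\infty$, the tangential gradients $\nabla_x u(\cdot,t)$ form a bounded family in $L^2(\rn,\CC^n)$, so along a sequence converge weakly to some $h\in L^2(\rn,\CC^n)$; one checks $h$ is curl-free in the distributional sense (being a weak limit of gradients), so $h=\nabla_x f$ for some $f\in\dot L^2_1(\rn)$, with $\|\nabla_x f\|_2\lesssim\|\widetilde N_*^+(\nabla u)\|_2$, yielding (iv) (and, as before, the full family converges once $f$ is normalised). For the non-tangential convergence (iii), the key point is that $t$-independence lets one write, for $0<t<t'$, $u(x,t')-u(x,t)=\int_t^{t'}\partial_s u(x,s)\,ds$ and estimate the increment using the $L^2$ slice bound on $\partial_t u$ together with the DG/N/M Hölder estimate \eqref{eq2.DGN} applied on Whitney balls; this is precisely the mechanism of \cite[Lemma 4.1, 4.3]{AAAHK}, where one shows $u(\cdot,t)$ is Cauchy in an appropriate sense as $t\to0$ and identifies the limit with the weak limit $f$, upgrading weak convergence of traces to pointwise non-tangential convergence a.e.

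The main obstacle is the passage from the $L^2$ square-function/non-tangential control (which naturally lives on Whitney averages, i.e.\ the $\widetilde N_*$ functional) to honest pointwise-in-$t$ bounds on the slices $\nabla u(\cdot,t)$ and to \emph{a.e.} non-tangential convergence of $u$ itself — this is exactly where the De Giorgi/Nash/Moser hypothesis is indispensable and where the argument is most delicate; it requires carefully combining Caccioppoli on Whitney cubes, Moser's local boundedness \eqref{eq2.M}--\eqref{eq2.Mr}, and the interior Hölder estimate \eqref{eq2.DGN} to control oscillation of $u$ near the boundary, rather than any soft functional-analytic argument. Since this is precisely the content of \cite[Lemmata 4.1 and 4.3]{AAAHK}, which the present lemma merely quotes, in our write-up we would simply cite those results; the sketch above records how the proof goes for the reader's convenience.
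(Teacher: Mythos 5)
The paper does not prove this lemma at all: it is quoted verbatim from \cite[Lemmata 4.1 and 4.3]{AAAHK} (``we recall the following fact proved in \cite{AAAHK}''), so there is no in-paper argument to compare against line by line. Your sketch is nonetheless a correct outline of the argument in that reference, and it is consistent with the mechanisms this paper itself deploys in related places: the uniform slice bound $\sup_{t>0}\|\nabla u(\cdot,t)\|_{L^2(\rn)}\lesssim\|\widetilde N_*^{\pm}(\nabla u)\|_2$ is exactly what the paper proves in Lemma \ref{lemma6.2a} via the Caccioppoli-type estimate on horizontal slices \cite[(2.2)]{AAAHK}, the identification of the weak limit of the fluxes through integration by parts over $\{t>t_k\}$ is the standard route to (i)--(ii), and the pointwise increment bound $|u(x,t)-u(x,0)|\leq Ct\,\widetilde N_*(\nabla u)(x)$ (which the paper records as \eqref{neednow}, following \cite[Theorem 3.1]{KP}) is indeed the engine behind (iii). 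One small correction of emphasis: the passage from Whitney-averaged control to pointwise-in-$t$ slice bounds for $\nabla u(\cdot,t)$ rests on the $t$-independence of $A$ (Proposition \ref{propslice} and \cite[(2.2)]{AAAHK}), not on De Giorgi/Nash/Moser as you suggest; DG/N/M is what you need for the Moser/H\"older estimates governing the oscillation of $u$ itself in the proof of non-tangential convergence. Since the lemma is merely quoted here, citing \cite{AAAHK} as you propose is exactly what the authors do.
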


For each $f\in L^2(\RR^n)$, it follows from \eqref{eq7.4a} and \eqref{eq4.prop46} that $u:=\mathcal{S}^{\pm}f$ is a solution of $Lu=0$ in $\RR^{n+1}_\pm$, and this solution has the properties listed in Lemma~\ref{l4.1} because \eqref{eq4.9} holds. We then have the following result. 

\begin{lemma}\label{L-jump} 
Suppose that $L$ and $L^*$ satisfy the standard assumptions. If $f\in L^2(\RR^n)$, then almost everywhere on $\RR^n$, we have 
\begin{equation}\label{jump-1}
\partial_\nu\mathcal{S}^+f(\cdot,0) - \partial_\nu\mathcal{S}^-f(\cdot,0) = f,
\end{equation}
where the co-normal derivatives are defined in the variational sense of Lemma \ref{l4.1}.
\end{lemma}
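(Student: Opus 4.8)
The plan is to realize the jump formula as an identity between variational co-normal derivatives, obtained by testing the defining weak formulations of $\cS^+f$ and $\cS^-f$ against an arbitrary $\Phi\in C^\infty_0(\ree)$ and summing across the interface. First I would fix $f\in L^2(\RR^n)$ and set $u^\pm:=\cS^\pm f$; by \eqref{eq4.prop46} and \eqref{eq4.9}, these are $Lu^\pm=0$ solutions in $\RR^{n+1}_\pm$ with $\widetilde N_*^\pm(\nabla u^\pm)\in L^2(\RR^n)$, so Lemma~\ref{l4.1} applies and yields $g^\pm:=\partial_\nu u^\pm(\cdot,0)\in L^2(\RR^n)$ characterized by
\[
\int_{\RR^{n+1}_\pm}A\nabla u^\pm\cdot\nabla\Phi\,dX=\pm\int_{\RR^n}g^\pm\,\Phi(\cdot,0)\,dx,\qquad\forall\,\Phi\in C^\infty_0(\ree).
\]
Subtracting (the $-$ case from the $+$ case) gives $\int_{\RR^{n+1}}A\nabla u\cdot\nabla\Phi\,dX=\int_{\RR^n}(g^+-g^-)\Phi(\cdot,0)\,dx$, where now $u$ is the function equal to $u^\pm$ on $\RR^{n+1}_\pm$; here one must first check that $u\in L^2_{1,\mathrm{loc}}(\RR^{n+1})$ across $\{t=0\}$, i.e., that the gradient has no singular part on the hyperplane. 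This is where the non-tangential convergence statements (iii)--(iv) of Lemma~\ref{l4.1}, together with the continuity of $\cS f(\cdot,t)$ in $t$ coming from the $L^2$ bound on $\widetilde N_*^\pm(\nabla\cS^\pm f)$ and standard interior estimates, are used: the traces of $u^+$ and $u^-$ on $\{t=0\}$ agree (both equal $Sf$), so $u$ lies in $L^2_{1,\mathrm{loc}}$ near the hyperplane and no distributional jump in $\nabla u$ is created.

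With that in hand, the remaining task is to identify $\int_{\RR^{n+1}}A\nabla u\cdot\nabla\Phi\,dX$ directly. Using the defining property \eqref{fundsolprop} of the fundamental solution and Fubini (justified via the Riesz-potential bound \eqref{G-est1} and the slice estimates cited after \eqref{eq2.23}), one computes
\[
\int_{\RR^{n+1}}A(x)\nabla_{x,t}\Big(\int_{\RR^n}E(x,t,y,0)f(y)\,dy\Big)\cdot\nabla\Phi(x,t)\,dxdt
=\int_{\RR^n}\Big(\int_{\RR^{n+1}}A(x)\nabla_{x,t}E(x,t,y,0)\cdot\nabla\Phi(x,t)\,dxdt\Big)f(y)\,dy
=\int_{\RR^n}\Phi(y,0)\,f(y)\,dy.
\]
Comparing the two expressions for $\int_{\RR^{n+1}}A\nabla u\cdot\nabla\Phi$, and noting that $\{\Phi(\cdot,0):\Phi\in C^\infty_0(\ree)\}$ is dense in, say, $C^\infty_0(\RR^n)$ and hence in $L^2(\RR^n)$, we conclude $g^+-g^-=f$ a.e.\ on $\RR^n$, which is precisely \eqref{jump-1}.

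The main obstacle I expect is the interchange of the $t$-derivative with the $y$-integral defining $\cS^\pm f$ — i.e., justifying that $\nabla_{x,t}\cS^\pm f(\cdot,t)=\int_{\RR^n}\nabla_{x,t}E(\cdot,t,y,0)f(y)\,dy$ in a suitable sense and that the resulting double integral against $\nabla\Phi$ may be reordered by Fubini. The $t$-independence of $A$ (so that $E(x,t,y,0)=E(x,t-s,y,0)|_{s=0}$ behaves well under translation) and the slice bounds $\|\nabla E(x,t,\cdot,0)\|_{L^q}\lesssim t^{n(1-q)}$ from \cite{AAAHK}, combined with $f\in L^2$ and $\Phi$ compactly supported away from $\{t=0\}$ on its gradient's support only in part, make this a routine but genuinely technical point; away from the hyperplane everything is smooth by interior estimates, and near it one uses the variational characterization rather than pointwise differentiation, which is exactly why the statement is phrased through Lemma~\ref{l4.1}. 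A secondary point requiring care is the density/duality argument extracting the pointwise a.e.\ identity from the weak one, but this is standard once $g^\pm\in L^2$.
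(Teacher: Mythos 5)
Your argument is correct and is essentially the proof in the paper: test against $\Phi\in C^\infty_0(\RR^{n+1})$, apply Lemma \ref{l4.1}(i) in each half-space, and evaluate the resulting integral over $\RR^{n+1}$ via Fubini and \eqref{fundsolprop}. The only differences are cosmetic: the paper first takes $f\in C^\infty_0(\RR^n)$ (so the absolute convergence needed for Fubini follows at once from $\nabla E(\cdot,Y)\in L^p_{\mathrm{loc}}$) and then passes to general $f\in L^2$ by density using \eqref{eq4.9} and Lemma \ref{l4.1}, and it never needs your check that the glued function $u$ lies in $L^2_{1,\mathrm{loc}}$ across $\{t=0\}$, since the integral over $\RR^{n+1}$ is simply the sum of the two half-space integrals of the pointwise gradients and no integration by parts across the interface is performed.
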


\begin{proof}
Let us first suppose that $f\in C^\infty_0(\RR^n)$, and introduce 
\[
u:=\left\{
\begin{array}{l}
{\mathcal{S}}^+f\,\,\,\mbox{in}\,\,\,\RR^{n+1}_+,
\\[4pt]
{\mathcal{S}}^-f\,\,\,\mbox{in}\,\,\,\RR^{n+1}_-,
\end{array}
\right.
\]
and pick some $\Phi\in C^\infty_0(\RR^{n+1})$. 
By Lemma \ref{l4.1} (i) and the property
of the fundamental solution in \eqref{fundsolprop}, we obtain
\begin{align}\begin{split}\label{IBP-1}
\int_{\RR^n}\Big\{\partial_\nu\mathcal{S}^+f(x,0)& - \partial_\nu\mathcal{S}^-f(x,0) \Big\}\,\Phi(x,0)\,dx
\\[4pt]
&
=\int_{\RR^n}\partial_\nu\mathcal{S}^+f(x,0) \,\Phi(x,0)\,dx
\,-\,\int_{\RR^n} \partial_\nu\mathcal{S}^-f(x,0)\,\Phi(x,0)\,dx
\\[4pt]
&
=\int_{\RR^{n+1}_{+}}\langle A\nabla u,\nabla\Phi\rangle\,dX
\,+\,\int_{\RR^{n+1}_{-}}\langle A\nabla u,\nabla\Phi\rangle\,dX
\\[4pt]
&
=\int_{\RR^{n+1}}\Bigl\langle A(x)\left(
\int_{\RR^n}\nabla_{x,t}E(x,t,y,0)f(y)\,dy\right)\,,\,\nabla\Phi(x,t)
\Bigr\rangle\,dxdt
\\[4pt]
&
=\int_{\RR^n}f(y)\left(\int_{\RR^{n+1}}\langle A(x)\nabla_{x,t}E(x,t,y,0),
\nabla\Phi(x,t)\rangle\,dxdt\right)\,dy
\\[4pt]
&
=\int_{\RR^n}f(y)\,\Phi(y,0)\,dy.
\end{split}\end{align}
The use of Fubini's theorem in the fifth line is justified by absolute convergence, since
$\nabla E(\cdot,Y) \in L^p_{\mathrm{loc}}(\ree),\, 1\leq p<(n+1)/n$ (cf. \cite[Theorem 3.1]{HK}).

Given an arbitrary $f\in L^2(\RR^n)$, we may approximate $f$ 
by $f_k\in C^\infty_0(\RR^n)$, and observe that both the first and last lines in \eqref{IBP-1} converge appropriately (for the first line, this follows from
\eqref{eq4.9} and Lemma \ref{l4.1}). Then, since $\Phi$ was arbitrary in $C^\infty_0(\ree)$, (\ref{jump-1}) follows. 
\end{proof}

In view of (\ref{jump-1}), we now define the bounded operators $K,\, \widetilde{K}:L^2(\RR^n)\rightarrow L^2(\RR^n)$ and $\mathbf{T}: L^2(\RR^n)\to  L^2(\RR^n,\CC^{n+1})$, as discussed in \eqref{eq1.6}, rigorously by
\begin{align*}
\widetilde{K}_Lf&:=-{\textstyle{\frac{1}{2}}}f + \partial_\nu\mathcal{S}^+_Lf(\cdot,0)
={\textstyle{\frac{1}{2}}}f + \partial_\nu\mathcal{S}^-_Lf(\cdot,0)\\[4pt]
K_L f&:= \adj(\widetilde{K}_{L^*}) f\\[4pt]
\mathbf{T}_Lf &:= \Big(\nabla_x S_{\!L} f \,,\, \tfrac{-1}{A_{n+1,n+1}}\big(\widetilde{K}_{L}f + \textstyle{\sum_{j=1}^n A_{n+1,j}}\, \partial_{x_j}S_{\!L} f\big) \Big).
\end{align*}

We then have the following lemma, which we quote without proof  
from \cite{AAAHK}\footnote{\cite[Lemma 4.18]{AAAHK} assumes that \eqref{eq4.prop46} holds,
but as noted above,  it is now known that this is always the case, given our standard assumptions,
by the result of \cite{R}.}, although part (i) below is just a rephrasing of Lemma~\ref{l4.1}(ii) and Lemma~\ref{L-jump}.
\begin{lemma}[{\cite[Lemma 4.18]{AAAHK}}]\label{l4.ntjump} 
Suppose that $L$ and $L^*$ satisfy the standard assumptions. If $f\in L^2(\mathbb{R}^n)$, then
\begin{enumerate}
\item[(i)] $\partial_\nu(\mathcal{S}^\pm_Lf)(\cdot,0) = \left(\pm \frac{1}{2}I + \widetilde{K}_L\right)f$,\\
and $-\langle A\nabla{\mathcal{S}}_L^\pm f(\cdot, t),e_{n+1}\rangle \to \left(\pm \frac{1}{2}I + \widetilde{K}_L\right)f$ weakly in $L^2$ as $t\to 0^\pm$,
\end{enumerate}
where the co-normal derivative is defined in the variational sense of Lemma \ref{l4.1}.
\begin{enumerate}
\item[(ii)]  $\nabla \mathcal{S}^\pm_L f(\cdot,t) 
\to \left(\mp\frac{1}{2A_{n+1,n+1}}e_{n+1} + {\bf T}_L\right)f$ weakly in $L^2$ as $t\to 0^\pm$,
\end{enumerate}
where the tangential component of ${\bf T}_Lf$ equals $\nabla_x S_{\!L}f$.
\begin{enumerate}
\item[(iii)]$\mathcal{D}_L^{\pm }f (\cdot,t) \to \left(\mp\frac{1}{2}I + K_L \right)f$ weakly in $L^2$ as $t\to 0^\pm$.
\end{enumerate}
 \end{lemma}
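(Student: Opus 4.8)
The plan is to obtain parts (i) and (ii) as essentially formal consequences of Lemma~\ref{l4.1}, the jump relation of Lemma~\ref{L-jump}, and the definitions of $\widetilde{K}_L$ and $\mathbf{T}_L$, and to prove part (iii) — the genuine double-layer jump — by a transposition argument that trades $\mathcal{D}_L$ against the co-normal derivative of the single layer of $L^*$, exploiting $t$-independence of the coefficients. For (i): given $f\in L^2(\RR^n)$, the function $u=\mathcal{S}^\pm_L f$ solves $Lu=0$ in $\RR^{n+1}_\pm$ and, by \eqref{eq4.9}, has $\widetilde{N}_*^\pm(\nabla u)\in L^2(\RR^n)$, so Lemma~\ref{l4.1} applies; since, by definition, $\widetilde{K}_L f=-\tfrac12 f+\partial_\nu\mathcal{S}^+_L f(\cdot,0)=\tfrac12 f+\partial_\nu\mathcal{S}^-_L f(\cdot,0)$ (the two agreeing via Lemma~\ref{L-jump}), one reads off $\partial_\nu\mathcal{S}^\pm_L f(\cdot,0)=(\pm\tfrac12 I+\widetilde{K}_L)f$, and the stated weak $L^2$ convergence of $-\langle A\nabla\mathcal{S}^\pm_L f(\cdot,t),e_{n+1}\rangle$ to this limit is precisely Lemma~\ref{l4.1}(ii).

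For (ii), I would treat the tangential and normal components of $\nabla\mathcal{S}^\pm_L f$ separately. Tangentially, Lemma~\ref{l4.1}(iii)--(iv) give a non-tangential limit $f^\pm\in\dot{L}^2_1(\RR^n)$ with $\nabla_x\mathcal{S}^\pm_L f(\cdot,t)\to\nabla_x f^\pm$ weakly in $L^2$; one identifies $f^\pm$ with the boundary single layer $S_L f$ by a routine limiting argument (continuity of $E(\cdot,\cdot,y,0)$ near $(x_0,0)$ for the part of the $y$-integral away from $x_0$, and the size bound \eqref{G-est1} together with a Riesz-potential estimate for the part near $x_0$), so the tangential component of the weak limit is $\nabla_x S_L f$. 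For the normal component, observe that $\Re e\,A_{n+1,n+1}(x)\ge\Lambda^{-1}$ by \eqref{eq1.1*}, so multiplication by $A_{n+1,n+1}^{-1}$ is bounded on $L^2$, and the pointwise identity $\partial_t w=-A_{n+1,n+1}^{-1}\big(-\langle A\nabla w,e_{n+1}\rangle+\sum_{j=1}^n A_{n+1,j}\partial_{x_j} w\big)$ holds for every $w\in L^2_{1,\mathrm{loc}}$. Applying it to $w=\mathcal{S}^\pm_L f(\cdot,t)$ (whose gradient lies in $L^2$ uniformly in $t$ by \eqref{eq4.prop46}), sending $t\to0^\pm$, using part (i) for $-\langle A\nabla w,e_{n+1}\rangle$ and the tangential convergence for the remaining terms, and using that multiplication by the bounded functions $A_{n+1,n+1}^{-1}$, $A_{n+1,j}$ preserves weak $L^2$ convergence, gives $\partial_t\mathcal{S}^\pm_L f(\cdot,t)\to-A_{n+1,n+1}^{-1}\big((\pm\tfrac12 I+\widetilde{K}_L)f+\sum_{j=1}^n A_{n+1,j}\partial_{x_j}S_L f\big)$ weakly in $L^2$, which is exactly the $e_{n+1}$-component of $\big(\mp\tfrac{1}{2A_{n+1,n+1}}e_{n+1}+\mathbf{T}_L\big)f$ by the definition of $\mathbf{T}_L$ (whose tangential component is $\nabla_x S_L f$).

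For (iii), fix $g\in C^\infty_0(\RR^n)$. Using \eqref{eq2.23}, \eqref{eq2.23*} and Fubini's theorem — justified for each fixed $t\ne0$ by the $L^2$ slice estimate on $\nabla E$ — I would write $\langle\mathcal{D}^\pm_L f(\cdot,t),g\rangle_{L^2(\RR^n)}=\langle f,h_t\rangle_{L^2(\RR^n)}$, where $h_t(y):=\int_{\RR^n}\big(\partial_{\nu^*}E^*(\cdot,\cdot,x,t)\big)(y,0)\,g(x)\,dx$. Then, pulling $\nabla_{y,s}$ and $A^*(y)$ out of the integral and using $t$-independence in the form $E^*(y,s,x,t)=E^*(y,s-t,x,0)$, I would identify $h_t(y)=-\big\langle A^*(y)(\nabla\mathcal{S}_{L^*}g)(y,-t),e_{n+1}\big\rangle$ — the co-normal-for-$L^*$ flux of the single layer $\mathcal{S}_{L^*}g$ at height $-t$ (with $\mathcal{S}_{L^*}g$ meaning the potential in whichever half-space the height lies). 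Since $t\to0^\pm$ sends $-t\to0^\mp$, Lemma~\ref{l4.1}(ii) applied to the $L^*$-solution $\mathcal{S}^\mp_{L^*}g$, together with part (i) applied to $L^*$, gives $h_t\to(\mp\tfrac12 I+\widetilde{K}_{L^*})g$ weakly in $L^2$ — note the flip of sign against the $\pm$ in $\mathcal{D}^\pm_L$. Hence $\langle\mathcal{D}^\pm_L f(\cdot,t),g\rangle\to\langle f,(\mp\tfrac12 I+\widetilde{K}_{L^*})g\rangle=\langle(\mp\tfrac12 I+K_L)f,g\rangle$ since $K_L=\adj(\widetilde{K}_{L^*})$, and the uniform bound $\|\mathcal{D}^\pm_L f(\cdot,t)\|_2\le C\|f\|_2$, which follows from $\|h_t\|_2\lesssim\|g\|_2$ via \eqref{eq4.prop46}, upgrades this to weak $L^2$ convergence for all $g\in L^2(\RR^n)$. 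The main obstacle throughout is this transposition identity for $h_t$: one must carefully justify differentiation under the integral and the interchange of integrations (the $L^q$-slice bounds on $\nabla E$ for $1<q<2+\eps$ and \eqref{eq4.prop46} suffice), and keep exact track of the several complex conjugations and of $E^*(y,s,x,t)=\overline{E(x,t,y,s)}$, so that the $\pm/\mp$ sign in the jump ends up on the correct side; in (i)--(ii) the only substantive point is the identification $f^\pm=S_L f$ of the non-tangential trace of the single layer.
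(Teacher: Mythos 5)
Your proposal is correct, but note that the paper itself gives no proof of this lemma: it is quoted verbatim from \cite[Lemma 4.18]{AAAHK}, with only the remark that part (i) is a rephrasing of Lemma~\ref{l4.1}(ii) together with Lemma~\ref{L-jump} --- which is exactly your argument for (i). What you have done for (ii) and (iii) is reconstruct the cited argument, and the reconstruction is sound: the algebraic recovery of $\partial_t\mathcal{S}f$ from the co-normal and tangential derivatives via $\partial_t w=-A_{n+1,n+1}^{-1}\bigl(\partial_\nu w+\sum_{j\le n}A_{n+1,j}\partial_{x_j}w\bigr)$ is precisely how $\mathbf{T}_L$ is built in the paper's definition, and the transposition $h_t=\partial_{\nu^*}\mathcal{S}_{L^*}g(\cdot,-t)$, i.e. $\partial_{\nu^*}\mathcal{S}_{-t,L^*}=\adj(\mathcal{D}_{t,L})$, is the identity the paper itself records and uses in the proof of Corollary~\ref{r4.2}; your sign bookkeeping ($t\to0^\pm$ becoming $-t\to0^\mp$, whence the $\mp\frac12$) is right. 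The only step you compress is the identification of the non-tangential trace $f^\pm$ of $\mathcal{S}^\pm_Lf$ with $S_Lf$ (modulo constants), but your sketch --- interior H\"older continuity of $E(\cdot,\cdot,y,0)$ away from the pole for the far part of the integral, and the bound $|E(x,t,y,0)|\lesssim|x-y|^{1-n}$ with a maximal-function estimate for the near part --- does close it, and only the tangential gradient of $f^\pm$ is needed anyway. No gaps.
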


\section{A ``Calder\'{o}n-Zygmund" Theory for the boundedness of layer potentials:  Proof of Theorem~\ref{P-bdd1}}
\setcounter{equation}{0}
We continue to impose the ``standard assumptions" throughout this section.
We shall work in the upper half-space, the proofs of the analogous bounds for the lower half-space being essentially identical.
Our main goal in this section is to prove Theorem \ref{P-bdd1}.

We begin with some observations concerning the kernels of
the operators $f\mapsto \partial_t \mathcal{S}f(\cdot,t)$ and 
$f\mapsto \nabla_x \mathcal{S}f(\cdot,t),$  
which we denote respectively by 
$$K_t(x,y):=\partial_t E(x,t,y,0)\quad {\rm and } \quad \vec{H}_t(x,y) := \nabla_x E(x,t,y,0).$$
By the DG/N/M estimates \eqref{eq2.DGN} and \eqref{eq2.M} (see \cite[Theorem~3.1]{HK} and \cite[Lemma~2.5]{AAAHK}), for all $t\in\RR$ and $x,y\in\RR^n$ such that $|t|+|x-y|>0$, we have
\begin{align}\label{G-est1}
|E(x,t,y,0)| & \leq  \frac{C}{(|t|+|x-y|)^{n-1}},
\\[4pt]
|\partial_tE(x,t,y,0)| & \leq \frac{C}{(|t|+|x-y|)^{n}}\,,
\label{G-est2}
\end{align}
and, for each integer $m\geq 0$, whenever $2|h|\leq \max(|x-y|,|t|)$,
\begin{align}\begin{split}\label{G-est3}
&|(\partial_t)^m E(x+h,t,y,0)-(\partial_t)^mE(x,t,y,0)|\\
&\qquad+\,|(\partial_t)^mE(x,t,y+h,0)-(\partial_t)^mE(x,t,y,0)| \leq \,C_m\,\frac{|h|^\alpha}{(|t|+|x-y|)^{n+m-1+\alpha}},
\end{split}\end{align}
where $\alpha\in (0,1]$ is the minimum of the De Giorgi/Nash exponents for $L$ and $L^*$ in \eqref{eq2.DGN}. Thus,  $K_t(x,y)$ is 
a standard Calder\'on-Zygmund kernel,
uniformly in $t$, but $\vec{H}_t(x,y)$ is not, 
and for this reason the proof of Theorem \ref{P-bdd1}
will be somewhat delicate.   
On the other hand, the lemma below shows that the kernel
$\vec{H}_t(x,y)$ does satisfy a sort of weak ``1-sided" Calder\'on-Zygmund condition
similar to those considered by Kurtz and Wheeden in  \cite{KW}. In particular, the following lemma from \cite{AAAHK} is at the core of our proof of Theorem~\ref{P-bdd1}.

\begin{lemma}[{\cite[Lemma 2.13, (4.15) and (2.7)]{AAAHK}}]\label{Lemma1}
Suppose that $L$ and $L^*$ satisfy the standard assumptions.
Consider a cube $Q\subset\RR^n$ and fix any points $x,x'\in Q$ and $t,t'\in \RR$ such that $|t-t'|<2\ell(Q)$. For all $(y,s)\in\ree$, set
\[
u(y,s):=E(x,t,y,s)-E(x',t',y,s).
\]
If $\alpha>0$ is the H\"{o}lder exponent in \eqref{G-est3}, then for all integers $k\geq 4$, we have
\[
\sup_{s\in \mathbb{R}}\int_{2^{k+1}Q\setminus 2^kQ}|\nabla u(y,s)|^2\,dy
\leq C2^{-2\alpha k}\Bigl(2^k\ell(Q)\Bigr)^{-n}. 
\]
The analogous bound holds with $E^*$ in place of $E$.
\end{lemma}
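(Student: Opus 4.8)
The plan is to reduce everything to the pointwise bounds \eqref{G-est1}--\eqref{G-est3} together with the $L^2$-gradient bound \eqref{eq4.prop46}, exploiting the fact that $u(y,s) = E(x,t,y,s) - E(x',t',y,s)$ is a solution of $L^\ast$ (in the $(y,s)$ variables) away from the two points $(x,t)$ and $(x',t')$, both of which lie within $\sqrt{n+4}\,\ell(Q)$ of the center of $Q$ when $x,x'\in Q$ and $|t-t'|<2\ell(Q)$. Fix $k\geq 4$ and write $A_k := 2^{k+1}Q\setminus 2^kQ$. The key observation is that for $(y,s)$ with $y\in A_k$ and any $s\in\RR$ within, say, the slab $|s|\leq 2^{k-1}\ell(Q)$, the point $(y,s)$ is at distance comparable to $2^k\ell(Q)$ from both singularities, so $u$ solves $L^\ast u=0$ in a neighborhood of the relevant region; outside that slab the estimate is even easier and handled separately by brute force using \eqref{G-est3}.

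First I would use Caccioppoli's inequality for $L^\ast$: since $u$ is a solution in the enlarged region (a fattened version of $A_k$ crossed with an appropriate $s$-interval, staying at distance $\gtrsim 2^k\ell(Q)$ from the poles), we get
\[
\sup_{s}\int_{A_k}|\nabla u(y,s)|^2\,dy \;\lesssim\; \bigl(2^k\ell(Q)\bigr)^{-2}\,\sup_{s'}\int_{\widetilde{A}_k}|u(y,s')|^2\,dy,
\]
where $\widetilde A_k$ is a slightly dilated annulus and the $s'$-range is slightly enlarged (here I also need a standard $t$-independence/slicing argument, as in \cite{AAAHK}, to pass from a solid integral to the supremum over horizontal slices — this is exactly the content of the ``(2.7)'' reference). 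Then I bound $|u(y,s')|$ using the Hölder-type kernel estimate \eqref{G-est3} (the case $m=0$): since $|(x,t)-(x',t')|\lesssim \ell(Q)$ and $2|(x,t)-(x',t')|\leq 2^k\ell(Q)$ once $k\geq 4$, we have
\[
|u(y,s')| = |E(x,t,y,s') - E(x',t',y,s')| \;\lesssim\; \frac{\ell(Q)^\alpha}{\bigl(2^k\ell(Q)\bigr)^{n-1+\alpha}} \;=\; 2^{-\alpha k}\,2^{-(k)(n-1)}\,\ell(Q)^{-(n-1)} .
\]
Plugging this into the Caccioppoli bound and using $|\widetilde A_k|\approx (2^k\ell(Q))^n$ gives
\[
\sup_s\int_{A_k}|\nabla u|^2 \,dy\;\lesssim\; \bigl(2^k\ell(Q)\bigr)^{-2}\cdot \bigl(2^k\ell(Q)\bigr)^n\cdot \bigl(2^{-\alpha k}(2^k\ell(Q))^{-(n-1)}\bigr)^2 \;=\; C\,2^{-2\alpha k}\bigl(2^k\ell(Q)\bigr)^{-n},
\]
which is exactly the claimed estimate. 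The statement for $E^\ast$ is identical, using that $L$ also satisfies the standard assumptions.

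The main obstacle I anticipate is the passage from Caccioppoli (which naturally controls a solid integral $\iint |\nabla u|^2\,dy\,ds$ over a box) to the \emph{supremum over slices} $\sup_s \int |\nabla u(y,s)|^2\,dy$ that appears in the statement. This is where $t$-independence of the coefficients is essential: one uses that $\partial_s u$ again solves $L^\ast(\partial_s u)=0$, combined with a Moser/Sobolev-type argument on slices (the ``(2.7)'' and ``(4.15)'' ingredients quoted from \cite{AAAHK}), to upgrade an $L^2$ average in $s$ to an $L^\infty$ bound in $s$ without losing the decay rate. I would invoke those lemmata rather than reprove them. A secondary technical point is bookkeeping the geometry so that the fattened annuli and enlarged $s$-intervals still avoid the poles $(x,t),(x',t')$; this is automatic for $k\geq 4$ since then $2^k\ell(Q) \geq 16\,\ell(Q)$ dominates the diameter of $Q$ and the separation $|t-t'|$.
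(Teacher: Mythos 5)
The paper itself offers no proof of this lemma -- it is quoted verbatim from \cite{AAAHK} -- so there is no in-text argument to compare against; judged on its own merits, your proposal is essentially the correct (and intended) argument. You localize to cubes of side $\approx 2^k\ell(Q)$ covering the annulus, use the $t$-independent Caccioppoli-on-slices estimate (Proposition~\ref{propslice} in this paper, i.e.\ \cite[Proposition~2.1]{AAAHK}) to control the slice integral of $|\nabla u|^2$ by a solid average of $|u|^2$, and then bound $|u|$ pointwise by the H\"older-difference estimates; your arithmetic checks out. Two small repairs are needed. First, the case split in $s$ is unnecessary, and your fallback for large $|s|$ is not available as stated: since the poles $(x,t)$ and $(x',t')$ have spatial coordinates in $Q$, every point $(y,s)$ with $y\in 2^{k+1}Q\setminus 2^kQ$ and arbitrary $s\in\RR$ lies at distance $\gtrsim 2^k\ell(Q)$ from both poles, so the Caccioppoli/slice argument applies uniformly in $s$; conversely, ``brute force using \eqref{G-est3}'' cannot handle the gradient in any regime, because with merely $L^\infty$ coefficients there is no pointwise bound on $\nabla E$, so Caccioppoli is unavoidable everywhere. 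Second, \eqref{G-est3} as stated only controls increments in the $x$ (or $y$) variable, whereas your increment moves both $x$ and $t$; write $u=[E(x,t,\cdot,\cdot)-E(x',t,\cdot,\cdot)]+[E(x',t,\cdot,\cdot)-E(x',t',\cdot,\cdot)]$, bound the first bracket by \eqref{G-est3} with $m=0$, and bound the second by $|t-t'|\sup|\partial_\tau E|\lesssim \ell(Q)\,(2^k\ell(Q))^{-n}$ using \eqref{G-est2} and $t$-independence, which is dominated by the target $2^{-\alpha k}(2^k\ell(Q))^{1-n}$ since $\alpha\le 1$. With these adjustments the proof is complete.
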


We will also need the following lemma from \cite{AAAHK} to deduce \eqref{LP-1} from \eqref{LP-2} for $p\geq2$.

\begin{lemma}[{\cite[Lemma 4.8]{AAAHK}}]\label{l4.8*} Suppose that 
$L$ and $L^*$ satisfy the standard assumptions.
Let $\mathcal{S}_t$ denote the operator $f\mapsto \mathcal{S}_L f(\cdot,t)$.  
Then for $1<p<\infty$, 
\[
\|\widetilde{N}_*\left(\nabla \mathcal{S}_L f\right)\|_{L^p(\RR^n)}\,\leq\, C_p 
\left(1+\sup_{t>0}\|\nabla \mathcal{S}_t\|_{p\to p}\right)\|f\|_{L^p(\RR^n)}\,,
\]
where $\|\cdot\|_{p\to p}$ denotes the operator norm in $L^p$. The analogous bound holds for $L^*$ and in the lower half-space.
\end{lemma}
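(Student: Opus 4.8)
The plan is to deduce the non-tangential maximal estimate from the hypothesized uniform slice bound $M:=\sup_{t>0}\|\nabla\cS_t\|_{p\to p}$ by exploiting the single structural fact provided by $t$-independence: since $\partial_t$ commutes with $L$, the function $w:=\partial_t\cS_Lf$ is itself a null-solution of $L$ in $\reu$, and hence inherits the interior Caccioppoli and De Giorgi/Nash/Moser estimates \eqref{eq2.DGN}--\eqref{eq2.Mr}. Accordingly I would split $\nabla\cS_Lf=(\nabla_x\cS_Lf,\partial_t\cS_Lf)$ and treat the two pieces differently: the vertical part $\partial_t\cS_Lf$ is the ``good'' part (it is a solution), while the horizontal part $\nabla_x\cS_Lf$ is the ``bad'' part, since it is not a solution and its kernel $\vec H_t(x,y)=\nabla_xE(x,t,y,0)$ is, as noted above, not a standard Calder\'on--Zygmund kernel.

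For the vertical part: since $Lw=0$, Moser's estimate \eqref{eq2.Mr}, applied on the Whitney balls $W(x,t)=\{|(x,t)-(y,s)|<t/4\}$ and their mild dilates, shows that the $L^2$-Whitney average defining $\widetilde{N}_*(w)$ is comparable, up to dimensional constants and a harmless change of aperture, to the pointwise non-tangential maximal function $N_*(w)$. To bound $\|N_*(w)\|_p$ I would use that the kernel $K_t(x,y)=\partial_tE(x,t,y,0)$ of $f\mapsto w(\cdot,t)$ is a standard Calder\'on--Zygmund kernel uniformly in $t$ and, being a $t$-derivative of the solution $E(\cdot,\cdot,y,0)$, is also H\"older continuous in $t$; a standard Cotlar-type maximal-truncation/maximal-average argument for this nice family then yields $\|N_*(\partial_t\cS_Lf)\|_p\lesssim \big(1+\sup_{t>0}\|\partial_t\cS_t\|_{p\to p}\big)\|f\|_p\le(1+M)\|f\|_p$ for all $1<p<\infty$, via the $L^p$-boundedness of the Hardy--Littlewood maximal operator $\mathcal{M}$. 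The additive $\mathcal{M}f$ appearing here is the source of the constant $1$ in the statement.

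For the horizontal part: because $\nabla_x\cS_Lf$ is neither a solution nor governed by a Calder\'on--Zygmund kernel, I would not estimate it directly but compare its values across nearby heights. For $t>0$ and $r\in[t,2t]$, write $\nabla_x\cS_Lf(y,t)-\nabla_x\cS_Lf(y,2t)=-\int_t^{2t}\nabla_x\partial_r\cS_Lf(y,r)\,dr$; since $\partial_r\cS_Lf$ is a solution, Caccioppoli followed by Moser bounds $|\nabla_x\partial_r\cS_Lf(y,r)|$ by $r^{-1}$ times an $L^2$-Whitney average of $\partial_s\cS_Lf$, hence by $r^{-1}\widetilde{N}_*(\partial_t\cS_Lf)(y)$ when $(y,r)$ lies in a cone over $y$. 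Integrating in $r$ and telescoping over dyadic heights then reduces the $\widetilde{N}_*$-norm of $\nabla_x\cS_Lf$ to that of $\partial_t\cS_Lf$, already controlled, together with a slice contribution which — after the telescoping leaves one with averages over horizontal slices at a single height comparable to the cone parameter — is estimated by the Hardy--Littlewood maximal theorem and Fubini against $\sup_{t>0}\|\nabla_x\cS_Lf(\cdot,t)\|_p\le M\|f\|_p$.

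The main obstacle is precisely this horizontal part: one must move the estimate across heights ``by hand,'' and the telescoping/dyadic bookkeeping has to be arranged so that the accumulated constants remain bounded — which is exactly where $t$-independence enters essentially (through the fact that $\partial_t\cS_Lf$ is a solution), and where the one-sided/annular decay for the kernel of $\nabla_x\cS$ in the spirit of Lemma~\ref{Lemma1} is the natural tool for controlling the far interactions. Once the upper half-space case is settled, the lower half-space and the operator $L^*$ follow by the evident reflection, as asserted.
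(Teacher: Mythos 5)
The paper does not prove this lemma --- it is quoted from \cite[Lemma 4.8]{AAAHK} --- so I am judging your argument on its own terms; the closest in-paper model for the correct proof is the Kenig--Pipher style computation in Part 1(b) of the proof of Theorem \ref{P-bdd1}. Your architecture (split into $\partial_t\cS f$, which is a solution with a standard Calder\'on--Zygmund kernel, and $\nabla_x\cS f$, which must be compared across heights using $\partial_t$) is the right one, and your treatment of the vertical part is essentially correct. But two steps in the horizontal part fail. First, the claim that ``Caccioppoli followed by Moser bounds $|\nabla_x\partial_r\cS_Lf(y,r)|$'' pointwise is false: Caccioppoli controls only the $L^2$ Whitney \emph{average} of $\nabla(\partial_r\cS f)$, and Moser's estimate \eqref{eq2.M} applies to solutions, not to their gradients --- for $L^\infty$ coefficients, even with DG/N/M, gradients of solutions need not be locally bounded. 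So the pointwise fundamental-theorem-of-calculus telescoping cannot be run as written; it must be carried out at the level of $L^2$ averages (Minkowski's inequality plus Caccioppoli, or the slice estimate of Proposition \ref{propslice}), which is compatible with the definition of $\widetilde{N}_*$ but is a different computation from the one you describe.

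The second, more serious gap is the residual slice term. Your telescoping terminates at a slice whose height is ``comparable to the cone parameter,'' i.e.\ at a height $\sigma\approx t$ that \emph{varies with the cone point} $(x,t)$. Taking the supremum over the cone then places $\sup_t$ inside the $L^p$ norm, and $\big\|\sup_{t}\big(M(|\nabla_x\cS_tf|^2)\big)^{1/2}\big\|_p$ is \emph{not} controlled by $\sup_t\|\nabla\cS_t\|_{p\to p}\|f\|_p$ via ``Hardy--Littlewood and Fubini'': converting a uniform-in-$t$ family of slice bounds into a bound on a maximal function is exactly the content of the lemma, so this step is circular. The repair is to telescope all the way down to the single fixed boundary slice $s=0$: Lemma \ref{l4.1} provides the trace $\nabla_x Sf$ with $\|\nabla_xSf\|_p\le\sup_t\|\nabla\cS_t\|_{p\to p}\|f\|_p$ (by weak limits), the difference $u(y,s)-u(y,0)$ is controlled by the vertical maximal function $\sup_\tau|\partial_\tau\cS_\tau f(y)|$ (bounded on $L^p$ for all $1<p<\infty$ by Cotlar, as in \eqref{eq4.lpnt} --- this is where the additive ``$1$'' comes from), and Sobolev--Poincar\'e is applied on the fixed slice $s=0$, yielding $\widetilde{N}_*(\nabla\cS f)\lesssim (M(V^q))^{1/q}+M(|\nabla_xSf|)$ pointwise. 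Even then one must lower the $L^2$ averages to $L^q$ averages with $q<\min\{p,n/(n-1)\}$ (using Moser on the solution $u-c$ before Sobolev--Poincar\'e); an $L^2$ average composed with $M$ would only cover $p>2$. Finally, Lemma \ref{Lemma1} plays no role here: no near/far splitting of $f$ is needed in the range $1<p<\infty$.
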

We are now ready to present the proof of Theorem~\ref{P-bdd1}.

\begin{proof}[Proof of Theorem~\ref{P-bdd1}]  As noted above, we work in $\reu$ and restrict our attention to the layer potentials for $L$, as the proofs in $\mathbb{R}^{n+1}_-$ and for $L^*$ are essentially the same. We first consider estimates \eqref{LP-2}-\eqref{LP-3}, and we separate their proofs into two parts, according to whether $p\leq 2$ or $p>2$.  Afterwards, we prove estimates \eqref{LP-4}-\eqref{LP-6}.

\smallskip

\noindent{\bf Part 1:  estimates \eqref{LP-2}-\eqref{LP-3} in the case} $\p<p\leq 2$.
We set $\mathcal{S} := \mathcal{S}_L^+$ to simplify notation. We separate the proof into the following three parts.

\smallskip

\noindent{\bf Part 1(a):  estimate \eqref{LP-2} in the case} $\p<p\leq 2$. Consider first the case $p\leq 1$.
We claim that if $\frac{n}{n+1}<p\leq 1$ and $a$ is an $H^p$-atom 
in $\RR^n$ with 
\begin{equation}\label{atom-X}
\supp a\subset Q,\quad\int_{\RR^n}a\,dx=0,\quad
\|a\|_{L^2(\RR^n)}\leq\ell(Q)^{n\bigl(\frac{1}{2}-\frac{1}{p}\bigr)},
\end{equation}
then for $\alpha>0$ as in \eqref{G-est3}, and for each integer $k\geq 4$, we have
\begin{equation}\label{eqSt-9}
\sup_{t\geq0}\int_{2^{k+1}Q\setminus 2^kQ}|\nabla{\mathcal{S}}a(x,t)|^2\,dx
\leq C2^{-(2\alpha+n)k}\ell(Q)^{n\bigl(1-\frac{2}{p}\bigr)}\,,
\end{equation}
where $\nabla\mathcal{S}a(\cdot,0)$ is defined on $2^{k+1}Q\setminus 2^kQ$, since $\supp a\subset Q$. Indeed, using the vanishing moment
condition of the atom, Minkowski's 
inequality, and Lemma~\ref{Lemma1} (with the roles of $(x,t)$ and $(y,s)$, or equivalently, the roles
of $L$ and $L^*$, reversed), we obtain
\begin{align*}\begin{split}
\int_{2^{k+1}Q\setminus 2^kQ}|&\nabla{\mathcal{S}}a(x,t)|^2\,dx
\\[4pt]
&=\int_{2^{k+1}Q\setminus 2^kQ}\Bigl|\int_{\RR^n}
\Bigl[\nabla_{x,t}E(x,t,y,0)-\nabla_{x,t}E(x,t,y_Q,0)\Bigr]\,a(y)\,dy
\Bigr|^2\,dx
\\[4pt]
&\leq \left(\int_{\RR^n}|a(y)|\left(\int_{2^{k+1}Q\setminus 2^kQ}
\Bigl|\nabla_{x,t}E(x,t,y,0)-\nabla_{x,t}E(x,t,y_Q,0)\Bigr|^2\,dx\right)^{1/2}
dy\right)^{2}
\\[4pt]
&\leq C2^{-2\alpha k}\Bigl(2^{k}\ell(Q)\Bigr)^{-n}\|a\|_{L^1(\RR^n)}^2
\leq C2^{-(2\alpha+n)k}\ell(Q)^{n\bigl(1-\frac{2}{p}\bigr)},
\end{split}\end{align*}
since
$\|a\|_{L^1(\RR^n)}\leq |Q|^{1/2}\|a\|_{L^2(\RR^n)}$.  This proves ~\eqref{eqSt-9} and thus establishes the claim.

With \eqref{eqSt-9} in hand, we can now prove \eqref{LP-2} by a standard argument. We write
\[
\int_{\RR^n}|\nabla \mathcal{S}a(x,t)|^p\,dx=\int_{16 Q}|\nabla \mathcal{S} a(x,t)|^p\,dx
+\sum_{k=4}^\infty\int_{2^{k+1}Q\setminus 2^kQ}|\nabla \mathcal{S}a(x,t)|^p\,dx,
\]
where $a$ is an $H^p$-atom supported in $Q$ as in \eqref{atom-X}. Applying H\"{o}lder's 
inequality with exponent $2/p$, the $L^2$ estimate for $\nabla \mathcal{S}$ in~\eqref{eq4.prop46}, and estimate \eqref{eqSt-9} for $t>0$, we obtain 
\[
\sup_{t>0}\int_{\RR^n}|\nabla \mathcal{S}a(x,t)|^p\,dx\leq C,
\]
since $\alpha>n(1/p-1)$ in the interval $\p<p\leq1$ with $\p:=n/(n+\alpha)$. This proves (\ref{LP-2}) for $\p<p\leq1$, and so interpolation with \eqref{eq4.prop46} proves (\ref{LP-2}) for $\p<p\leq2$.

\smallskip

\noindent{\bf Part 1(b):  estimate \eqref{LP-1} in the case} $\p<p\leq 2$. We first note that as in Part 1(a), by using \eqref{eq4.9} instead of \eqref{eq4.prop46}, we may reduce matters to showing that for $\p<p\leq 1$ and for each integer $k\geq 10$, we have
\[
\int_{2^{k+1}Q\setminus 2^k Q}
|\widetilde{N}_*(\nabla \mathcal{S} a)|^p\leq C 2^{-(\alpha-n(1/p-1)) kp},
\]
whenever $a$ is an $H^p$-atom supported in $Q$ as in \eqref{atom-X}, since $\alpha >n(1/p-1)$ in the interval $\p<p\leq 1$ with $\p:=n/(n+\alpha)$. In turn, using H\"{o}lder's inequality with exponent $1/p$ when $p<1$,
we need only prove that for each integer $k\geq 10$, we have
\begin{equation}\label{ms1}
\int_{2^{k+1}Q\setminus 2^k Q}
|\widetilde{N}_*(\nabla \mathcal{S} a)|\leq C 2^{-\alpha k}|Q|^{1-1/p}.
\end{equation}

To this end, set $u:={\mathcal{S}}a$, and suppose that $x\in 2^{k+1}Q\setminus 2^k Q$ for some integer $k\geq 10$. We begin with the estimate
$\widetilde{N}_*\leq N_1+N_2$, where 
\begin{align*}
N_1 (\nabla u)(x)&:=\sup_{|x-y|<t<2^{k-3}\ell(Q)}
\left(\fint_{B((y,t),t/4)}|\nabla u|^2\right)^{1/2},\\[4pt]
N_2 (\nabla u)(x)&:=\sup_{|x-y|<t,\,\,t>2^{k-3}\ell(Q)}
\left(\fint_{B((y,t),t/4)} |\nabla u|^2\right)^{1/2}.
\end{align*}
Following \cite{KP}, by Caccioppoli's inequality we have
\begin{align*}\begin{split}
N_1 (\nabla u)(x) &\leq C\sup_{|x-y|< t< 2^{k-3}\ell(Q)}
\left(\fint_{B((y,t),t/2)} \frac{|u-c_B|^2}{t^2}\right)^{1/2}
\\[4pt]
&\leq C \sup_{t<2^{k-3}\ell(Q)}
\left\{\left(\fint_{t/2}^{3t/2}
\fint_{|x-y|<3t/2}\frac{|u(y,s)-u(y,0)|^2}{t^2}\right)^{1/2}\right.
\\[4pt]
& \qquad\qquad\qquad\qquad+\,\,\left.\left(\fint_{|x-y|<3t/2}\frac{|u(y,0)-c_B|^2}{t^2}
\right)^{1/2}\,\right\}
=:I+II,
\end{split}\end{align*}
where the constant $c_B$ is at our disposal, and $u(y,0):=\mathcal{S}a(y,0):=S\!a(y)$. 

By the vanishing moment property of $a$, if $z_Q$ denotes the center of $Q$, then for all $(y,s)$ and $t$ as in $I$, we have
\begin{align*}\begin{split}
\frac{1}{t}|u(y,s)-u(y,0)| &=
\left|\frac{1}{t}\int_0^s\frac{\partial}{\partial\tau}
{\mathcal{S}} a(y,\tau)\,d\tau\right|
\\[4pt]
&\leq \sup_{0<\tau<3t/2}\int_{{\RR}^n}|\partial_\tau E(y,\tau,z,0)
-\partial_\tau E(y,\tau,z_Q,0)||a(z)|\,dz
\\[4pt]
&\leq C\int_{\RR^n}\frac{\ell(Q)^\alpha}{|y-z_Q|^{n+\alpha}}|a(z)|\,dz
\\[4pt]
&\leq C 2^{-\alpha k}(2^k\ell(Q))^{-n}|Q|^{1-1/p},
\end{split}\end{align*}
where in the next-to-last step we have used \eqref{G-est3} with $m=1$, and in the last step we have used that $\|a\|_1 \leq C|Q|^{1-1/p}.$ Thus,
\[
\int_{2^{k+1}Q\setminus 2^kQ} I\,dx\leq 2^{-\alpha k}|Q|^{1-1/p},
\]
as desired. By Sobolev's inequality, for an appropriate 
choice of $c_B$, we have
\begin{align*}\begin{split}
II&\leq C\sup_{0<t<2^{k-3}\ell(Q)}
\left(\fint_{|x-y|<3t/2}|\nabla_{\rm{tan}}u(y,0)|^{2_*}\right)
^{1/2_\ast}
\\[4pt]
&\leq C \left(M\big(|\nabla_{\rm{tan}}u(\cdot,0)|^{2_\ast}
\chi_{2^{k+3}Q\setminus 2^{k-2}Q}\big)(x)\right)^{1/2_*}\,,
\end{split}\end{align*}
where $\nabla_{\rm tan} u(x,0) := \nabla_x u(x,0)$ is the tangential gradient,  
$2_*:= 2n/(n+2)$, and $M$, as usual, denotes the Hardy-Littlewood maximal operator.
Consequently, we have
\begin{align*}\begin{split}
\int_{2^{k+1}Q\setminus 2^kQ}II
&\leq C\left(2^k\ell(Q)\right)^{n/2}
\left(\int_{2^{k+3}Q\setminus 
2^{k-2}Q}|\nabla_{\rm{tan}}u(\cdot,0)|^2\right)^{1/2} \\[4pt]
&\leq C \left(2^k\ell(Q)\right)^{n/2}
2^{-(\alpha+n/2)k}\ell(Q)^{n(1/2-1/p)}
\leq C2^{-\alpha k}|Q|^{1-1/p},
\end{split}\end{align*}
where in the second inequality we used estimate \eqref{eqSt-9} for $t=0$, since $u=\mathcal{S}a$ and ${\supp a \subset Q}$. We have therefore proved
that $N_1$ satisfies \eqref{ms1}. 

It remains to treat $N_2$.  For each $x\in 2^{k+1}Q\setminus 2^kQ$, choose $(y_\ast$, $t_\ast)$ in the cone $\Gamma(x)\subset\RR^{n+1}_+$ so that the
supremum in the definition of $N_2$ is essentially attained, 
i.e., so that
\[
N_2(\nabla u)(x)\leq 2\left(\fint_{B((y_\ast,t_\ast),\,t_\ast/4)}
|\nabla u|^2\right)^{1/2},
\]
with $|x-y_\ast|<t_*$ and $t_\ast\geq 2^{k-3}\ell(Q)$. By Caccioppoli's inequality,
\[
N_2(\nabla u)(x)\leq C\frac{1}{t_\ast}\left(
\fint_{B((y_\ast,t_\ast),\,t_\ast/2)}|u|^2\right)^{1/2}.
\]
Now for $(y,s)\in B((y_\ast,t_\ast),t_\ast/2)$, by \eqref{G-est3} with $m=0$, we have 
\begin{align*}\begin{split}
|u(y,s)|&\leq \int_{\RR^n}|E(y,s,z,0)-E(y,s,z_Q,0||a(z)|\,dz
\\[4pt]
&\leq C\|a\|_{L^1(\RR^n)}\frac{\ell(Q)^\alpha}{s^{n-1+\alpha}}
\leq C\ell(Q)^\alpha t_\ast^{1-n-\alpha}|Q|^{1-1/p}.
\end{split}\end{align*}
Therefore, 
\[
N_2(\nabla u)(x)\leq C\ell(Q)^\alpha t_\ast^{-n-\alpha}|Q|^{1-1/p}
\leq C 2^{-\alpha k}(2^k\ell(Q))^{-n}|Q|^{1-1/p}.
\]
Integrating over $2^{k+1}Q\setminus 2^kQ$, we obtain \eqref{ms1} for $N_2$, hence (\ref{LP-1}) holds for
$\p<p\leq2$.

\smallskip

\noindent{\bf Part 1(c):  estimates \eqref{LP-3a}-\eqref{LP-3} in the case} $\p<p\leq 2$. We note that the case $p=2$ holds by \eqref{eq4.prop46} and Lemma \ref{l4.ntjump} (i) and (ii).  Thus, by interpolation, it is again enough to treat the case $\p<p\leq1$, and in that setting, \eqref{LP-3a}-(\ref{LP-3}) are an immediate consequence of the more general estimates in~\eqref{eq4.31**}-\eqref{eq4.31*} below, which we note for future reference.

\begin{proposition}\label{r4.1} 
Suppose that $L$ and $L^*$ satisfy the standard assumptions, let $\alpha$ denote the minimum of the De Giorgi/Nash exponents for $L$ and $L^*$ in \eqref{eq2.DGN}, and set ${\p:=n/(n+\alpha)}$. Then
\begin{align}\label{eq4.31**}
\|\nabla_x\, S\!f\|_{H^p(\RR^n,\CC^n)} + \sup_{t> 0}\|\nabla_x\, \mathcal{S} f(\cdot, t)\|_{H^p(\RR^n,\CC^n)}&\leq C_p 
\|f\|_{H^p(\RR^n)},\quad\forall\,p\in(\p,1]\,,\\[4pt]
\label{eq4.31*}
\|\partial_\nu\mathcal{S}f(\cdot, 0)\|_{H^p(\RR^n)} + \sup_{t>0}\|\langle A\nabla{\mathcal{S}} f(\cdot, t),e_{n+1}\rangle\|_{H^p(\RR^n)} &\leq C_p 
\|f\|_{H^p(\RR^n)},\quad\forall\,p\in(\p,1]\,,
\end{align}
where $\partial_\nu\mathcal{S}f(\cdot, 0) = ((1/2) I+\widetilde{K})f$ is defined in the variational sense of Lemma~\ref{l4.1}. The analogous results hold for $L^*$ and in the lower half-space.
\end{proposition}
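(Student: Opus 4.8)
\noindent \emph{Proof sketch.} The plan is to reduce everything to the action on a single $H^p$-atom and then to recognize the output as an $H^p$-molecule. By the $p$-subadditivity of $\|\cdot\|_{H^p}^p$, the atomic decomposition of $H^p(\RR^n)$, and the $L^p$ bounds \eqref{LP-2}, \eqref{LP-1} already proved in Parts~1(a)--(b) (which guarantee that, for $f=\sum_j\lambda_j a_j$, each quantity in the statement applied to $f$ equals the sum of the corresponding series, convergent in $L^p$), it suffices to bound
\[
\nabla_x S\!a,\qquad \nabla_x\mathcal{S}a(\cdot,t),\qquad \partial_\nu\mathcal{S}a(\cdot,0),\qquad \langle A\nabla\mathcal{S}a(\cdot,t),e_{n+1}\rangle
\]
in $H^p(\RR^n)$, uniformly in $t>0$ and over all atoms $a$ as in \eqref{atom-X}, say supported in a cube $Q$ with centre $z_Q$. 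I would use the classical fact (see, e.g., \cite{St2}) that, since $n(1/p-1)<1$ for $\p<p\le1$, a function $F\in L^2(\RR^n)$ lies in $H^p$ with $\|F\|_{H^p}\le C$ provided $\int_{\RR^n}F=0$ and, for some $\delta>0$, the size bounds $\|F\|_{L^2(16Q)}\le C\,\ell(Q)^{n(1/2-1/p)}$ and $\|F\|_{L^2(2^{k+1}Q\setminus 2^kQ)}\le C\,2^{-\delta k}\bigl(2^k\ell(Q)\bigr)^{n(1/2-1/p)}$ hold for all $k\ge4$; that is, $F$ is a bounded multiple of a $(p,2)$-molecule adapted to $Q$. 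So the problem reduces to checking these size and cancellation conditions in the four cases.

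The size conditions are immediate from results already in hand. The annular $L^2$ bound follows, in all four cases, from \eqref{eqSt-9} (which holds for every $t\ge0$), with admissible exponent $\delta=\alpha-n(1/p-1)$, which is positive exactly because $p>\p=n/(n+\alpha)$; the bound over $16Q$ follows from the $L^2$ bound \eqref{eq4.prop46} together with $\|a\|_2\le\ell(Q)^{n(1/2-1/p)}$. For $\langle A\nabla\mathcal{S}a(\cdot,t),e_{n+1}\rangle=\sum_{j=1}^{n+1}A_{n+1,j}\,\partial_{x_j}\mathcal{S}a(\cdot,t)$ these bounds persist since $A\in L^\infty$, and for $\partial_\nu\mathcal{S}a(\cdot,0)$---which, by Lemma~\ref{l4.1}(ii), is the weak $L^2$ limit as $t\to0^+$ of $-\langle A\nabla\mathcal{S}a(\cdot,t),e_{n+1}\rangle$---they pass to the limit by lower semicontinuity of the $L^2$ norm on each fixed set. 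In particular every one of these functions lies in $L^1(\RR^n)$ with norm $\lesssim\ell(Q)^{n(1-1/p)}$, uniformly in $t$.

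It remains to verify the cancellation $\int_{\RR^n}F=0$, and this is where the real work lies. For $F=\nabla_x\mathcal{S}a(\cdot,t)$ (with $t=0$ giving $\nabla_xS\!a$) it is easy: the vanishing moment of $a$, together with \eqref{G-est1} and the case $m=0$ of \eqref{G-est3}, gives $|\mathcal{S}a(x,t)|\lesssim\|a\|_{L^1}\,\ell(Q)^\alpha(|x-z_Q|+t)^{-(n-1+\alpha)}$ for $|x-z_Q|$ large, so $x\mapsto\mathcal{S}a(x,t)$ decays faster than $|x|^{-(n-1)}$ and hence $\int_{\RR^n}\partial_{x_j}\mathcal{S}a(x,t)\,dx=0$ for each $j$, by integrating the derivative over expanding balls. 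The cases $F=\partial_\nu\mathcal{S}a(\cdot,0)$ and $F=\langle A\nabla\mathcal{S}a(\cdot,t),e_{n+1}\rangle$ are the main obstacle, precisely because the rough coefficients $A_{n+1,j}$ prevent one from reading this cancellation off from $\int\nabla\mathcal{S}a=0$. Here I would argue through the equation: set $u:=\mathcal{S}^+a$ and $w(t):=\int_{\RR^n}\langle A\nabla u(x,t),e_{n+1}\rangle\,dx$ for $t>0$ (finite by the uniform $L^1$ bound above). Since $\dv(A\nabla u)=0$ in $\reu$ we have $\partial_t\langle A\nabla u,e_{n+1}\rangle=-\dv_x\bigl((A\nabla u)'\bigr)$, where $(A\nabla u)'$ denotes the first $n$ components; differentiating $t\mapsto\int_{\RR^n}\langle A\nabla u(x,t),e_{n+1}\rangle\,\psi(x/R)\,dx$ in $t$ (with $\psi\in C_0^\infty(\RR^n)$ a fixed cutoff, $\psi\equiv1$ near $0$) and integrating by parts in $x$, the resulting term is $O\bigl(R^{-1}\!\int_{R<|x|<2R}|\nabla u(x,t)|\,dx\bigr)$, which by \eqref{eqSt-9} tends to $0$ as $R\to\infty$, uniformly for $t$ in compact subsets of $(0,\infty)$; letting $R\to\infty$ shows $w$ is constant on $(0,\infty)$. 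On the other hand, for large $t$, interior Caccioppoli on balls $B((x,t),t/8)\subset\reu$, combined with the decay $|\mathcal{S}a(y,s)|\lesssim\|a\|_{L^1}\,\ell(Q)^\alpha t^{-(n-1+\alpha)}$ on $B((x,t),t/4)$ when $|x-z_Q|\lesssim t$ (again from \eqref{G-est3}) and with \eqref{eqSt-9} on $\{|x-z_Q|\gtrsim t\}$, gives by Cauchy--Schwarz that $|w(s_j)|\to0$ along some sequence $s_j\to\infty$; hence $w\equiv0$. Consequently $\int_{\RR^n}\langle A\nabla\mathcal{S}a(x,t),e_{n+1}\rangle\,dx=0$ for every $t>0$, and letting $t\to0^+$ in Lemma~\ref{l4.1}(ii)---using the uniform $L^1$ tail decay to upgrade the weak $L^2$ convergence on compact sets---also $\int_{\RR^n}\partial_\nu\mathcal{S}a(x,0)\,dx=0$. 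This verifies the molecular conditions in all four cases, proving the proposition; the corresponding statements for $L^*$ and for $\RR^{n+1}_-$ follow in the same way. Making the two limiting steps ($w$ constant, $w\to0$ at infinity) rigorous---via the interior estimates \eqref{eq2.M} and the bounds \eqref{G-est1}, \eqref{G-est3}, \eqref{eqSt-9}---is the only substantial point; the reduction to atoms, the size estimates, and the tangential cancellation are routine.
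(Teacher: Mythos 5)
Your proposal is correct, and the overall skeleton (reduce to a single atom, verify the size and cancellation conditions for an $H^p$-molecule, get the size bounds from \eqref{eq4.prop46} and \eqref{eqSt-9} with $\varepsilon=\alpha-n(1/p-1)>0$, obtain the $t=0$ cases by passing to the weak $L^2$ limit, and handle the tangential cancellation via the vanishing moment of $a$ and \eqref{G-est3}) coincides with the paper's. The genuine difference is in the key step, the cancellation of $\partial_\nu\mathcal{S}a(\cdot,0)$ and of $\langle A\nabla\mathcal{S}a(\cdot,t),e_{n+1}\rangle$. The paper pairs against a cutoff $\Phi_R$ over the whole half-space, using the variational identity of Lemma~\ref{l4.1}(i) (and, for $t>0$, its analogue \cite[(4.6)]{AAAHK}), and kills the limit by H\"older with $q=p(n+1)/n$ together with $\nabla u\in L^{q}(\reu)$, which comes from Lemma~\ref{L-improve} and the already-proven estimate \eqref{LP-1}. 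You instead work entirely on horizontal slices: you show the flux $w(t)=\int_{\rn}\langle A\nabla u(\cdot,t),e_{n+1}\rangle$ is independent of $t$ (testing the weak equation against $\psi(x/R)\chi(t)$ and using the annular $L^1$ decay from \eqref{eqSt-9} to remove the cutoff) and that it vanishes as $t\to\infty$ (slice Caccioppoli as in Proposition~\ref{propslice} plus the pointwise decay of $\mathcal{S}a$ near the axis, and \eqref{eqSt-9} far away), then let $t\to0^+$ using the uniform $L^1$ tails. Both arguments are sound; yours buys independence from Lemma~\ref{L-improve} and from the global $L^{p(n+1)/n}(\reu)$ integrability of $\nabla u$ (i.e., this step no longer leans on Part 1(b)), at the price of having to justify the $t$-differentiation and the constancy of the flux, which, as you note, is the one point requiring care (it is legitimate here because $t$-independence makes $u$ smooth in $t$ and the weak formulation can be tested with product cutoffs). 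One cosmetic remark: your slice estimates in fact give $\|\nabla u(\cdot,t)\|_{L^1(\rn)}\lesssim\ell(Q)^{n(1-1/p)}(\ell(Q)/t)^{\alpha}$ for all large $t$, so $w(t)\to0$ outright and no subsequence is needed.
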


\begin{proof}
It suffices to show that if $a$ is an $H^p(\RR^n)$-atom as in (\ref{atom-X}), and $t>0$, then 
\begin{align*}
&\vec{m}_0:= C\nabla_x S\!a,
&&\vec{m}_t:= C\nabla_x \cS a(\cdot,t),\\[4pt]
&m_0:=C((1/2)I+\widetilde{K})a,
&&m_t:=C\langle A\nabla{\mathcal{S}}a(\cdot, t),e_{n+1}\rangle,
\end{align*}
are all molecules adapted to $Q$, for some harmless constant $C\in(0,\infty)$, 
depending only on the ``standard constants". Recall that, for $n/(n+1)<p\leq 1$, 
an $H^p$-molecule adapted to a cube $Q\subset\RR^n$ 
is a function $m\in L^1(\RR^n)\cap L^2(\RR^n)$ satisfying 
\begin{equation}\label{eqSt-10}
\begin{array}{l}
(i)\quad \int_{\RR^n}m(x)\,dx=0,
\\[8pt]
(ii)\quad \Bigl(\int_{16\,Q}|m(x)|^2\,dx\Bigr)^{1/2}
\leq\ell(Q)^{n\bigl(\frac{1}{2}-\frac{1}{p}\bigr)},
\\[8pt]
(iii)\quad\Bigl(\int_{2^{k+1}Q\setminus 2^kQ}|m(x)|^2\,dx\Bigr)^{1/2}
\leq 2^{-\varepsilon k}\Bigl(2^k\ell(Q)\Bigr)
^{n\bigl(\frac{1}{2}-\frac{1}{p}\bigr)},\qquad\forall\,k\geq 4,
\end{array}
\end{equation}
for some $\varepsilon>0$ (see, e.g., \cite{CW}, \cite{TW}).

Note that for $\vec{m}_t$ and $m_t$, when $t>0$, property (ii) follows from the $L^2$ estimate in \eqref{eq4.prop46}, and (iii) follows from \eqref{eqSt-9} with $\varepsilon:=\alpha-n(1/p-1),$ which is positive for $\p<p\leq 1$ with $\p:=n/(n+\alpha)$. Moreover, these estimates for $\vec{m}_t$ and $m_t$ hold uniformly in $t$, and since $a\in L^2(\RR^n)$, we obtain (ii) and (iii) for $\vec{m}_0$ and $m_0$ by Lemma \ref{l4.ntjump}.

Thus, it remains to show that $\vec{m}_t$ and $m_t$ have mean-value zero for all $t\geq0$. 
This is nearly trivial for $\vec{m}_t$. For any $R>1$, choose $\Phi_R \in C^\infty_0(\RR^{n+1})$, with $0\leq \Phi_R\leq 1$, 
such that
\begin{equation}\label{Phi-R}
\Phi_R\equiv 1\mbox{ on }B(0,R),
\quad \supp \Phi_R\subset B(0,2R),\quad
\|\nabla\Phi_R\|_{L^\infty(\RR^n)}\leq C/R\,,
\end{equation}
and let $\phi_R:=\Phi_R(\cdot,0)$ denote its restriction to $\rn\times\{0\}$.
For $1\leq j\leq n$ and $R > C(\ell(Q)+|y_Q|)$ (where $y_Q$ is the center of $Q$), using that $a$ has mean value
zero, we have
\begin{align*}
\left|\int_{\rn} \partial_{x_j} \cS a (\cdot,t) \,\phi_R \right| &= \left|\int_{\rn} \cS a(\cdot,t)\, \partial_{x_j} \phi_R \right| \\[4pt]
&\lesssim \,\frac1R \int_{R\leq|x|\leq 2R} \int_{Q} |E(x,t,y,0 -E(x,t,y_Q,0)| \, |a(y)| \, dy\, dx\\[4pt]
&\lesssim\, \frac1R \int_{R\leq|x|\leq 2R} \int_{Q} \frac{\ell(Q)^\alpha}{R^{n-1+\alpha}}\, |a(y)|\, dy\\[4pt]
&\lesssim \, \left(\frac{\ell(Q)}{R}\right)^\alpha \|a\|_{L^1(\rn)} \,\lesssim\,
\left(\frac{\ell(Q)}{R}\right)^\alpha \ell(Q)^{n(1-1/p)}\,,
\end{align*}
where we used the DG/N bound \eqref{G-est3} with $m=0$, the Cauchy-Schwarz inequality
and the definition of an atom \eqref{atom-X}.  Letting $R\to \infty$, we obtain $\int_{\rn} \nabla_x \cS a (\cdot,t) = 0$ for all~$t\geq0$.

Next, let us show that $((1/2) I+\widetilde{K})a$ has mean-value zero.
Set $u:={\mathcal{S}}a$ in $\RR^{n+1}_+$, so that matters are reduced to proving that 
\[
\int_{\RR^n} \partial_\nu u(x, 0) \,dx=0,
\]
where $\partial_\nu u(\cdot,0)$ is defined in the variational sense of Lemma~\ref{l4.1}. 
Choose $\Phi_R,\phi_R$ as above, and note that $\partial_\nu u(\cdot,0) \in L^1(\rn)$, by the bounds \eqref{eqSt-10} (ii) and (iii) that we have just established. Then by Lemma~\ref{l4.1}~(i), we have
\begin{align*}\begin{split}
\left|\int_{\RR^n}\partial_\nu u(\cdot,0)\,dx\right|  \,&=\,  
\left|\lim_{R\to\infty}\int_{\RR^n}\partial_\nu u(\cdot,0)\,\phi_R\,dx\right| 
\\[4pt]
 &= \, \left|\lim_{R\to\infty}\int_{\RR^{n+1}_+}\langle A\nabla u,\nabla\Phi_R
\rangle\,dX\right| 
\\[4pt]  &\lesssim \, \overline{\lim_{R\to\infty}}
\left(\int_{X\in{\RR^{n+1}_+}:R<|X|<2R}\,\,|\nabla u|^{q}\,dX\right)
^{1/q}\left(\int_{R<|X|<2R}\,\,|\nabla\Phi_R|^{q'}\,dX\right)
^{1/q'},
\end{split}\end{align*}
where $q:= p(n+1)/n$ and $q' = q/(q-1).$  Since $0<\alpha \leq 1$ and $\p:=n/(n+\alpha)$, we have
 $n/(n+1) <p\leq 1$, hence $1<q\leq (n+1)/n$ and
$n+1\leq q'<\infty.$ Consequently, 
the second factor above is bounded uniformly in $R$ as $R\to \infty$, whilst the first factor converges to zero by Lemma~\ref{L-improve} and the dominated convergence theorem, since we have already proven
\eqref{LP-1} in the case $\p<p\leq 2$. This proves that $\int_{\rn} ((1/2) I+\widetilde{K})a=0$. The proof that $\int_{\rn}\langle A\nabla{\mathcal{S}} a(\cdot, t),e_{n+1}\rangle = 0$ for all $t>0$ follows in the same way, except we use \cite[(4.6)]{AAAHK} instead of Lemma~\ref{l4.1}~(i).
\end{proof}

This concludes the proof of Part 1 of Theorem \ref{P-bdd1}.  At this point, we note for future 
reference the following corollary of \eqref{LP-3} and Proposition~\ref{r4.1}.

\begin{corollary}\label{r4.2}
Suppose that $L$ and $L^*$ satisfy the standard assumptions, and let $\alpha$ denote the minimum of the De Giorgi/Nash exponents for $L$ and $L^*$ in \eqref{eq2.DGN}. Then
\begin{equation}\label{eq4.33*}
\sup_{t> 0}\|\mathcal{D}_L^\pm g(\cdot,t)\|_{\lb(\rn)} \,+\, \|K_L g\|_{\lb(\rn)} \,\leq \,C_\beta\, \|g\|_{\lb(\rn)}\,,
\quad \forall\,\beta\in\left[0,\alpha\right).
\end{equation}
Moreover, $\mathcal{D}_L^\pm 1 $ is constant on $\RR^{n+1}_\pm$, and $K_L1$ is constant on $\RR^n$. The analogous results hold for $L^*$.
\end{corollary}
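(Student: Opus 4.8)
The plan is to deduce both estimates by duality, using the classical identifications $(H^1(\rn))^*=BMO(\rn)$ and $(H^p(\rn))^*=\dot{C}^\beta(\rn)$ with $\beta=n(1/p-1)$; equivalently, $p=n/(n+\beta)$ ranges over $(\p,1]$ exactly as $\beta$ ranges over $[0,\alpha)$, so $\lb(\rn)=(H^p(\rn))^*$ throughout the relevant range. I write $\langle F,G\rangle:=\int_{\rn}F\,\bar G$, so that $\|F\|_{(H^p)^*}=\sup\{\,|\langle F,G\rangle| : \|G\|_{H^p}\le1\,\}$.

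The bound on $K_L$ is immediate. Since $L$ and $L^*$ both satisfy the standard assumptions, the case $\p<p\le1$ of \eqref{LP-3} (established in Part~1), applied to $L^*$, gives $\|\widetilde{K}_{L^*}h\|_{H^p}\le C_p\|h\|_{H^p}$; because $K_L=\adj(\widetilde{K}_{L^*})$ by definition, duality together with $(H^p)^*=\lb$ yields $\|K_Lg\|_{\lb}\le C_\beta\|g\|_{\lb}$ for every $\beta\in[0,\alpha)$.

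For the double layer, the crux is the Fubini identity
\begin{equation*}
\langle\mathcal{D}_L^+f(\cdot,t),h\rangle=\big\langle f,\,\partial_{\nu^*}\mathcal{S}_{L^*}^-h(\cdot,-t)\big\rangle,\qquad t>0,
\end{equation*}
valid for $f\in L^2(\rn)$ and $h\in C_0^\infty(\rn)$. It follows from the definitions \eqref{eq2.23}-\eqref{eq2.23*} once one uses the $t$-independence of the coefficients, which gives $E^*(y,s,x,t)=E^*(y,s-t,x,0)$, hence $\big(\partial_{\nu^*}E^*(\cdot,\cdot,x,t)\big)(y,0)=\big(\partial_{\nu^*}[E^*(\cdot,\cdot,x,0)]\big)(y,-t)$, and then interchanges the order of integration (absolute convergence being supplied by the $L^q$-bounds for $\nabla E^*$ on horizontal slices); here $\mathcal{S}_{L^*}^-$ is the single layer for $L^*$ in the lower half-space and $\partial_{\nu^*}\mathcal{S}_{L^*}^-h(\cdot,-t)=-\langle A^*\nabla\mathcal{S}_{L^*}^-h(\cdot,-t),e_{n+1}\rangle$. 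The analogue of Proposition~\ref{r4.1} for $L^*$ in the lower half-space gives $\sup_{s<0}\|\partial_{\nu^*}\mathcal{S}_{L^*}^-h(\cdot,s)\|_{H^p}\le C_p\|h\|_{H^p}$ for $p\in(\p,1]$, so, evaluating at $s=-t$, the displayed identity and $(H^p)^*=\lb$ yield $\|\mathcal{D}_L^+f(\cdot,t)\|_{\lb}\le C_\beta\|f\|_{\lb}$, uniformly in $t>0$. For $f\in\lb$ (which need not be bounded) one first uses this same identity to \emph{define} $\mathcal{D}_L^+f(\cdot,t)$ as an element of $(H^p)^*$, consistently with the pointwise definition on $L^2\cap\lb$ by the same computation; this is exactly where the possible divergence of the defining kernel integral against a general $\lb$ datum is circumvented. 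The cases $\mathcal{D}_L^-$ and the operators attached to $L^*$ are treated identically, swapping the half-spaces and the roles of $L,L^*$.

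Finally, the ``moreover'' follows by specializing to $g\equiv1$: since a constant has vanishing $\dot{C}^\beta$ seminorm and vanishing $BMO$ norm, $\|1\|_{\lb}=0$ for every $\beta\in[0,\alpha)$, so the estimates just proved force $K_L1$ to be constant on $\rn$ and $\mathcal{D}_L^\pm1(\cdot,t)$ to be constant in $x$ for each fixed $t$. That $\mathcal{D}_L^\pm1$ carries no $t$-dependence follows once more from the $t$-independence of the coefficients: $E^*(\cdot,\cdot,x,0)$ is a null solution of $L^*$ in $\{s<0\}$, so by the divergence theorem the slice-integral $\sigma\mapsto\int_{\rn}\big(\partial_{\nu^*}[E^*(\cdot,\cdot,x,0)]\big)(y,\sigma)\,dy$ is independent of $\sigma<0$ (after a routine truncation in $y$ at spatial infinity), and $\mathcal{D}_L^+1(x,t)$ is this integral at $\sigma=-t$. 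I expect the only genuinely delicate point to be this rigorous realization of $\mathcal{D}_Lf(\cdot,t)$ and $K_Lf$ on the endpoint spaces $\lb$, namely matching the duality-defined operators with the suitably renormalized pointwise layer potentials modulo constants, together with the bookkeeping of conjugations and of the upper/lower half-space interchange in the Fubini identity; granting those, the argument is a routine duality.
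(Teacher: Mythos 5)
Your proposal is correct and follows essentially the same route as the paper: the bounds come by dualizing \eqref{LP-3} and Proposition \ref{r4.1} (for $L^*$, in the opposite half-space) via the identity $\adj(\mathcal{D}_L(\cdot,t))=\partial_{\nu^*}\mathcal{S}_{L^*}(\cdot,-t)$ and $K_L=\adj(\widetilde{K}_{L^*})$, the constancy in $x$ is the vanishing of $\mathcal{D}_L1(\cdot,t)$ and $K_L1$ in the duality sense (your norm argument is equivalent to the paper's pairing against atoms), and the constancy in $t$ is the same flux/integration-by-parts computation using $L^*E^*=0$ together with the $L^2$ decay of $\nabla E$ on annuli to kill the lateral terms after truncation. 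The one point you flag as delicate (matching the duality-defined operators on $\lb$ with renormalized pointwise potentials) is handled at the same level of informality in the paper itself, so there is no gap relative to its proof.
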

\begin{proof}
Consider $\reu$ and set $\partial_{\nu^*}\SL_{t,L^*} f:= \partial_{\nu^*} \SL_{L^*}f (\cdot,t)$,
and $\mathcal{D}_{t,L} f:= \mathcal{D}_Lf(\cdot,t)$. We have $\partial_{\nu^*}\SL_{-t,L^*} = \adj(\mathcal{D}_{t,L})$, and by definition $\widetilde{K}_{L^*}=\adj(K_L)$.   Thus, estimates \eqref{LP-3} and \eqref{eq4.31*} imply
\eqref{eq4.33*} by duality. 

The case $\beta=0$ of \eqref{eq4.33*} shows that $\mathcal{D}_L1(\cdot,t)$ and $K_L1$ exist in $BMO(\RR^n)$, for each $t>0$. The moment conditions obtained in the proof of Proposition~\ref{r4.1} show that for any atom $a$ as in (\ref{atom-X}), and for each $t>0$, we have
\[
\langle \mathcal{D}_L 1(\cdot,t), a\rangle = \int_{\rn}    \partial_{\nu^*} \SL_{L^*} a(\cdot,-t) =0\,, \quad \text{ and }\quad
\langle K_L 1, a\rangle = \int_{\rn} \widetilde{K}_{L^*} a =0,\,
\]
where $\langle\cdot,\cdot\rangle$ denotes the dual pairing between $BMO(\rn)$ and $H^1_{\mathrm{at}}(\rn)$.
This shows, since $a$ was an arbitrary atom, that $\mathcal{D}_L 1(\cdot,t)$ and $K_L 1$ are zero in the sense of $BMO(\rn)$, hence $\mathcal{D}_L 1(x,t)$ and $K_L1(x)$ are constant in $x\in\RR^n$, for each fixed $t>0$.

It remains to prove that $\mathcal{D}_L 1(x,t)$ is constant in $t>0$, for each fixed $x\in\RR^n$. To this end, 
let $\phi_R$ denote the boundary trace of a smooth cut-off function $\Phi_R$ as in \eqref{Phi-R}. We observe that by the definition of
$\mathcal{D}_L$ (cf. \eqref{eq2.23}-\eqref{eq2.23*}), and translation invariance in $t$, we have
\begin{align*}
\partial_t \mathcal{D}_L 1(x,t) &=\lim_{R\to \infty} \partial_t \mathcal{D}_L \phi_R(x,t)
=\lim_{R\to \infty}\int_{\mathbb{R}^{n}}\overline{\Big(\partial_t \partial_{\nu^*} E^*
(\cdot,\cdot,x,t)\Big)}(y,0)\,\phi_R(y)\,dy \\[4pt]
&=\lim_{R\to \infty}\int_{\mathbb{R}^{n}}\overline{\partial_s e_{n+1}\cdot A^*(y)\,\Big(\nabla_{y,s} 
E^*(y,s,x,t)\Big)}\big|_{s=0}\,\phi_R(y)dy \\[4pt]
&=\,\lim_{R\to \infty} \sum_{i=1}^n\sum_{j=1}^{n+1}
\int_{\mathbb{R}^{n}}\overline{ A_{i,j}^*(y)\,\Big(\partial_{y_j}
E^*(y,s,x,t)\Big)}\big|_{s=0}\,\partial_{y_i} \phi_R(y)\,dy\,,
\end{align*}
where in the last step we set $y_{n+1}:= s$, and used that $L^* E^* = 0$ away from the pole at $(x,t)$.  The limit above equals 0, since for $R> C|x|$ with $C>1$ sufficiently large, the term at level $R$ is bounded by
\begin{align*}
R^{-1} \int_{C^{-1}R< |x-y|<CR} & \big|\big(\nabla E(x,t,\cdot,\cdot)\big)(y,0)\big|\, dy\\[4pt]
&\lesssim\, R^{-1+n/2} 
\left(\int_{C^{-1}R< |x-y|<CR}\big |\big(\nabla E(x,t,\cdot,\cdot)\big)(y,0)\big|^2\, dy\right)^{1/2}
\lesssim \,R^{-1}\,,
\end{align*}
where in the last step we used the $L^2$ decay for $\nabla E$ from
\cite[Lemma 2.8]{AAAHK}. This shows that $\mathcal{D}_L 1(x,t)$ is constant in $t>0$, for each fixed $x\in\RR^n$, as required.
\end{proof}

\noindent{\bf Part 2:  estimates \eqref{LP-2}-\eqref{LP-3} in the case $2<p<\pp$.} We begin by stating without proof the following variant of Gehring's lemma 
as established by 
Iwaniec~\cite{I}.

\begin{lemma}\label{lemmaIwaniec} Suppose that $g$, $h\in L^p(\RR^n)$,
with $1<p<\infty$, and that for some $C_0>0$ and for all cubes $Q\subset \RR^n$,
\begin{equation}\label{ms13}
\left(\fint_Qg^p\right)^{1/p}
\leq C_0\fint_{4Q} g+ \left(\fint_{4Q}h^p\right)^{1/p}.
\end{equation}
Then there exists $s=s(n,p,C_0)>p$ and $C=C(n,p,C_0)>0$ such that
\begin{equation*}
\int_{\RR^n}g^s\leq C\int_{\RR^n}h^s.
\end{equation*} 
\end{lemma}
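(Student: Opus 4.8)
The plan is to carry out the classical Gehring--Giaquinta self-improvement argument, which Iwaniec sharpened to the present global, smallness-free form: a Calder\'on--Zygmund stopping-time decomposition at every height, a dichotomy extracted from \eqref{ms13}, a truncation, and finally the layer-cake formula together with an absorption that exploits the fact that the exponent on the term $\fint_{4Q}g$ in \eqref{ms13} is a \emph{single} power, strictly below $p$. First I set $f:=g^p$ and $w:=h^p$, so $f,w\in L^1(\RR^n)$, and raise \eqref{ms13} to the $p$-th power to get
\[
\fint_Q f\;\le\;C_1\Big(\Big(\fint_{4Q}g\Big)^p+\fint_{4Q}w\Big)\qquad\text{for every cube }Q,
\]
with $C_1=C_1(p,C_0)$. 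For each height $\tau>0$, since $f\in L^1$, a Calder\'on--Zygmund decomposition of $f$ at level $\tau$ produces a countable family $\{Q_j\}$ that covers $\{f>\tau\}$ up to a null set, with $\fint_{Q_j}f\approx\tau$, and arranged so that the dilates $\{4Q_j\}$ have overlap bounded by a dimensional constant. On each $Q_j$ the displayed inequality then forces the dichotomy: either (A) $\fint_{4Q_j}g\ge c_2\tau^{1/p}$, or (B) $\fint_{4Q_j}w\ge c_2\tau$, with $c_2=c_2(p,C_0)>0$.

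Next I split the level sets. In case (A), writing $g=g\chi_{\{g\le\theta\tau^{1/p}\}}+g\chi_{\{g>\theta\tau^{1/p}\}}$ and fixing the threshold $\theta=\theta(p,C_0)\in(0,1)$ with $\theta\le c_2/2$, the small part accounts for at most half the lower bound, so that $|4Q_j|\le\frac{2}{c_2\tau^{1/p}}\int_{4Q_j\cap\{g>\theta\tau^{1/p}\}}g$; in case (B), the analogous split of $w$ at height $\theta\tau$ gives $|4Q_j|\le\frac{2}{c_2\tau}\int_{4Q_j\cap\{w>\theta\tau\}}w$. Summing over $j$, using $\int_{\{f>\tau\}}f\le\sum_j\int_{Q_j}f\lesssim\tau\sum_j|4Q_j|$ together with the bounded overlap of $\{4Q_j\}$, I obtain the key distributional inequality, valid for all $\tau>0$ with $C_2=C_2(n,p,C_0)$:
\[
\int_{\{f>\tau\}}f\;\le\;C_2\Big(\tau^{1-\frac1p}\int_{\{g>\theta\tau^{1/p}\}}g\;+\;\int_{\{w>\theta\tau\}}w\Big).
\]

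To conclude, I fix a truncation parameter $k<\infty$, multiply this inequality by $\tau^{\eta-1}$, and integrate over $\tau\in(0,k)$; the truncation is there solely to make every integral finite and is the only point at which the a priori possibility $g\notin L^{p(1+\eta)}$ is handled — it never touches \eqref{ms13}, only the exponent being improved. By Fubini and elementary one-variable integrals (the power bookkeeping uses $p/p'=p-1$ and $\min(g^p,k)^{1/p'}\le g^{p-1}$), the left-hand side equals $\eta^{-1}\int f\,\min(f,k)^\eta$, the $g$-term on the right is at most $C(n,p,C_0)\,\theta^{-p(\eta+1/p')}(\eta+\tfrac1{p'})^{-1}\int f\,\min(f,k)^\eta$, and the $w$-term is at most $C(n,p,C_0)\,\theta^{-\eta}\eta^{-1}\int h^{p(1+\eta)}$. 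Since the coefficient $C_2\,C(n,p,C_0)\,\eta\,\theta^{-p(\eta+1/p')}(\eta+\tfrac1{p'})^{-1}$ tends to $0$ as $\eta\to0^+$, I may fix $\eta=\eta(n,p,C_0)>0$ so small that this coefficient is $<\tfrac12$; then the $g$-term is absorbed into the left, giving $\int f\,\min(f,k)^\eta\le C(n,p,C_0)\int h^{p(1+\eta)}$, and letting $k\to\infty$ by monotone convergence yields $\int g^{p(1+\eta)}\le C\int h^{p(1+\eta)}$, i.e., the claim with $s=p(1+\eta)$.

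The step I expect to be the main obstacle is the very first one: arranging the Calder\'on--Zygmund decomposition so that the selected cubes simultaneously have comparable averages $\fint_{Q_j}f\approx\tau$ (needed to feed \eqref{ms13} and produce the dichotomy) and have dilates $\{4Q_j\}$ of controlled overlap (needed to reassemble the scattered local estimates into a single global integral without loss). A plain family of maximal dyadic cubes need not enjoy the latter property, and reconciling the two — equivalently, upgrading the familiar local, ``smallness''-dependent form of Gehring's lemma to the global, smallness-free statement above — is the substance of Iwaniec's argument; everything downstream (the dichotomy, the level-set splitting, the layer-cake computation, and the absorption) is then routine.
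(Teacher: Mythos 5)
The paper states this lemma without proof, quoting it directly from Iwaniec \cite{I}, so there is no in-house argument to compare against; I assess your outline on its own terms. The overall strategy is the right one, and the second half — the dichotomy, the level-set splitting, the layer-cake integration against $\tau^{\eta-1}\,d\tau$ over $(0,k)$, the absorption for small $\eta$ using that the coefficient of the $g$-term tends to $0$ as $\eta\to 0^+$, and the monotone-convergence passage $k\to\infty$ — is correct as written (the truncation does exactly the job you assign it, since $\int f\min(f,k)^\eta\le k^\eta\|f\|_{L^1}<\infty$ makes the absorption legitimate).

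The gap is the one you yourself flag, and it is genuine as the argument is set up: for the maximal dyadic Calder\'on--Zygmund cubes of $f=g^p$ at height $\tau$, the concentric dilates $4Q_j$ do \emph{not} in general have bounded overlap (cubes of many different generations can crowd a single point), so the step $\sum_j\int_{4Q_j\cap E}g\lesssim\int_E g$ is unjustified, and you do not supply the modified covering you appeal to. The standard repair avoids dilated cubes entirely, and you should use it: in case (A) one has $Mg(x)\ge\fint_{4Q_j}g\ge c_2\tau^{1/p}$ for every $x\in Q_j$, where $M$ is the uncentered maximal operator over cubes (legitimate since $4Q_j$ is a cube containing $x$); hence the \emph{disjoint, undilated} cubes of type (A) satisfy $\sum_{j\in A}|Q_j|\le|\{Mg\ge c_2\tau^{1/p}\}|$, and the truncated weak-$(1,1)$ bound $|\{Mg\ge\lambda\}|\le C_n\lambda^{-1}\int_{\{g>\lambda/2\}}g$ (obtained by splitting $g$ at height $\lambda/2$) gives $\sum_{j\in A}|Q_j|\le C\tau^{-1/p}\int_{\{g>\theta\tau^{1/p}\}}g$ with $\theta=c_2/2$. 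Case (B) is identical with $w$ in place of $g$. Combined with $\int_{\{f>\tau\}}f\le\sum_j\int_{Q_j}f\le 2^n\tau\sum_j|Q_j|$, this yields your key distributional inequality using only the disjointness of the $Q_j$; your threshold-splitting on $4Q_j$ is then subsumed into the weak-type estimate, and the rest of your proof goes through unchanged.
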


\begin{remark}\label{r4.3}
If $1<r<p$, by replacing $g$ with $\tilde{g}:=g^r$,
$h$ with $\tilde{h}:=h^r$, and $p$ with $\tilde{p}:=\frac{p}{r}$, then
the conclusion of the Lemma~\ref{lemmaIwaniec} holds provided (\ref{ms13})
is replaced with
\begin{equation*}
\left(\fint_Qg^p\right)^{1/p}
\leq C_0\left(\fint_{4Q} g^r\right)^{1/r}
+ \left(\fint_{4Q}h^p\right)^{1/p}.
\end{equation*}
In this case $s$ also depends on $r$.
\end{remark}

For the sake of notational convenience, we set
$$\mathcal{S}_t f(x):= \mathcal{S}^\pm_L\,f(x,t), \qquad (x,t)\in \RR^{n+1},$$
so that when $t=0$ we have $\mathcal{S}_0:= S=S_{\!L}$ (cf. \eqref{eq2.24}).
We shall apply the Remark~\ref{r4.3} with 
$g:=\nabla_x \mathcal{S}_{t_0}f$, with $t_0>0$ fixed, $p=2$, and $r=2_{\ast}:=2n/(n+2)$. 
To be precise, we shall prove that for each fixed $t_0>0$, and for every cube $Q\subset\RR^n$, we have
\begin{equation}\label{newSt-3}
\left(\fint_{Q}|\nabla_x \mathcal{S}_{t_0}f|^2\,dx\right)^{1/2}
\lesssim \left(\fint_{4Q}|\nabla_x \mathcal{S}_{t_0}f|^{2_{\ast}}\,dx\right)^{1/2_{\ast}}
+ \left(\fint_{4Q}\Bigl(|f|+N_{**}(\partial_t \mathcal{S}_tf)\Bigr)^2\right)^{1/2}\,,
\end{equation}
for all $f\in L^2\cap L^\infty$, where $N_{**}$ denotes the ``two-sided" nontangential maximal operator
$$N_{**} (u)(x):= \sup_{\{(y,t)\in \RR^{n+1}:\, |x-y|<|t|\}} |u(y,t)|.$$
We claim that
the conclusion of Part 2 of Theorem \ref{P-bdd1}
then follows.   Indeed, for each fixed $t\in \RR$, $\partial_t S_t$ is a Calder\'on-Zygmund operator
with a ``standard kernel'' $K_t(x,y):= \partial_tE(x,t,y,0)$, with Calder\'on-Zygmund constants that are uniform in
$t$ (cf. \eqref{G-est2}-\eqref{G-est3}). Thus, by the $L^2$ bound
\eqref{eq4.prop46}, we have from standard Calder\'on-Zygmund theory and a variant of 
the usual Cotlar inequality for maximal singular integrals that
\begin{equation}\label{eq4.lpnt}\sup_{t>0}\|\partial_t \mathcal{S}_t f\|_{p}
+\|N_{**}(\partial_t \mathcal{S}_t f)\|_p \leq C_p \|f\|_p,\qquad\forall\,p\in(1,\infty).
\end{equation} 
Consequently, if (\ref{newSt-3}) holds for arbitrary $t_0>0$, then there exists $\pp>2$ such that
\[
\sup_{t>0}\|\nabla \mathcal{S}_t f\|_{L^p(\RR^n)}\leq C 
\|f\|_{L^p(\RR^n)},\qquad\forall\,p\in(2,\pp),
\]
by Lemma~\ref{lemmaIwaniec}, Remark~\ref{r4.3} and a density argument. This would prove~\eqref{LP-2}. We could then use Lemma \ref{l4.8*} to obtain \eqref{LP-1}, and \eqref{LP-3a}-\eqref{LP-3} would follow from Lemma \ref{l4.ntjump} (i) and (ii), and another density argument. Thus, it is enough to prove (\ref{newSt-3}).

To this end, we fix a cube $Q\subset\RR^n$ and split
\[
f=f_1+f_2:=f \,1_{4Q}\,+\,f\, 1_{(4Q)^c},\qquad
u=u_1+u_2:=\mathcal{S}_tf_1+\mathcal{S}_tf_2. 
\]
Using \eqref{eq4.prop46}, and the definition of $f_1$, we obtain
\begin{equation}\label{aaahk}
\sup_{t>0}\fint_{Q}|\nabla_x \mathcal{S}_tf_1(x)|^2\,dx
\leq C\fint_{4Q}|f(x)|^2\,dx.
\end{equation}
Thus, to prove \eqref{newSt-3} it suffices to establish 
\begin{equation}\label{newSt-6}
\left(\fint_{Q}|\nabla_x u_2(x,t_0)|^2\,dx\right)^{1/2}
\lesssim  \left(\fint_{4Q}|\nabla_x u(x,t_0)|^{2_{\ast}}
\,dx\right)^{1/2_{\ast}} \, +\,\left(\fint_{4Q}
\Bigl(|f| + N_{**}(\partial_t u)\Bigr)^2\right)^{1/2}.
\end{equation}
To do this, we shall use the following result from \cite{AAAHK}.

\begin{proposition}[{\cite[Proposition 2.1]{AAAHK}}]\label{propslice}  
Let $L$ be as in \eqref{L-def-1}-\eqref{eq1.1*}. If $A$ is $t$-independent, then there exists $C_0>0$, depending only on dimension and ellipticity, such that
for all cubes $Q\subset \RR^n$ and $s\in \RR$ the following holds: if
$Lu=0$ in $4Q\times (s-\ell(Q),s+\ell(Q))$, then  
\[
\frac{1}{|Q|}\int_Q |\nabla u(x,s)|^2 dx \leq C_0 \frac{1}{\ell(Q)^2}
\frac{1}{|Q^{**}|}\iint_{Q^{**}} |u(x,t)|^2 dx dt,
\]
where $Q^{**} := 3Q \times (s-\ell(Q)/2,s+\ell(Q)/2)$.
\end{proposition}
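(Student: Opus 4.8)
\smallskip
\noindent\emph{Strategy of proof.}
Since $A$ is $t$-independent, both the equation $Lu=0$ and the asserted inequality are invariant under translations in the $t$ variable, so one may first reduce to the case $s=0$ (a dilation also reduces to $\ell(Q)=1$, but this is merely a cosmetic normalization). The decisive ingredient, and the only place where $t$-independence is genuinely used, is the following: \emph{if $Lu=0$ on $4Q\times(-\ell(Q),\ell(Q))$, then $v:=\partial_t u$ is again a weak solution of $Lv=0$ on every cylinder compactly contained in $4Q\times(-\ell(Q),\ell(Q))$.} I would prove this via difference quotients: because $A(x,t)=A(x)$, the function $u^\sigma(x,t):=\sigma^{-1}\bigl(u(x,t+\sigma)-u(x,t)\bigr)$ satisfies $Lu^\sigma=0$ on a slightly shrunken cylinder, and Caccioppoli's inequality bounds $\|\nabla u^\sigma\|_{L^2}$ over interior cylinders uniformly in $\sigma$ (since $u^\sigma\to\partial_t u$ in $L^2_{\mathrm{loc}}$, the right-hand sides stay bounded); passing to a weak limit in $W^{1,2}_{\mathrm{loc}}$ as $\sigma\to 0$ identifies $\partial_t u$ as a weak solution, so in particular $\nabla\partial_t u=\partial_t\nabla u\in L^2_{\mathrm{loc}}$.

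Granting this, the slice estimate follows from a one-variable fundamental-theorem-of-calculus argument together with two applications of Caccioppoli's inequality. By Fubini's theorem and $\partial_t\nabla u\in L^2_{\mathrm{loc}}$, for a.e. $x\in 4Q$ the map $t\mapsto\nabla u(x,t)$ has an absolutely continuous representative on $(-\ell(Q),\ell(Q))$, so $\nabla u(\cdot,0)$ is well defined a.e. on $Q$; writing $\nabla u(x,0)=\nabla u(x,t)-\int_0^t\partial_r\nabla u(x,r)\,dr$, squaring, applying Cauchy--Schwarz in $r$, averaging over $t\in(0,h)$ with $h:=\ell(Q)/4$, and integrating in $x\in Q$ gives
\[
\int_Q|\nabla u(x,0)|^2\,dx\;\lesssim\;\frac1h\iint_{Q\times(0,h)}|\nabla u|^2\;+\;h\iint_{Q\times(0,h)}|\nabla\partial_t u|^2 .
\]
Caccioppoli's inequality applied to $u$ bounds the first term by $h^{-1}\ell(Q)^{-2}\iint|u|^2$ over a cylinder inside $Q^{**}$, while Caccioppoli applied first to the solution $\partial_t u$ and then again to $u$ bounds the second term by $h\,\ell(Q)^{-2}\iint|\partial_t u|^2\lesssim h\,\ell(Q)^{-4}\iint|u|^2$, again over cylinders inside $Q^{**}$ and $4Q\times(-\ell(Q),\ell(Q))$. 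Since $h\approx\ell(Q)$, both contributions are $\lesssim\ell(Q)^{-3}\iint_{Q^{**}}|u|^2$; dividing by $|Q|=\ell(Q)^n$ and using $|Q^{**}|\approx\ell(Q)^{n+1}$ produces the claimed bound, with a constant depending only on $n$ and ellipticity.

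The two points I expect to demand the most care are: (a) the verification that $\partial_t u$ is a genuine weak solution — the difference-quotient argument above, which must be carried out using only ellipticity and Caccioppoli, with no appeal to DG/N/M bounds; and (b) the bookkeeping needed to set up a nested chain of cylinders $Q\times(0,h)\subset\cdots\subset Q^{**}\subset 4Q\times(-\ell(Q),\ell(Q))$ so that every invocation of Caccioppoli's inequality (for $u$ and for $\partial_t u$) takes place strictly inside the region where the relevant equation holds. The passage from the solid integrals to the slice is then routine, via Fubini together with the one-dimensional Sobolev embedding.
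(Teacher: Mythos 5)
Your argument is correct, but note that the paper itself gives no proof of this proposition: it is quoted verbatim from \cite[Proposition 2.1]{AAAHK}, and your sketch reproduces the standard argument used there ($t$-independence makes $\partial_t u$ a weak solution via difference quotients, after which the slice bound follows from the fundamental theorem of calculus in $t$, averaging, and two nested applications of Caccioppoli's inequality). The two delicate points you flag --- justifying that $\partial_t u$ solves the equation without any appeal to De Giorgi/Nash/Moser bounds, and arranging the chain of cylinders inside $Q^{**}$ and $4Q\times(s-\ell(Q),s+\ell(Q))$ --- are exactly the right ones, and your treatment of each is sound.
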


Applying Proposition \ref{propslice} with $s:= t_0$ and $u:=\mathcal{S}_t(f1_{(4Q)^c})$, which is a solution of $Lu=0$ in the infinite strip $4Q\times(-\infty,\infty)$, we obtain
\begin{align*}
\fint_{Q}&|\nabla_x u_2(x,t_0)|^2\,dx \lesssim \frac{1}{\ell(Q)^{2}}
\fint_{t_0-\ell(Q)/2}^{t_0+\ell(Q)/2}
\!\fint_{3Q}|{u_2}(x,t)-c_Q|^2 dx dt
\\[4pt]& \lesssim \,\, \frac{1}{\ell(Q)^{2}}
\fint_{t_0-\ell(Q)/2}^{t_0+\ell(Q)/2}
\!\fint_{3Q}|{u_2}(x,t)-{u_2}(x,t_0)|^2 dx dt + \frac{1}{\ell(Q)^2}
\!\fint_{3Q}|{u_2}(x,t_0)-c_Q|^2 dx
\\[4pt]& \lesssim \,\, \frac{1}{\ell(Q)^{2}}
\fint_{t_0-\ell(Q)/2}^{t_0+\ell(Q)/2}
\!\fint_{3Q}\left|\int^{\max\{t_0,t\}}_{\min\{t_0,t\}} \partial_s {u_2}(x,s)\, ds\right|^2 dx dt + \frac{1}{\ell(Q)^2}
\!\fint_{3Q}|{u_2}(x,t_0)-c_Q|^2 dx \\[4pt]
& \lesssim \,\, \fint_{3Q}\fint_{t_0-l(Q)/2}^{t_0+\ell(Q)/2}|\partial_s{u_2}(x,s)|^2 ds dx
\,+\,\left(\fint_{3Q} |\nabla_x {u_2}(x,t_0)|^{2_*} dx\right)^{2/2_*}
\\[4pt] & \lesssim \,\, \fint_{3Q}\big(N_{**}(\partial_t {u_2})(x)\big)^2dx +
\left(\fint_{3Q} |\nabla_x {u_2}(x,t_0)|^{2_*} dx\right)^{2/2_*}
\\[4pt] & \lesssim \,\,  \fint_{3Q}\big(N_{**}(\partial_t {u})(x)\big)^2dx +
\left(\fint_{3Q} |\nabla_x {u}(x,t_0)|^{2_*} dx\right)^{2/2_*} +
 \fint_{4Q} |f(x)|^2dx,
\end{align*}
where in the third last estimate we have made an appropriate choice of $c_Q$ 
in order to use Sobolev's inequality, and in the last estimate we wrote $u_2=u-u_1$, and then used \eqref{eq4.lpnt} with $p=2$ to control $N_{**}(\partial_t u_1)$, and 
\eqref{aaahk} to control $\nabla_x u_1$. Estimate \eqref{newSt-6} follows, and so the proofs of estimates \eqref{LP-2}-\eqref{LP-3} are now complete.

\smallskip

\noindent{\bf Part 3:  proof of estimate \eqref{LP-4}}\footnote{We are indebted to S. Mayboroda for suggesting this proof, which simplifies our original argument.}.  
We shall actually prove a more general result.  It will be convenient to use the following notation. For $f\in L^2(\rn,\CC^{n+1})$, set
\begin{equation}\label{eq7.8a}
\big(\SL^\pm\nabla\big) f(x,t) := \int_{\rn} \Big(\nabla E(x,t,\cdot,\cdot)\Big)(y,0)\, \cdot f(y)\,dy\,,\qquad (x,t)\in\RR^{n+1}_\pm\,,
\end{equation} 
where $\Big(\nabla E(x,t,\cdot,\cdot)\Big)(y,0):=\Big(\nabla_{y,s} E(x,t,y,s)\Big)\big|_{s=0}\,$. Our goal is to prove that
\begin{equation}\label{LP-4a}
\big\|N_*\big((\SL \nabla) f\big)\big\|_{L^p(\RR^n)} \, \leq \,C_p\|f\|_{L^p(\RR^n,\CC^{n+1})}, 
\qquad\forall\,p\in\left(\frac{\pp}{\pp-1},\infty\right)\,,
\end{equation} 
which clearly implies \eqref{LP-4}, by the definition of $\mathcal{D}$. It is enough to prove \eqref{LP-4a} for all $f\in L^p\cap L^2$.  Moreover,  it is enough to work with $\widetilde{N}_*$ rather than $N_*$, since for solutions, the former controls the latter pointwise, 
for appropriate choices of aperture, by the Moser estimate \eqref{eq2.M}.

In view of Lemma~\ref{l4.ntjump}, we extend definition \eqref{eq7.8a} to the boundary of $\RR^{n+1}_\pm$ by setting
\[
\big(\SL^\pm\nabla\big) f(\cdot,0) := \adj\left(\mp\frac{1}{2A_{n+1,n+1}}e_{n+1} + {\bf T}\right)f\,.
\]
By duality, since \eqref{LP-2} holds for $L^*$, as well as for $L$, and in both half-spaces, we have
\begin{equation}\label{eq4.65}
\sup_{\pm t\geq0}\|(\SL^\pm \nabla)  f(\cdot,t)\|_p \leq C_p \|f\|_p\,,\qquad\forall\, p\in\left(\frac{\pp}{\pp-1},\infty\right)\,,\end{equation}
for all $f\in L^p\cap L^2$, where we used Lemma \ref{l4.ntjump} (ii) to obtain the bound for $t=0$.

We now fix $p>\pp/(\pp-1)$ and choose $r$ so that $\pp/(\pp-1)<r<p$. Let $z\in \RR^n$ and $(x,t)\in \Gamma(z)$. For each integer $k\geq0$, set $\Delta_k:= \{y\in \RR^n:  |y-z|< 2^{k+2}t\}\,$ and write
$$f=f\,1_{\Delta_0}+\sum_{k=1}^\infty f\,1_{\Delta_k\setminus\Delta_{k-1}}
=: f_0+\sum_{k=1}^\infty f_k=: f_0 +f^\star\,.$$ 
Likewise, set $u:= (\SL \nabla) f,\,u_k:= (\SL \nabla) f_k\,$, and $ u^\star:= (\SL \nabla) f^\star$.
Also, let $B_{x,t}:=B((x,t),t/4)$, $\tB_{x,t}:= B((x,t),t/2)$
and $B^k_{x,t}:=B((x,t),2^kt)$ 
denote the Euclidean balls in $\ree$ centered at $(x,t)$ of radius $t/4$, $t/2$, and $2^kt$, respectively.
Using the Moser estimate \eqref{eq2.Mr}, we have
\begin{align*}\left(\fiint_{B_{x,t}}|u|^2\right)^{1/2} &\lesssim \left(\fiint_{\tB_{x,t}}|u|^r\right)^{1/r} 
\leq \left(\fiint_{\tB_{x,t}}|u_0|^r\right)^{1/r}
+\left(\fiint_{\tB_{x,t}}|u^\star|^r\right)^{1/r}
\\[4pt] 
&\lesssim\,
\left(\fiint_{\tB_{x,t}}|u_0|^r\right)^{1/r}
+\,\left(\fiint_{\tB_{x,t}}|u^\star(y,s)-u^\star(y,0)|^r\,dyds\right)^{1/r}
\\[4pt]&\qquad+
\left(\fint_{\Delta_0}|u(\cdot,0)|^r\right)^{1/r}+\left(\fint_{\Delta_0}|u_0(\cdot,0)|^r\right)^{1/r}
\\[4pt]
&=:I+II+III+IV\,.
\end{align*}
By \eqref{eq4.65}, we have
\[
I +IV\lesssim \left(\fint_{\Delta_0} |f|^r\right)^{1/r}\,.
\]
To estimate $II$, note that for all $(y,s)\in B^k_{x,t}$, including when $s=0$, we must have 
\[
u_k(y,s) := (\SL \nabla) f_k(y,s) = \int_{\rn} \Big(\nabla E(y,s,\cdot,\cdot)\Big)(z,0)\, \cdot f_k(z)\,dz\,,
\]
since $\supp f_k \subset \Delta_k\setminus\Delta_{k-1}$ does not intersect $B^k_{x,t}$. Thus, $L u_k = 0$ in $B^k_{x,t}$, and so by the De~Giorgi/Nash estimate~\eqref{eq2.DGN}, followed by \eqref{eq2.Mr} and \eqref{eq4.65}, we obtain
$$II\leq \sum_{k=1}^\infty \sup_{B_{x,t}} |u_k(y,s)-u_k(y,0)|
\lesssim \sum_{k=1}^\infty2^{-\alpha k}
\left(\fiint_{B^k_{x,t}}|u_k|^r\,\right)^{1/r}
\lesssim \sum_{k=1}^\infty2^{-\alpha k}\left(\fint_{\Delta_k}|f|^r\,\right)^{1/r}\,.$$
We also have $III \leq (M(|u(\cdot,0)|^r)(z))^{1/r}$, where $M$ denotes the Hardy-Littlewood maximal operator. Altogether, for all $z\in\RR^n$, after taking the supremum over $(x,t)\in\Gamma(z)$,
we obtain
\begin{equation}\label{eq4.pwnm} 
N_*((\SL \nabla) f)(z)\lesssim \widetilde{N}_*((\SL \nabla) f)(z)\lesssim \big(M\big(|(\SL \nabla) f(\cdot,0)|^r\big)(z)\big)^{1/r}
+\big(M\big(|f|^r\big)(z)\big)^{1/r}\,.\end{equation}
Estimate \eqref{LP-4a}, and thus also \eqref{LP-4}, then follow readily from
\eqref{eq4.pwnm} and \eqref{eq4.65}.

\smallskip

\noindent{\bf Part 4:  proof of estimate \eqref{LP-5}}.
We first recall the following square function estimate, whose proof is given in \cite[Section 3]{HMaMo}:
\begin{equation*}
\iint_{\mathbb{R}_+^{n+1}}\left|t\,\nabla\left(\SL\nabla\right)f(x,t)\right|^{2}\,\frac{dxdt}{t}
 \lesssim\,\Vert
f\Vert_{L^2(\rn)}^{2}\,.
\end{equation*}
By the definition of $\D$, this implies in particular that
\begin{equation}
\iint_{\mathbb{R}_+^{n+1}}\left|t\,\nabla\D f(x,t)\right|^{2}\,\frac{dxdt}{t}
 \lesssim\,\Vert
f\Vert_{L^2(\rn)}^{2}\,.\label{eqA1}
\end{equation}

We now proceed to prove \eqref{LP-5}.    
The proof follows a classical argument of \cite{FS}. Fix a cube $Q$, set $Q_0:=32Q$ and observe that since
$\nabla\D 1=0$ by Corollary~\ref{r4.2}, 
we may assume without loss of generality that $f_{Q_0}:=\fint_{Q_0} f=0$.
We split $f=f_0+\sum_{k=4}^\infty f_k$, where $f_0:= f1_{Q_0}$, and 
$f_k:= f1_{2^{k+2}Q\setminus 2^{k+1} Q}$, $k\geq 4$.

By \eqref{eqA1}, we have
\begin{equation*}
\int_0^{\ell(Q)}\!\!\!\int_Q \left|t\,\nabla\D f_0(x,t)\right|^{2}\,\frac{dxdt}{t}
 \lesssim\, \int_{Q_0} |f|^2\,= \int_{Q_0} |f-f_{Q_0}|^2\,\lesssim \,
|Q|\, \|f\|^2_{BMO(\rn)}\,.
\end{equation*}
Now suppose that $k\geq 4$.  Since $u_k:=\D f_k$ solves $Lu_k=0$ in
$4Q\times(-4\ell(Q),4\ell(Q))$, by Caccioppoli's inequality, we have
$$\int_0^{\ell(Q)}\!\!\!\int_Q \left|t\,\nabla\D f_k(x,t)\right|^{2}\,\frac{dxdt}{t}
 \lesssim\,\fint_{-\ell(Q)}^{2\ell(Q)}\!\!\int_{2Q} \left|\D f_k(x,t)-c_{k,Q}\right|^{2}\,dxdt\,,$$
 where the constant $c_{k,Q}$ is at our disposal.  We now choose $c_{k,Q}:= 
 \D f_k(x_Q,t_Q)$, where $x_Q$ denotes the center of $Q$, and $t_Q>0$ is chosen
such that $|t-t_Q|<2\ell(Q)$ for all $t\in (-\ell(Q),2\ell(Q))$.
We observe that, by the Cauchy-Schwarz inequality,
\begin{align}\begin{split}\label{eq4.44}
\big|\D f_k&(x,t)-c_{k,Q}\big|^{2}\\[4pt]
&\leq\, \int_{2^{k+2}Q\setminus2^{k+1}Q}\left|\nabla_{y,s}\big(E(x,t,y,s) - 
E(x_Q,t_Q,y,s)\big)|_{s=0}\right|^2dy\,\int_{2^{k+2}Q\setminus2^{k+1}Q}|f|^2\\[4pt]
&\lesssim \, 2^{-2k\alpha}\fint_{2^{k+2}Q}|f|^2\,=\,
2^{-2k\alpha}\fint_{2^{k+2}Q}|f-f_{Q_0}|^2\lesssim\, 
k^2 \,2^{-2k\alpha}\, \|f\|^2_{BMO}\,,
\end{split}\end{align}
where we have used Lemma \ref{Lemma1}, and then the telescoping argument of \cite{FS}, 
in the last two inequalities.  
Summing in $k$, we obtain \eqref{LP-5}.

\smallskip

\noindent{\bf Part 5:  proof of estimate \eqref{LP-6}}. It suffices to prove that $\|\mathcal{D} f\|_{\dot{C}^\beta({\reu})} \leq  C_\beta \|f\|_{\dot{C}^\beta(\rn)}\,$, since then $\mathcal{D} f$ has an extension in $\dot{C}^\beta(\overline{\reu})$. Moreover, by Corollary~\ref{cor4.47} below, this extension must then satisfy $\mathcal{D}_L^{\pm }g (\cdot,0) = (\mp\frac{1}{2}I + K_L)g$.

For $a> 0$, let $I:= Q\times [a, a+\ell(Q)]\,$ denote any $(n+1)$-dimensional cube contained in ${\reu}$, 
where as usual,  $Q$ is a cube in $\rn$. 
By the well-known criterion of N. Meyers
\cite{Me}, it is enough to show that for every such $I$, there is a constant $c_I$ such that
\begin{equation}\label{eq4.45}
\frac1{|I|}\iint_I\, \frac{|\D f(x,t)- c_I|}{\ell(I)^{\,\beta}}
\,dxdt\, \leq \,C_\beta\, 
\|f\|_{\dot{C}^\beta(\rn)}\,,
\end{equation}
for some uniform constant $C_\beta$ depending only on $\beta$
and the standard constants. To do this, we set $c_I := \fint_Q \D f(\cdot, a +\ell(Q))$, and $Q_0:=32Q$. For this choice of $c_I$, we may suppose without loss of generality that
$f_{Q_0}:=\fint_{Q_0} f =0$, since $\nabla\mathcal{D} 1=0$ by Corollary~\ref{r4.2}.
We then make the same splitting $f = f_0 + \sum_{k=4}^\infty f_k$ as in Part 4 above,
which in turn induces a corresponding splitting $c_I=\sum c_{k,I}$,
with $c_{k,I} := \fint_Q \D f_k(\cdot,a +\ell(Q))$.
By \eqref{LP-4}, we have
\[
\sup_{t> 0}\|\D f(\cdot,t)\|_{L^2(\rn)} \leq C \|f\|_{L^2(\rn)}\,.
\]
Consequently, since $\ell(I)=\ell(Q)$,
\begin{align*}
\frac1{|I|}\iint_I\,&\frac{|\D f_0(x,t)- c_{0,I}|}{\ell(I)^{\,\beta}}
\,dxdt\, \leq\,
\frac1{\ell(Q)^{\,\beta}}\, \sup_{t>0} \fint_Q |\D f_0(x,t)| \, dx
\\[4pt] &\leq \, C\ell(Q)^{-\beta} \left(\fint_{Q_0}|f|^2\right)^{1/2}
\,=\, C\ell(Q)^{-\beta} \left(\fint_{Q_0}|f-f_{Q_0}|^2\right)^{1/2}\,\leq
 \,C\, 
\|f\|_{\dot{C}^\beta(\rn)}\,.
\end{align*}
For $k\geq 4$, we then observe that, exactly as in \eqref{eq4.44},
we have
\[
\ell(I)^{-\beta}\left|\D f_k(x,t)-c_{k,I}\right| 
\,\lesssim \, 2^{-k\alpha}\ell(Q)^{-\beta}\left(\fint_{2^{k+2}Q\setminus2^{k+1}Q}|f-f_{Q_0}|^2\right)^{1/2}\,\lesssim\, 
k \,2^{-k(\alpha-\beta)}\, \|f\|_{\dot{C}^\beta(\rn)}\,,
\]
where in the last step we have again used the telescoping argument of \cite{FS}. Summing in~$k$, we obtain \eqref{LP-6}. This completes the proof of Theorem \ref{P-bdd1}.
\end{proof}

We conclude this section with the following immediate corollary of Theorem \ref{P-bdd1}. We will obtain more refined versions of some of these convergence results in Section~\ref{s6}.

\begin{corollary}\label{cor4.47}
Suppose that $L$ and $L^*$ satisfy the standard assumptions, let $\alpha$ denote the minimum of the De Giorgi/Nash exponents for $L$ and $L^*$ in \eqref{eq2.DGN}, set $\p:=n/(n+\alpha)$ and let $\pp>2$ be as in Theorem \ref{P-bdd1}.

\noindent If $1<p<\pp$ and $\pp/(\pp-1)<q<\infty$, then for all $f\in L^p(\mathbb{R}^n)$ and $g\in L^q(\rn)$, one has
\begin{enumerate}
\item[(i)] $-\langle A\nabla{\mathcal{S}}_L^\pm f(\cdot, t),e_{n+1}\rangle \to
\left(\pm \frac{1}{2}I + \widetilde{K}_L\right)f$ weakly in $L^p$ as $t\to 0^\pm$.
\item[(ii)]  $\nabla \mathcal{S}^\pm_L f(\cdot,t) 
\to \left(\mp\frac{1}{2A_{n+1,n+1}}e_{n+1} + {\bf T}_L\right)f$ weakly in $L^p$ as $t\to 0^\pm$.
\item[(iii)]$\mathcal{D}_L^{\pm }g (\cdot,t)
\to \left(\mp\frac{1}{2}I + K_L \right)g$ weakly in $L^q$ as $t\to 0^\pm$.
\end{enumerate}
If $\p<p\leq 1$ and $0\leq \beta<\alpha$, then for all $f\in H^p(\rn)$ and $g\in \lb(\rn)$, one has
\begin{enumerate}
\item[(iv)] $-\langle A\nabla{\mathcal{S}}_L^\pm f(\cdot, t),e_{n+1}\rangle \to
\left(\pm \frac{1}{2}I + \widetilde{K}_L\right)f$ in the sense of tempered distributions as $t\to 0^\pm$.
\item[(v)]  $\nabla_x \mathcal{S}^\pm_L f(\cdot,t) 
\to \nabla_x S_{\!L}f$ in the sense of tempered distributions as $t\to 0^\pm$.
\item[(vi)] $\mathcal{D}_L^{\pm }g (\cdot,t)\to \left(\mp\frac{1}{2}I + K_L \right)g$ in the weak* topology on $\lb(\rn)$, $0\leq\beta<\alpha$, as $t\to 0^\pm$. Moreover, if $0<\beta<\alpha$, then $\mathcal{D}_L^{\pm }g (\cdot,0) = \left(\mp\frac{1}{2}I + K_L \right)g$ in the sense of $\dot{C}^\beta(\rn)$.
\end{enumerate}
\noindent If $\p<p<\pp$, then for all $f\in H^p(\rn)$, one has
\begin{enumerate}
\item[(vii)]  $\mathcal{S}^\pm_L f(\cdot,t) \to S_{\!L}f$ in the sense of tempered distributions as $t\to 0^\pm$.
\end{enumerate}
The analogous results hold for $L^*$.
\end{corollary}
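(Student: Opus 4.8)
The plan is to reduce every assertion to the corresponding $L^2$ statement of Lemma~\ref{l4.ntjump} (which disposes of data in $L^2$) by a density argument, using the uniform‑in‑$t$ boundedness estimates of Theorem~\ref{P-bdd1}, Proposition~\ref{r4.1} and Corollary~\ref{r4.2}, together with the boundedness of the limiting operators $\widetilde{K}_L$, $K_L$, ${\bf T}_L$ and $\nabla_x S_{\!L}$ on the relevant spaces (again by \eqref{LP-3a}, \eqref{LP-3} and Corollary~\ref{r4.2}).

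For items (i)--(iii), first observe that by \eqref{LP-2} (for (i) and (ii)) and \eqref{LP-4} (for (iii)) the families $\{-\langle A\nabla\mathcal{S}^\pm_L f(\cdot,t),e_{n+1}\rangle\}_{t}$, $\{\nabla\mathcal{S}^\pm_L f(\cdot,t)\}_{t}$ and $\{\mathcal{D}^\pm_L g(\cdot,t)\}_{t}$ are bounded in $L^p(\rn)$, respectively $L^q(\rn)$, uniformly in $t$. I would first take $f\in L^p\cap L^2$ (resp.\ $g\in L^q\cap L^2$): then the stated convergence holds weakly in $L^2$ by Lemma~\ref{l4.ntjump}, and testing against the dense class $L^{p'}\cap L^2\subset L^{p'}$ (resp.\ $L^{q'}\cap L^2$) and invoking the uniform $L^p$ (resp.\ $L^q$) bound promotes this to weak convergence in $L^p$ (resp.\ $L^q$). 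An $\varepsilon/3$ argument, using the uniform bounds on both the layer potentials and the limiting operators and the density of $L^p\cap L^2$ in $L^p$ (resp.\ $L^q\cap L^2$ in $L^q$), then yields (i)--(iii) for arbitrary data.

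For items (iv), (v) and (vii) I would argue in the same spirit, now exploiting that finite linear combinations of $H^p(\rn)$-atoms --- each of which lies in $L^2(\rn)$ --- are dense in $H^p(\rn)$, together with the continuous embeddings $H^p(\rn)\hookrightarrow\s'(\rn)$ and $\dot{H}^{1,p}(\rn)\hookrightarrow\s'(\rn)/\CC$. For an atom $a$, items (iv) and (v) are immediate from Lemma~\ref{l4.ntjump}(i) and (ii) (weak $L^2$ convergence implies convergence in $\s'$), while (vii) follows from the size bound \eqref{G-est1} and dominated convergence, since $E(x,t,y,0)\to E(x,0,y,0)$ as $t\to0$ for $x\neq y$ by the continuity of the solution $E(\cdot,\cdot,y,0)$ away from its pole. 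The uniform‑in‑$t$ estimates of Theorem~\ref{P-bdd1} and Proposition~\ref{r4.1} (in particular \eqref{eq4.31*} for (iv), and \eqref{eq4.31**} and \eqref{LP-2} for (v) and (vii)) then close the $\varepsilon/3$ argument for general data.

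Item (vi) is the delicate point, and I would treat it in two steps. For the weak$^*$ convergence $\mathcal{D}^\pm_L g(\cdot,t)\to(\mp\tfrac12 I+K_L)g$ in $\lb(\rn)=(H^p(\rn))^*$: by Corollary~\ref{r4.2} the family is bounded in $\lb(\rn)$, so it suffices to test against $H^p$-atoms $a$. Using the adjoint relations $\mathcal{D}_L(\cdot,t)=\adj\big(\partial_{\nu^*}\mathcal{S}_{L^*}(\cdot,-t)\big)$ from the proof of Corollary~\ref{r4.2} and $K_L=\adj(\widetilde{K}_{L^*})$, one rewrites the pairing as $\langle\mathcal{D}^\pm_L g(\cdot,t),a\rangle=\langle g,\,h_t\rangle$ with $h_t:=\partial_{\nu^*}\mathcal{S}^\mp_{L^*}a(\cdot,\mp t)$; by the proof of Proposition~\ref{r4.1} (applied to $L^*$ and in the relevant half-space) the $h_t$, $t\geq0$, form a family of $H^p$-molecules adapted to the cube of $a$ with constants uniform in $t$, and by Lemma~\ref{l4.ntjump}(i) for $L^*$ they converge weakly in $L^2$ to $h_0=(\mp\tfrac12 I+\widetilde{K}_{L^*})a$. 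The remaining point, which I expect to be the main obstacle, is that $\langle g,h_t\rangle\to\langle g,h_0\rangle$ even though $g$ need not lie in $L^2$: on any fixed ball the restriction of $g$ does lie in $L^2$ (by continuity when $\beta>0$, by John--Nirenberg when $\beta=0$), so weak $L^2$ convergence applies there, while the molecular decay of the $h_t$, uniform in $t$, paired against the at‑most‑polynomial growth of $g$ after subtracting a mean, makes the contribution of the complementary annuli uniformly small; combining the two gives the convergence of the pairing. For the pointwise identification when $0<\beta<\alpha$, I would invoke \eqref{LP-6}: since $\mathcal{D}_L g$ has an extension in $\dot{C}^\beta(\overline{\reu})$, we get $\mathcal{D}_L g(\cdot,t)\to\mathcal{D}_L g(\cdot,0)$ uniformly and hence weak$^*$ in $\dot{C}^\beta(\rn)$, and comparing with the weak$^*$ limit just computed forces $\mathcal{D}^\pm_L g(\cdot,0)=(\mp\tfrac12 I+K_L)g$ in $\dot{C}^\beta(\rn)$. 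Finally, the assertions for $L^*$ follow verbatim with the roles of $L$ and $L^*$ interchanged.
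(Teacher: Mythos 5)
Your proposal is correct and follows essentially the same route as the paper: items (i)--(v) and (vii) are handled there by precisely the density/uniform-bound argument you describe (the paper's only explicit added detail is that for (vii) with $p\leq 1$ one passes from the gradient bound \eqref{eq4.31**} to control of $\mathcal{S}f(\cdot,t)$ itself via the Sobolev embedding $\|h\|_{L^q}\lesssim \|\nabla_x h\|_{H^p}$, $1/q=1/p-1/n$, a step you leave implicit), while item (vi) is proved by the same dualization to the uniform-in-$t$ family of molecules $\partial_{\nu^*}\mathcal{S}_{L^*}a(\cdot,\mp t)$, splitting $g-g_Q$ into a near part, where Lemma \ref{l4.ntjump} applies, and far annuli controlled by \eqref{eqSt-9} and a telescoping argument. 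Your identification of the boundary trace for $0<\beta<\alpha$ via the $\dot{C}^\beta(\overline{\reu})$ extension from \eqref{LP-6} also matches the paper.
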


\begin{proof}
Items (i)-(v) of the corollary follow immediately from Theorem \ref{P-bdd1}, Proposition~\ref{r4.1}, Lemma \ref{l4.ntjump},
and the fact that $L^2\cap H^p$ is dense in $H^p$ for $0<p<\infty$. We omit the details. Item (vii) is ``elementary" in the case $p>1$, by \eqref{G-est1}-\eqref{G-est3}. The case $\p<p\leq 1$ follows readily from the case $p=2$, the density of $L^2\cap H^p$ in $H^p$, estimate \eqref{LP-3a}, Proposition~\ref{r4.1}, and the Sobolev embedding (see, e.g., \cite[III.5.21]{St2})$$\|h\|_{L^q(\rn)} \lesssim \|\nabla_x h\|_{H^p(\rn)}\,,\qquad \frac1q=\frac1p -\frac1n,$$ since $p>n/(n+1)$ implies $q>1$. Again we omit the routine details.
 
We prove item (vi) as follows, treating only layer potentials for $L$ in $\reu$, 
 as the proofs for $L^*$ and in $\ree_-$ are the same. 
We recall that $\lb(\rn) = (H^p_{\mathrm{at}}(\rn))^*$, with $p=n/(n+\beta)$
(so that, in particular, $n/(n+1) <p\leq 1$). 
It is therefore enough to prove that
\begin{equation}\label{7.dualitylimit}
\left\langle a, \mathcal{D}_Lg(\cdot,t) \right\rangle \xrightarrow{t\to 0^+} 
\left\langle a,\Big(( - 1/2) I + K_L\Big)g
\right\rangle,
\end{equation} 
where $a$ is an $H^p(\rn)$-atom supported in a cube $Q\subset\rn$ as in \eqref{atom-X}, and $\langle\cdot,\cdot\rangle$ denotes the dual pairing between $H^p_{\mathrm{at}}(\rn)$ and $\Lambda^\beta(\rn)$. Using Corollary~\ref{r4.2} and dualizing, we have  
\begin{align*}\begin{split}
\left\langle a, \mathcal{D}_Lg(\cdot,t) \right\rangle 
&=\, \left\langle a, \mathcal{D}_L(g-g_Q)(\cdot,t) \right\rangle
=\, \left \langle \partial_{\nu^*} \SL_{L^*}a (\cdot,-t), g-g_Q \right\rangle \\[4pt]
&=\, \left \langle \partial_{\nu^*} \SL_{L^*}a (\cdot,-t), (g-g_Q) 1_{\lambda Q} \right\rangle \,+\,
\left \langle \partial_{\nu^*} \SL_{L^*}a (\cdot,-t), (g-g_Q) 1_{(\lambda Q)^c} \right\rangle \\[4pt]
&=: I_t(\lambda)+II_t(\lambda)\,,
\end{split}\end{align*}
where $g_Q:= \fint_Q g$, and $\lambda>0$ is at our disposal.
Since $a \in L^2$ and $(g-g_Q) 1_{\lambda Q}\in L^2$, by Lemma \ref{l4.ntjump}, we have
\[
I_t(\lambda) \xrightarrow{t \to 0^+}\left \langle \left((-1/2)I + \widetilde{K}_{L^*}\right)a, (g-g_Q) 
1_{\lambda Q} \right\rangle 
=\left \langle  a,\Big(( - 1/2) I + K_{L}\Big)(g-g_Q) 1_{\lambda Q} \right\rangle\,. 
\]
Setting $R_j:=2^{j+1} \lambda Q\setminus 2^j \lambda Q$,
we have
\begin{align*}
|II_t(\lambda)| 
\, &\leq\, \sum\limits_{j=0}^\infty \| \partial_{\nu^*} \SL_{L^*}a (\cdot,-t)\|_{L^2(R_j)}
\,\|g-g_{Q}\|_{L^2(R_j)}
\\[4pt] &\lesssim \sum_j (2^j\lambda)^{-\alpha-\frac{n}{2}}|Q|^{\frac12-\frac1p} 
\,\|g-g_{Q}\|_{L^2(R_j)}
\,\lesssim\, \sum\limits_j (2^j\lambda)^{\,(\beta-\alpha)/2} \|g\|_{\Lambda^\beta}  \approx \lambda^{\,(\beta-\alpha)/2}\|g\|_{\Lambda^\beta},
\end{align*}
where in the second and third inequalities, we used 
\eqref{eqSt-9} and then a telescoping argument.  We observe that the bound \eqref{eqSt-9} is uniform in $t$,
so that 
$$\lim_{\lambda\to \infty} II_t(\lambda) = 0\,,\quad {\rm uniformly \,\,in}\,\, t\,.$$
Similarly, by \eqref{eqSt-10}~(iii) with $m:=((-1/2)I + \widetilde{K}_{L^*})a$, we have
\begin{align*}
\Big\langle  a,\Big(( - 1/2) I + K_{L}\Big)(g-&g_Q) \textbf{1}_{(\lambda Q)^c} \Big\rangle\\[4pt] 
&=\,\Big\langle \left((-1/2)I + \widetilde{K}_{L^*}\right)a, (g-g_Q) \textbf{1}_{(\lambda Q)^c} \Big\rangle 
\lesssim \lambda^{\,(\beta-\alpha)/2}\|g\|_{\Lambda^\beta}\, \to\, 0\,,
\end{align*}
as $\lambda \to \infty$. Using Corollary~\ref{r4.2} again, we then obtain \eqref{7.dualitylimit}.
\end{proof}

\section{Solvability via the method of layer potentials: Proof of Theorem \ref{th2}}
\setcounter{equation}{0}
The case $p=2$ of Theorem~\ref{th2} was proved in \cite{AAAHK} via the method of layer potentials. We shall now use Theorem~\ref{P-bdd1} and perturbation techniques to extend that result to the full range of indices stated in Theorem~\ref{th2}.

\begin{proof}[Proof of Theorem \ref{th2}]
Let $L:=-\dv A\nabla$ and $L_0:=-\dv A_0\nabla$ be as in \eqref{L-def-1} and \eqref{eq1.1*}, with $A$ and $A_0$ both $t$-independent, and suppose that $A_0$ is real symmetric. Let $\eps_0>0$ and suppose that $\|A-A_0\|_\infty <\eps_0$. We suppose henceforth that $\eps_0>0$ is small enough (but not yet fixed) so that, by Remark~\ref{firstrem}, every operator $L_\sigma:= (1-\sigma)L_0 +\sigma L,\, 0\leq\sigma\leq 1$, along with its Hermitian adjoint, satisfies the standard assumptions, with uniform control of the ``standard constants". For the remainder of this proof, we will let $\epp$ denote an arbitrary small positive number, not necessarily the same at each occurrence, but ultimately depending only on the standard constants and the perturbation radius $\eps_0$.

Let us begin with the Neumann and Regularity problems. As mentioned in the introduction,  for real symmetric coefficients (the case $A=A_0$), solvability of $(N)_p$ and $(R)_p$ was obtained in \cite{KP} in the range $1\leq p <2+\epp$.
Moreover, although not stated explicitly in \cite{KP}, the methods of that paper provide the analogous Hardy space results in the range $1-\epp<p<1$, but we shall not use this fact here. We begin with two key observations.

Our first observation is that, by Theorem \ref{P-bdd1} and analytic perturbation theory,
\begin{align}\begin{split}\label{eq5.1a}
\|\big(\widetilde{K}_{L} - \widetilde{K}_{L_0}\big) f\|_{H^p(\rn)} \,+\,
\|\nabla_x&\left(S_{\!L} - S_{\!L_0}\right) f\|_{H^p(\rn)}\\[4pt]
&\leq\, C_p\, \|A-A_0\|_{L^\infty(\rn)}\,\|f\|_{H^p(\rn)}\, ,\quad 1-\epp < p<2+\epp\,.
\end{split}\end{align}
We may verify~\eqref{eq5.1a} by following the arguments in \cite[Section 9]{HKMP2}.

Our second observation is that we have the pair of estimates
\begin{equation}\label{eq5.2a}
\|f\|_{H^p(\rn)}\,\leq \,C_p\,
\|\big((\pm1/2)I + \widetilde{K}_{L_0}\big) f\|_{H^p(\rn)} 
\, ,\quad 1\leq p<2+\epp\,,
\end{equation}
and 
\begin{equation}\label{eq5.3a}
\|f\|_{H^p(\rn)}\,\leq \,C_p\,
\|\nabla_xS_{\!L_0} f\|_{H^p(\rn)}\, ,\quad 1\leq p<2+\epp\,.
\end{equation}
We verify  \eqref{eq5.2a} and \eqref{eq5.3a} by using Verchota's argument 
in \cite{V} as follows.  First, it is enough to establish these estimates
for $f\in L^2(\rn)\cap H^p(\rn)$, 
which is dense in $H^p(\rn)$.
Then, by the triangle inequality, we have
\begin{align}\begin{split}\label{eq5.4a}
C_p \|f\|_{H^p(\rn)}\,&\leq\,
\|\big((1/2)I + \widetilde{K}_{L_0}\big) f\|_{H^p(\rn)} \,+ \,\|\big((1/2)I - \widetilde{K}_{L_0}\big) f\|_{H^p(\rn)} 
\\[4pt] &= \|\partial_\nu u_0^+\|_{H^p(\rn)} + \|\partial_\nu u_0^-\|_{H^p(\rn)}\,,
\end{split}\end{align}
where $u_0^\pm :=\mathcal{S}_{L_0}^\pm f$, and we have used the jump relation formula in
Lemma~\ref{l4.ntjump}~(i). Moreover, by the solvability of $(N)_p$ and $(R)_p$ in \cite{KP}, 
which we apply in both the upper and lower half-spaces,
and the fact that the tangential gradient of the single layer potential does not jump across the boundary, we have that
\begin{align*}\begin{split}
\|\partial_\nu u_0^+\|_{H^p(\rn)}
&\approx \|\nabla_x u^+_0(\cdot,0)\|_{H^p(\rn)}
\\[4pt]
&=\,\|\nabla_x u^-_0(\cdot,0)\|_{H^p(\rn)}
\approx \|\partial_\nu u_0^-\|_{H^p(\rn)}\,,\quad  1\leq p<2+\epp\,,
\end{split}\end{align*}
where the implicit constants depend only on $p$, $n$ and ellipticity.
Combining the latter estimate with \eqref{eq5.4a}, we obtain
\eqref{eq5.2a} and \eqref{eq5.3a}.

With \eqref{eq5.2a} and \eqref{eq5.3a} in hand, we obtain
invertibility of the mappings
$$(\pm1/2)I + \widetilde{K}_{L_0}: H^p(\rn)\to H^p(\rn)\,,\,\,
{\rm and} \,\,
 S_{\!L_0}: H^p(\rn) \to \dot{H}^p_1(\rn)\,,$$
by a method of continuity argument which connects
$L_0$ to the Laplacian $-\Delta$ via the path $\tau\to L_\tau := (1-\tau)(-\Delta) +\tau L_0,\, 0\leq\tau\leq1$.
Indeed, the ``standard constants" are uniform for every 
$L_\tau$ in the family, so we have the analogue of \eqref{eq5.1a}, with $L$ and $L_0$ 
replaced by 
$L_{\tau_1}$ and $L_{\tau_2}$, 
for any $\tau_1,\tau_2 \in [0,1]$. We omit the details.

We now fix $\eps_0>0$ small enough, depending on the constants in \eqref{eq5.1a}-\eqref{eq5.3a}, so that \eqref{eq5.2a} and \eqref{eq5.3a} hold with $L$ in place of $L_0$.
Consequently, by another method of continuity argument, in which we now
connect $L$ to $L_0$, via the path $\sigma\mapsto L_\sigma:= (1-\sigma)L_0 +\sigma L,\, 0\leq\sigma\leq 1$,
and use that \eqref{eq5.1a} holds not only for $L$ and
$L_0$, but also uniformly for any intermediate pair $L_{\sigma_i}$ and $L_{\sigma_2}$,
we obtain
invertibility of the mappings
\begin{equation}\label{eq5.5a}
(\pm1/2)I + \widetilde{K}_{L}: H^p(\rn)\to H^p(\rn)\,,\,\,
{\rm and} \,\,
 S_{\!L}: H^p(\rn) \to \dot{H}^p_1(\rn)\,,
 \end{equation}
 initially in the range $1\leq p<2+\epp$.
 Again we omit the routine details.  Moreover, since \eqref{LP-3a}-\eqref{LP-3} hold in the range
 $n/(n+\alpha)<p<\pp$, we apply the extension of Sneiberg's Theorem obtained in \cite{KM}
 to deduce that the operators in \eqref{eq5.5a} are invertible in the range 
 $1-\epp<p<2+\epp$. We then apply the extension of the open mapping theorem obtained in \cite[Chapter~6]{MMMM}, which holds on quasi-Banach spaces, to deduce that the inverse operators are bounded.

At this point, we may construct solutions of $(N)_p$ and $(R)_p$ as follows.  
Given Neumann
data $g\in H^p(\rn)$, or Dirichlet
data $f\in \dot{H}^{1,p}(\rn)$, with  $1-\epp<p<2+\epp$, we set
$$u_{N}:= \mathcal{S}_L \left((1/2)I +\widetilde{K}_L\right)^{-1} g\,,
\qquad u_{R}:= \SL_L(S_{\!L}^{-1} f)$$
and observe that $u_N$ then solves $(N)_p$, and $u_R$ solves $(R)_p$,
by \eqref{LP-1}, the invertibility of $(1/2)I +\widetilde{K}_L$ and of $S_{\!L}$ (respectively),
and Corollary \ref{cor4.47} (which guarantees weak convergence to the data;  we
defer momentarily the matter of non-tangential convergence). 

Next, we consider the Dirichlet problem.  
Since the previous analysis also applies to $L^*$,
we dualize our estimates for $(\pm1/2)I +\widetilde{K}_{L^*}$ to obtain that
$(\pm1/2)I+K_{L}$ is bounded and invertible from $L^q(\rn)$ to $L^q(\rn)$, 
$2-\epp <q<\infty$, and from $\lb(\rn)$ to $\lb(\rn)$, $0\leq \beta<\epp$.
Given Dirichlet data $f$ in $L^q(\rn)$ or $\lb(\rn)$, in the stated ranges,
we then construct the solution to the Dirichlet problem by setting
$$u:= \mathcal{D}_L \Big((-1/2)I +K_{L}\Big)^{-1} f\,,$$
which solves $(D)_q$ (at least in the sense of weak convergence to the data) or $(D)_{\lb}$, by virtue of \eqref{LP-4}-\eqref{LP-6} and Corollary \ref{cor4.47}. 

We note that, at present, our solutions to $(N)_p$, $(R)_p$, and $(D)_q$
assume their boundary data in the weak sense of Corollary \ref{cor4.47}. In the next section, however, we establish some results of Fatou type (see Lemmata \ref{lemma6.1}, \ref{lemma6.2a} and \ref{lemma6.2}), which allow us to immediately deduce the stronger non-tangential and norm convergence results required here.

It remains to prove that our solutions to $(N)_p$ and $(R)_p$ are unique among the class of solutions satisfying $\NN(\nabla u)\in L^p(\rn)$, and that our solutions to $(D)_q$ (resp. $(D)_{\lb}$) are unique among the class of solutions satisfying $N_*(u)\in L^q(\rn)$ (resp. $t\nabla u \in T^\infty_2(\reu)$, if $\beta=0$, or $u \in \dot{C}^\beta(\rn)$, if $\beta>0$). We refer the reader to \cite{HMaMo} for a proof of the uniqueness for $(N)_p$, $(R)_p$ and $(D)_q$ for a more general class of operators. Also, we refer the reader to~\cite{BM} for a proof of the uniqueness for $(D)_{\lb}$ in the case $\beta>0$. Finally, the uniqueness for $(D)_{\Lambda^0}=(D)_{BMO}$ follows by combining Theorem~\ref{P-bdd1} and Corollaries~\ref{r4.2} and~\ref{cor4.47} with the uniqueness result in Proposition~\ref{prop-BMO-unique} below (see also Remark~\ref{rem-BMO-Thm1.1}).
\end{proof}

We conclude this section by proving a uniqueness result for $(D)_{\Lambda^0}=(D)_{BMO}$.

\begin{proposition}\label{prop-BMO-unique}
Suppose that the hypotheses of Theorem \ref{th2} hold. If $Lu=0$ in~$\reu$ with
\begin{align}\label{eqb1}
\sup_{t>0}\|u(\cdot,t)\|_{BMO(\rn)} & <\infty,\\[4pt]
\label{eq2}
\|u\|_{BMO(\reu)} & <\infty,
\end{align}
and $u(\cdot,t)\to 0$ in the weak* topology on $BMO(\rn)$
as $t\to 0^+$, then $u= 0$ in~$\reu$ in the sense of $BMO(\rn)$. The analogous results hold for $L^*$ and in the lower half-space.
\end{proposition}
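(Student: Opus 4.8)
The plan is to deduce from the hypotheses that, for every fixed $t>0$, the function $u(\cdot,t)$ is constant in $x$; since a constant is the zero element of $BMO$, this is exactly the assertion ``$u=0$ in the sense of $BMO(\rn)$'', and the value of the constant need not be identified separately. The mechanism is a reproducing identity obtained by testing $u$ against the Green function for $L^*$ with a single interior pole: its vanishing Dirichlet trace removes the (badly behaved) co-normal boundary term of $u$, while the pole records the value $u(X_0)$. The duality $BMO=(H^1)^*$, legitimate on $u(\cdot,t)$ by \eqref{eqb1}, together with the weak$^*$ convergence to $0$, then forces the reproduced value to be independent of the pole.

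First I would record the interior consequences of \eqref{eq2}: since $Lu=0$, Caccioppoli's inequality and Moser's estimate \eqref{eq2.M}, together with John--Nirenberg applied to \eqref{eq2}, give $|\nabla u(x,t)|\lesssim t^{-1}\|u\|_{BMO(\reu)}$, at most logarithmic growth of $u$ in $|x|$ and in $t$, and -- by the usual Whitney-cube summation -- the Carleson bound $\|t\,\nabla u\|_{T^\infty_2(\reu)}\lesssim\|u\|_{BMO(\reu)}$. Next, for $X_0=(x_0,t_0)\in\reu$, I would set $v:=G^*_L(\cdot\,;X_0)=\overline{G_L(X_0;\cdot)}$, the half-space Green function for $L^*$ with pole $X_0$, built by the layer-potential method as $G_L(X_0;Y)=E(X_0,Y)-w(Y)$ with $w$ the solution of the $\dot{C}^\beta$-Dirichlet problem for $L$ with data $E(X_0,(\cdot,0))\in\dot{C}^\beta(\rn)$ (admissible by \eqref{G-est3} and Theorem~\ref{th2}). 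Then $L^*v=\delta_{X_0}$ in $\reu$, $v$ vanishes on $\rn\times\{0\}$, and $v,\nabla v$ obey the standard pointwise bounds -- with Poisson-kernel-type decay in $x$ and in $t$, one power better for $\nabla v$ -- coming from \eqref{G-est1}--\eqref{G-est3} and the decay of $w$; in particular $P:=\partial_{\nu^*}v(\cdot,0)$ lies in $L^1(\rn)$ and decays fast enough that, for a fixed $\varphi_0\in C^\infty_0(\rn)$ with $\int\varphi_0=1$, the mean-zero function $P-\kappa\varphi_0$ lies in $H^1(\rn)$, where $\kappa:=\int_\rn P$.

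The heart of the argument is the flux identity: applying Green's second identity to $(u,v)$ on $(\rn\times(\eps,T))\setminus B(X_0,\rho)$ -- the spatial-infinity contributions vanishing because $v,\nabla v$ decay against the slow growth of $u,\nabla u$, and the sphere about $X_0$ producing $u(X_0)$ as $\rho\to0$ -- one obtains $u(X_0)=\mathcal G(\eps)-\mathcal G(T)$ for $0<\eps<t_0<T$, where $\mathcal G(t):=\int_\rn\big(u(x,t)\,\overline{\partial_{\nu^*}v(x,t)}-\overline{v(x,t)}\,\partial_\nu u(x,t)\big)\,dx$. A divergence-theorem computation across the level $t=t_0$ of the pole shows that $\mathcal G$ is level-independent on $(0,t_0)$ and on $(t_0,\infty)$, that $\int_\rn\partial_{\nu^*}v(\cdot,T)=0$ for $T>t_0$, and that $\kappa=\pm1$ is a universal constant (alternatively one may invoke Corollary~\ref{r4.2}). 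Letting $T\to\infty$, the decay of $v,\nabla v$ at height $T$ and re-centering $u(\cdot,T)$ by its average over $B(0,T)$ give $\mathcal G(T)\to0$; letting $\eps\to0^+$, and using $\partial_{\nu^*}v(\cdot,\eps)\to P=(P-\kappa\varphi_0)+\kappa\varphi_0$, the $H^1$-piece pairs against $u(\cdot,\eps)$ to $0$ by the weak$^*$ convergence $u(\cdot,\eps)\to0$, so that -- granting the boundary term discussed below tends to $0$ -- $\mathcal G(\eps)\to\kappa\,\rho_0$ with $\rho_0:=\lim_{\eps\to0^+}\int_\rn u(x,\eps)\varphi_0(x)\,dx$ (the existence of this limit being forced by the identity, and its value manifestly independent of $X_0$). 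Hence $u(X_0)=\kappa\,\rho_0$ for every $X_0\in\reu$: $u$ is constant, i.e.\ $u=0$ in the sense of $BMO(\rn)$. The reasoning for $L^*$ and for the lower half-space is identical.

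The step I expect to be the main obstacle is the claim that $\int_\rn\overline{v(x,\eps)}\,\partial_\nu u(x,\eps)\,dx\to0$ as $\eps\to0^+$. Here $\partial_\nu u(\cdot,\eps)$ is controlled only by $\|u\|_{BMO(\reu)}/\eps$ pointwise, while $\int_\rn|v(\cdot,\eps)|\lesssim\eps$, so every crude size estimate is exactly $O(1)$, never $o(1)$ -- the logarithmic failure characteristic of the $BMO$ endpoint. To gain the missing decay one must keep the cancellation: integrate by parts using $Lu=0$ to replace $\int_\rn\overline{v(\cdot,\eps)}\,\partial_\nu u(\cdot,\eps)$ by $\iint_{\rn\times(0,\eps)}\langle A\nabla u,\nabla\overline{v}\rangle$, exploit that the tangential part of $\nabla v$ is $O(\eps)$ near the boundary whereas the normal part pairs with $\partial_t u$, and then balance the Carleson bound for $t\,\nabla u$ from the first step against the square-function estimate \eqref{eqA1} governing $\nabla v=\nabla\SL_{L^*}(\cdot)$, dyadically in $\eps$. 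This is where the $L^2$ theory, and ultimately the smallness $\|A-A_0\|_\infty<\eps_0$ -- permitting comparison with the real-symmetric model $L_0$, for which the relevant elliptic-measure/$A_\infty$ estimates are classical -- enter most essentially, and it is the point demanding the greatest care.
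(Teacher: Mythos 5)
Your route (a Green-function/flux identity reproducing $u(X_0)$) is genuinely different from the paper's, but it has gaps that I do not think can be closed with the tools at hand. The most concrete one is in your very first step: the bound $\|t\nabla u\|_{T^\infty_2(\reu)}\lesssim\|u\|_{BMO(\reu)}$ does \emph{not} follow from Caccioppoli plus ``the usual Whitney-cube summation.'' For a Whitney box $Q^*$ at scale $2^{-k}\ell(Q_0)$ inside $R_{Q_0}$, Caccioppoli and \eqref{eq2} give $\iint_{Q^*}|\nabla u|^2\,t\,dx\,dt\lesssim 2^{-kn}|Q_0|^{0}\ell(Q_0)^n\cdot 2^{-kn}\cdot 2^{kn}\dots$ — more simply, each dyadic generation contributes $\approx|Q_0|\,\|u\|_{BMO(\reu)}^2$, and the sum over generations diverges logarithmically. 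Indeed the paper's Remark~\ref{rem-BMO-Thm1.1} records precisely that the implication runs the other way (Carleson $\Rightarrow$ \eqref{eq2}, via a Poincar\'e inequality), and deliberately avoids assuming \eqref{eqb3}. Since your treatment of the boundary term $\int_{\rn}\overline{v(\cdot,\eps)}\,\partial_\nu u(\cdot,\eps)$ — which you correctly identify as the crux, and for which every crude estimate is only $O(1)$ — leans on exactly this Carleson bound, that step is not merely ``demanding care'' but currently unsupported. A second, independent problem is that the pointwise ``Poisson-kernel-type'' decay you assume for $v=G^*_L(\cdot\,;X_0)$, $\nabla v$, and $P=\partial_{\nu^*}v(\cdot,0)$ (e.g.\ $\int_{\rn}|v(\cdot,\eps)|\lesssim\eps$, and $P-\kappa\varphi_0\in H^1$) requires boundary Lipschitz-type vanishing and pointwise kernel bounds for the Green function; for complex $L^\infty$ coefficients the available theory (solvability of $(D)_{\dot C^\beta}$, DG/N) only yields H\"older vanishing $|v(y,s)|\lesssim s^\beta$ and $L^2$-averaged gradient bounds, which is not enough for your limiting arguments at $\eps\to0$ and $T\to\infty$.

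For contrast, the paper's proof avoids the Green function entirely: it translates the solution upward, $u_\eps:=u(\cdot,\cdot+\eps)$, shows via Meyers' mean-oscillation criterion (using \eqref{eq2.DGN} at small scales and \eqref{eq2} at scales $\gtrsim\eps$) that $u_\eps\in\dot C^\beta(\overline{\reu})$, and invokes the already-established uniqueness for $(D)_{\dot C^\beta}$ to write $u(\cdot,t+\eps)=\pt f_\eps$ with $f_\eps=u(\cdot,\eps)$. Then \eqref{eqb1} gives weak* compactness of $\{f_\eps\}$ in $BMO$, the uniform $H^1$-boundedness of $\adj(\pt)$ from Theorem~\ref{P-bdd1} lets one pass to the limit to get $u(\cdot,t)=\pt f$, and Corollary~\ref{cor4.47}(vi) plus the hypothesis of weak* convergence to $0$ forces $f=0$. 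If you want to salvage your approach, you would need to either prove the Carleson estimate \eqref{eqb3} from the hypotheses (which, per the remark above, is not automatic) or find a different cancellation in the boundary term; the translation-plus-compactness argument sidesteps both issues.
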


\begin{remark}\label{rem-BMO-Thm1.1} We note that \eqref{eq2} follows from the Carleson
measure estimate 
\begin{equation}\label{eqb3}
\sup_Q\frac1{|Q|}\dint_{R_Q} |\nabla u(x,t)|^2\, t \,dx dt < \infty\,,
\end{equation}
by the Poincar\'e inequality of \cite{HuS}, but we will not make explicit use
of \eqref{eqb3} in the proof of Proposition~\ref{prop-BMO-unique}.
\end{remark}

\begin{proof}
For each $\eps>0$, we set $u_\eps(x,t):=u(x,t+\eps)$ and $f_\eps:= u_\eps(\cdot,0)=u(\cdot,\eps)\,$. First, note that $u_\eps\in \dot{C}^\beta(\overline{\reu})$ for $0<\beta\leq\alpha$, with a bound that depends on $\eps$, where $\alpha$ is the De Giorgi/Nash exponent for $L$ in \eqref{eq2.DGN}.  To see this, we use the ``mean oscillation" characterization of $\dot{C}^\beta$ due to N. Meyers (see \eqref{eq4.45}). In particular, for $(n+1)$-dimensional boxes $I$ with side length $\ell(I)\leq \eps/2$, we use the DG/N estimate \eqref{eq2.DGN}, and for 
boxes with $\ell(I) \geq \eps/2$, we use \eqref{eq2}. We omit the routine details. By the uniqueness of $(D)_{\lb}$ for $\beta>0$ (see~\cite{BM}), we must then have
\begin{equation}\label{epseqn}
u_\eps(\cdot,t) = \pt f_\eps :=\mathcal{D}_L \big((-1/2)I +K_{L}\big)^{-1}f_\eps,\qquad\forall\,\eps>0\,.
\end{equation}

Next, by \eqref{eqb1}, we have $\sup_{\varepsilon>0} \|f_\varepsilon\|_{BMO(\rn)} < \infty$, and so there exists a subsequence $f_{\varepsilon_k}$ converging in the weak$^*$ topology on $BMO(\rn)$ to some $f$ in $BMO(\rn)$.
Let $g$ denote a finite linear combination of $H^1$-atoms, and for each $t>0$, 
set $g_t := \adj(\pt) g$, where $\adj$ denotes the $n$-dimensional Hermitian adjoint.
Let $\langle \cdot,\cdot \rangle$ denote the (complex-valued) dual pairing between $BMO(\rn)$ and $H^1_{\mathrm{at}}(\rn)$. Then, since $\adj(\pt)$ is bounded on $H^1(\rn)$, uniformly in $t>0$, by Theorem~\ref{P-bdd1}, we have
\begin{align*}
\int_{\mathbb{R}^n} (\pt f)\,\overline{g} &= \langle f\,,\, g_t \rangle = \lim_{k \to \infty} \langle f_{\eps_k}\,,\, g_t \rangle\\
&= \lim_{k \to \infty} \int_{\mathbb{R}^n}(\pt f_{\varepsilon_k}) \,\overline{g} 
= \lim_{k \to \infty} \int_{\mathbb{R}^n} u(\cdot,t+\varepsilon_k)\,\overline{g}
=\int_{\mathbb{R}^n} u(\cdot,t)\, \overline{g}\,,
\end{align*}
where in the next-to-last step we used~\eqref{epseqn}, and in the last step we used the DG/N estimate \eqref{eq2.DGN}, and the fact that $g$ is a finite linear combination of atoms.
Since $g$ was an arbitrary element of a dense subset of $H^1(\rn)$, this shows that $u(\cdot,t) = \pt f\,$. 

Now, since $u(\cdot,t) =\pt f$ for some $f$ in $BMO(\rn)$, by Corollary~\ref{cor4.47}, we have $u(\cdot,t)\to f$ in the weak* topology as $t\rightarrow 0^+$.  On the other hand, we assumed that $u(\cdot,t)\to 0$ in the weak* topology, thus $f=0$, and so $u(\cdot,t) = \pt f\, = 0$ in the sense of $BMO(\rn)$.
\end{proof}

\section{Boundary behavior of solutions}\label{s6}
\setcounter{equation}{0}
In this section, we present some a priori convergence results of ``Fatou-type", 
which show that Theorem \ref{th2} is optimal, in the sense that, necessarily, the data must belong
to the stated space, in order to obtain the 
desired quantitative estimate for the  
solution or its gradient.  The results also show that 
in some cases, our solutions enjoy
convergence to the data in a stronger sense than that provided by Corollary \ref{cor4.47}.
The results are contained in three lemmata. The first two results below are for the Neumann and Regularity problems.

\begin{lemma}\label{lemma6.1} Let $n/(n+1)<p<\infty$. Suppose that $L$ and $L^*$ satisfy the standard assumptions. If $Lu=0$ in $\RR^{n+1}_\pm$ and $\widetilde{N}_*^\pm(\nabla u) \in L^p(\RR^n)$, then the co-normal derivative $\partial_\nu u(\cdot,0)$ exists in the variational sense and belongs to $H^p(\RR^n)$, i.e., there exists a unique $g\in H^p(\RR^n)$, and we set $\partial_\nu u(\cdot,0):=g$, with 
\begin{equation}\label{eqb2}
\|g\|_{H^p(\rn)}\leq C\, \|\NN^\pm(\nabla u)\|_{L^p(\rn)}\,,
\end{equation}
such that
\begin{equation}\label{eq5.2}
\int_{\RR^{n+1}_\pm} A\nabla u\cdot \nabla\Phi\, dX = \pm\langle g\,, \Phi (\cdot,0)\rangle\,,\qquad\forall\, \Phi \in C^\infty_0(\ree)\,,
\end{equation}
where $\langle g\,, \Phi (\cdot,0)\rangle :=\int_{\rn} g(x)\, \Phi(x,0)\, dx$, if $p\geq 1$, and 
$\langle g\,, \Phi (\cdot,0)\rangle$ denotes the usual pairing of the distribution $g$ with the test function
$\Phi(\cdot,0)$, if $p<1$. Moreover, there exists a unique $f\in \dot{H}^{1,p}(\rn)$, and we set $u(\cdot,0):= f$, with 
\begin{equation}\label{eq6.3*}
\|f\|_{\dot{H}^{1,p}(\rn)}\leq 
C\|\widetilde{N}_*^\pm(\nabla u)\|_{L^p(\rn)}\,,
\end{equation} 
such that $u\to f$ non-tangentially.
\end{lemma}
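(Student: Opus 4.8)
The plan is to deduce the statement from its $L^2$ counterpart, Lemma~\ref{l4.1}, by a vertical translation together with a limiting argument; the real content lies in obtaining uniform estimates as the translation parameter tends to $0$. I would work in $\reu$ (the lower half-space being identical), and abbreviate $M:=\widetilde{N}_*^+(\nabla u)\in L^p(\rn)$. For $\tau>0$ put $u^\tau(x,t):=u(x,t+\tau)$, a solution of $Lu^\tau=0$ in $\{t>-\tau\}\supset\overline{\reu}$. By Proposition~\ref{propslice}, $\nabla u$ has a well-defined restriction $\nabla u(\cdot,\tau)\in L^2_{\mathrm{loc}}(\rn)$, so $\partial_\nu u(\cdot,\tau):=-e_{n+1}\cdot A\nabla u(\cdot,\tau)$ is meaningful. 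Moser's estimate \eqref{eq2.M} gives $|\nabla u(x,\tau)|\le M(x')$ whenever $|x-x'|<\tau$ (after the usual adjustment of apertures), and translating the averaging ball upward shows $\widetilde{N}_*^+(\nabla u^\tau)\le M$ pointwise; integrating the first bound over $x'\in\Delta(x,\tau)$ and then over $x$ yields the \emph{uniform} slice bound $\|\nabla u(\cdot,\tau)\|_{L^p(\rn)}\lesssim\|M\|_{L^p(\rn)}$. Since $u^\tau$ is a genuine weak solution across $\{t=0\}$, the Green identity reads
\begin{equation*}
\int_{\reu}A\nabla u^\tau\cdot\nabla\Phi\,dX=\int_{\rn}\partial_\nu u(x,\tau)\,\Phi(x,0)\,dx\,,\qquad\forall\,\Phi\in C^\infty_0(\ree)\,.
\end{equation*}

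Next I would pass to the limit $\tau\to0^+$. When $1<p<\infty$ this is routine: reflexivity of $L^p$ gives weak subsequential limits $\partial_\nu u(\cdot,\tau_k)\to g$ and $\nabla_x u(\cdot,\tau_k)\to\nabla_x f$ in $L^p$; the translates satisfy $\nabla u^{\tau_k}\to\nabla u$ in $L^2_{\mathrm{loc}}$, so Green's identity passes to the limit and gives \eqref{eq5.2}, which determines $g$ uniquely, while weak lower semicontinuity of the norms gives \eqref{eqb2} and \eqref{eq6.3*}. When $n/(n+1)<p\le1$ the same scheme works \emph{provided} one has the uniform bound
\begin{equation*}
\sup_{\tau>0}\Big(\|\partial_\nu u(\cdot,\tau)\|_{H^p(\rn)}+\|\nabla_x u(\cdot,\tau)\|_{H^p(\rn)}\Big)\lesssim\|M\|_{L^p(\rn)}\,:
\end{equation*}
granting this, every subsequential limit of $\partial_\nu u(\cdot,\tau)$ in $\s'(\rn)$ satisfies \eqref{eq5.2} by the same passage to the limit in Green's identity, hence is uniquely determined, so $\partial_\nu u(\cdot,\tau)\to g$ in $\s'(\rn)$, and the Fatou property of $H^p$ with respect to convergence in $\s'$ yields $g\in H^p(\rn)$ with \eqref{eqb2}; likewise $\nabla_x u(\cdot,\tau)\to\nabla_x f$ in $\s'$ with $f\in\dot{H}^{1,p}(\rn)$ and \eqref{eq6.3*}.

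The uniform $H^p$ bound displayed above is the heart of the matter, and I expect its verification, especially for the normal component, to be the main obstacle. I would obtain it by the Calder\'on--Zygmund/Whitney decomposition of Kenig and Pipher \cite{KP}. Fix $\tau$. For each dyadic $\lambda$, decompose the level set $\{M>\lambda\}$ into Whitney cubes $\{Q_j\}$; on the complementary good set, $\nabla u(\cdot,\tau)$ is pointwise $\lesssim\lambda$, which furnishes an $L^2$-normalized piece with the correct $H^p$-scaling, while on each $Q_j$, Caccioppoli's inequality combined with the slice bound $\fint_{Q_j}|\nabla u(\cdot,\tau)|^2\lesssim\lambda^2$ produces an $H^p$-molecule adapted to a fixed dilate of $Q_j$ with coefficient $\approx\lambda|Q_j|^{1/p}$; since $\sum_j\lambda^p|Q_j|\lesssim\lambda^p|\{M>\lambda\}|$, summing over dyadic $\lambda$ bounds everything by $\|M\|_{L^p}^p$. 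For the tangential gradient the resulting molecules are curl-free, so no cancellation condition need be checked (this is precisely the regular-atom description of $\dot{H}^{1,p}$); for the normal component $\partial_\nu u(\cdot,\tau)$ the required moment estimate $\big|\int_Q\partial_\nu u(\cdot,\tau)\big|\lesssim(\text{good scale})$ comes from the divergence structure, the co-normal flux across a face above $Q$ being, since $\dv(A\nabla u)=0$, governed by the flux through the far side at the good scale.

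Finally, the non-tangential convergence $u\to f$ follows from the $\dot{H}^{1,p}$-convergence of the slices $u(\cdot,\tau)\to f$, the interior Caccioppoli/Moser estimates, and the now-standard argument (as in \cite{KP}, \cite{AAAHK}) estimating $u(y,t)-\fint_{\Delta(x,t)}f$ by an integral of $\nabla u$ along a path to the boundary. Uniqueness of $f$ in $\dot{H}^{1,p}(\rn)$ holds because $\nabla f$ is determined and the non-tangential limit fixes the additive constant, while uniqueness of $g$ in $H^p(\rn)$ is immediate from \eqref{eq5.2}, since $\{\Phi(\cdot,0):\Phi\in C^\infty_0(\ree)\}=C^\infty_0(\rn)$ determines $g$ as a tempered distribution.
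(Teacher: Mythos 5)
Your overall scheme (translate by $\tau$, apply the $L^2$ theory to $u^\tau$, pass to the limit) is the standard route for $1<p<\infty$ and is essentially what the paper does in that range, by citing \cite[Theorem 3.1]{KP}. But there are two genuine problems. First, you invoke Moser's estimate \eqref{eq2.M} to get the pointwise bound $|\nabla u(x,\tau)|\leq M(x')$. Under the standing assumptions Moser applies to \emph{solutions}, hence to $u$ and (by $t$-independence) to $\partial_t u$, but not to $\nabla_x u$: the full gradient of a solution of an operator with merely bounded measurable coefficients need not be locally bounded, which is precisely why $\widetilde{N}_*$ rather than $N_*$ is used for gradients throughout the paper. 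The related claim $\widetilde{N}_*^+(\nabla u^\tau)\leq M$ also fails pointwise, because for $t\ll\tau$ the translated averaging ball $B((x,t+\tau),t/4)$ has radius degenerating relative to its height, and Lemma~\ref{L-cones} does not cover a vanishing relative radius. The correct uniform slice bound $\sup_\tau\|\nabla u(\cdot,\tau)\|_{L^p}\lesssim\|\widetilde{N}_*(\nabla u)\|_{L^p}$ is true for $1<p<2+\epp$, but it is proved (see \eqref{eq6.4*}) by Caccioppoli on horizontal slices plus duality, not by a pointwise domination; the same false pointwise bound also silently enters the ``good set'' part of your stopping-time construction.

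Second, and more seriously, the uniform bound $\sup_{\tau>0}\big(\|\partial_\nu u(\cdot,\tau)\|_{H^p}+\|\nabla_x u(\cdot,\tau)\|_{H^p}\big)\lesssim\|\widetilde{N}_*(\nabla u)\|_{L^p}$ for $n/(n+1)<p\leq1$, which you correctly identify as the heart of the matter, is not established. For the conormal component the entire difficulty is the moment estimate $\big|\int_{Q_j}\partial_\nu u(\cdot,\tau)\big|\lesssim\lambda|Q_j|$ on the Whitney cubes: rewriting it as a flux through the lateral and top faces of a box over $Q_j$ only trades it for $L^1$ control of $\nabla u$ on lower-dimensional sets, which is again available only in an averaged sense, so ``comes from the divergence structure'' is not a proof. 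The paper avoids the whole translation/molecular apparatus for $p\leq 1$: the trace $f$ is obtained from the pointwise estimate $|f(x)-f(y)|\leq C|x-y|\big(\widetilde{N}_*(\nabla u)(x)+\widetilde{N}_*(\nabla u)(y)\big)$ together with the characterization of $\dot{H}^{1,p}$ in \cite{KoS}, while $g$ is defined directly as a tempered distribution by $\langle g,\varphi\rangle:=\iint_{\reu}A\nabla u\cdot\nabla\Phi$ for any extension $\Phi$ of $\varphi$, and one proves the pointwise bound $M_\varphi(g)\lesssim\big(M\big(\widetilde{N}_*(\nabla u)^{n/(n+1)}\big)\big)^{(n+1)/n}$ using the solid-integral estimate of Lemma~\ref{L-improve}; membership in $H^p$ then follows from the maximal function characterization. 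I would encourage you to either adopt that route or supply complete proofs of the slice bound and the moment estimates, since as written neither is available.
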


\begin{lemma}\label{lemma6.2a}  Suppose that $L$ and $L^*$ satisfy the standard assumptions. Suppose also that $Lu=0$ in $\RR^{n+1}_\pm$ and $\widetilde{N}_*^\pm(\nabla u) \in L^p(\RR^n)\,$ for some $n/(n+1)<p<\infty$. There exists $\epsilon>0$, depending only on the standard constants, such that in the case $1<p<2+\epp$, one has
\begin{equation}\label{eq6.4*}
\sup_{\pm t>0} \|\nabla u(\cdot,t)\|_{L^p(\rn)} \leq C_p \|\NN^\pm(\nabla u)\|_{L^p(\rn)}\,,
\end{equation}
\begin{equation}\label{eq6.5*}
-e_{n+1} \cdot A\nabla u(\cdot,t) \to \partial_\nu u(\cdot,0) \textrm{ weakly in } L^p \textrm{ as } t\to 0^\pm\,,
\end{equation}
\begin{equation}\label{eq6.6*}
\nabla_x u(\cdot,t) \to \nabla_x u(\cdot,0) \textrm{ weakly in } L^p \textrm{ as } t\to 0^\pm\,,
\end{equation}
where $\partial_\nu u(\cdot,0) \in L^p(\rn)$ and $u(\cdot,0)\in \dot{L}^p_1(\rn)$ denote the variational co-normal and non-tangential boundary trace, respectively, defined in Lemma \ref{lemma6.1}.

Also, in the case $n/(n+1)<p\leq 1$, if there exists $h\in \dot{H}^{1,p}(\rn)$ such that $\nabla_x u(\cdot,t) \to \nabla_x h$ in the sense of tempered distributions, then $u(\cdot,0)=h$ in the sense of $\dot{H}^{1,p}(\rn)$, where $u(\cdot,0)\in \dot{H}^{1,p}(\rn)$ denotes the non-tangential boundary trace defined in Lemma \ref{lemma6.1}.
\end{lemma}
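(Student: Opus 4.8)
The plan is to reduce everything to the boundary trace information of Lemma~\ref{lemma6.1}, the interior slice estimate of Proposition~\ref{propslice}, and the boundedness of the Hardy--Littlewood maximal operator $M$ on $L^p$ for $p>1$. Throughout I work in $\reu$; the lower half-space is symmetric. By Lemma~\ref{lemma6.1}, $u$ has a non-tangential trace $f:=u(\cdot,0)\in\dot H^{1,p}(\rn)$ and a variational co-normal derivative $g:=\partial_\nu u(\cdot,0)\in H^p(\rn)$ with $\|\nabla_x f\|_{L^p(\rn)}+\|g\|_{H^p(\rn)}\lesssim\|\NN^+(\nabla u)\|_{L^p(\rn)}$. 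The one pointwise fact I would isolate at the outset is that, since $A$ is $t$-independent, $\partial_t u$ solves $L(\partial_t u)=0$ in $\reu$, so Moser's estimate~\eqref{eq2.M} on $B((x,s),s/8)$ (whose double lies in $\reu$) together with $(x,s)\in\Gamma(x)$ gives $|\partial_t u(x,s)|\lesssim\NN^+(\nabla u)(x)$ for a.e.\ $x\in\rn$ and every $s>0$. Hence $s\mapsto u(x,s)$ is Lipschitz on $[0,\infty)$ for a.e.\ $x$, with $u(x,0^+)=f(x)$ (the vertical ray lies in $\Gamma(x)$), and
\begin{equation*}
|u(x,t)-f(x)|=\Big|\int_0^t\partial_s u(x,s)\,ds\Big|\lesssim t\,\NN^+(\nabla u)(x),\qquad t>0.
\end{equation*}

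The core of \eqref{eq6.4*} is the local slice bound: for $t_0>0$ fixed and every cube $Q$ with $\ell(Q)=t_0/2$,
\begin{equation*}
\Big(\fint_Q|\nabla u(x,t_0)|^2\,dx\Big)^{1/2}\;\lesssim\;\fint_{CQ}\NN^+(\nabla u)+\Big(\fint_{CQ}|\nabla_x f|^{n/(n+1)}\Big)^{(n+1)/n}=:A_Q,
\end{equation*}
with $C$ a dimensional dilation constant. To obtain it, apply Proposition~\ref{propslice} to the solution $u-c_Q$ at height $t_0$ and then the self-improvement~\eqref{eq2.Mr} of Moser's estimate, reaching $\fint_Q|\nabla u(\cdot,t_0)|^2\lesssim t_0^{-2}\big(\fint\!\!\fint_{CQ\times(t_0/2,3t_0/2)}|u-c_Q|\big)^2$; then choose $c_Q:=\fint_{CQ}f$ and split $u-c_Q=(u(\cdot,t)-f)+(f-c_Q)$, the first summand contributing $\lesssim t_0\fint_{CQ}\NN^+(\nabla u)$ by the pointwise bound above and the second $\lesssim\ell(Q)\big(\fint_{CQ}|\nabla_x f|^{n/(n+1)}\big)^{(n+1)/n}$ by the Poincar\'e--Sobolev inequality with the subunit exponent $n/(n+1)$ (which is what legitimizes the argument for every $p>n/(n+1)$, rather than just $p\geq 2_*$). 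Now sum over a tiling of $\rn$ by cubes of side $t_0/2$. Since $A_Q\lesssim\big(M(\NN^+(\nabla u))+(M(|\nabla_x f|^{n/(n+1)}))^{(n+1)/n}\big)(x)$ for $x\in Q$, when $1<p\leq 2$ H\"older's inequality gives $\int_Q|\nabla u(\cdot,t_0)|^p\leq|Q|A_Q^p$, and boundedness of $M$ on $L^p$ (this is exactly where $p>1$ is used) and on $L^{p(n+1)/n}$ completes the sum; when $2<p<2+\ep$ one first upgrades the left-hand side of the slice bound to an $L^p$ average over $Q$ via the self-improving higher integrability of $\nabla u$ on horizontal slices, which is precisely what caps the range at $2+\ep$, and then sums as before. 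This proves \eqref{eq6.4*}.

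Granting \eqref{eq6.4*}, the convergence assertions are soft. As $\{-e_{n+1}\cdot A\nabla u(\cdot,\delta)\}_{\delta>0}$ is bounded in $L^p(\rn)$, it suffices to test against $\Phi(\cdot,0)$, $\Phi\in C^\infty_0(\ree)$, which span a dense subspace of $L^{p'}(\rn)$. For such $\Phi$, integrating by parts over $\{t>\delta\}$ gives $\int_{\rn}(-e_{n+1}\cdot A\nabla u(\cdot,\delta))\,\Phi(\cdot,\delta)\,dx=\int_{\{t>\delta\}}A\nabla u\cdot\nabla\Phi\,dX$, which tends to $\langle g,\Phi(\cdot,0)\rangle$ as $\delta\to0^+$ by \eqref{eq5.2} and dominated convergence (using $\NN^+(\nabla u)\in L^p$), while $|\int_{\rn}(-e_{n+1}\cdot A\nabla u(\cdot,\delta))(\Phi(\cdot,\delta)-\Phi(\cdot,0))\,dx|\lesssim\delta\,\|\nabla u(\cdot,\delta)\|_p\to0$ by \eqref{eq6.4*}; hence \eqref{eq6.5*}. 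Likewise $\int_{\rn}\nabla_x u(\cdot,\delta)\cdot\vec\Phi=-\int_{\rn}u(\cdot,\delta)\,\dv_x\vec\Phi\to-\int_{\rn}f\,\dv_x\vec\Phi$ because $u(\cdot,\delta)\to f$ in $L^p_{\mathrm{loc}}(\rn)$ by the first-paragraph bound, giving \eqref{eq6.6*}. Finally, when $n/(n+1)<p\leq1$, that same convergence $u(\cdot,t)\to f$ in $L^p_{\mathrm{loc}}(\rn)$ forces $\nabla_x u(\cdot,t)\to\nabla_x f$ in $\s'(\rn)$, so the hypothesis $\nabla_x u(\cdot,t)\to\nabla_x h$ in $\s'(\rn)$ yields $\nabla_x f=\nabla_x h$, i.e.\ $u(\cdot,0)=f=h$ in $\dot H^{1,p}(\rn)$.

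The step I expect to be the main obstacle is the tangential part of \eqref{eq6.4*}: unlike $\partial_t u$, the horizontal gradient $\nabla_x u$ is not a solution and cannot be dominated pointwise by $\NN^+(\nabla u)$, so it must instead be traded---via Proposition~\ref{propslice} followed by Poincar\'e--Sobolev---for the boundary datum $\nabla_x f$ plus a purely vertical oscillation term. The two devices that make this trade succeed on the whole range $1<p<2+\ep$ are the $L^1$-average form of Moser's estimate (permitting the subunit Sobolev exponent $n/(n+1)$) and the self-improving $L^{2+\ep}$ integrability of solution gradients on slices (needed only for $p>2$, and responsible for the upper endpoint).
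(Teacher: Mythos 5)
Your route to \eqref{eq6.4*} and to the weak convergence statements \eqref{eq6.5*}--\eqref{eq6.6*} is workable but genuinely different from the paper's: the paper dualizes against $\vec{\psi}\in C_0^\infty$, tiles $\rn$ by dyadic cubes $Q$ with $\ell(Q)\approx t$, and invokes the Caccioppoli-type estimate on horizontal slices from \cite[(2.2)]{AAAHK} to pass directly from $\fint_Q|\nabla u(\cdot,t)|^p$ to the solid average $\fint\!\!\fint_{Q^*}|\nabla u|^2$, which is dominated pointwise on $Q$ by $\NN(\nabla u)^2$; no boundary trace, Poincar\'e inequality or maximal function enters, and the restriction $p<2+\epp$ comes solely from that slice estimate. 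Your reconstruction via Proposition~\ref{propslice}, Moser, the vertical FTC bound and a Poincar\'e inequality also closes for $1<p<2+\epp$, \emph{provided} you use the ordinary $L^1$--$L^1$ Poincar\'e inequality: the subunit-exponent inequality $\fint_Q|f-f_Q|\lesssim\ell(Q)\bigl(\fint_Q|\nabla f|^{n/(n+1)}\bigr)^{(n+1)/n}$ that you invoke is \emph{false} for general Sobolev functions (concentrate $\nabla f$ on a slab of width $\delta$ and let $\delta\to 0$), so it cannot ``legitimize the argument for every $p>n/(n+1)$''. Since \eqref{eq6.4*} is only asserted for $p>1$, taking $r=1$ and using boundedness of $M$ on $L^p$ repairs this; alternatively, the pointwise Hardy--Sobolev bound $|f(x)-f(y)|\lesssim|x-y|\,(\NN(\nabla u)(x)+\NN(\nabla u)(y))$ established in the proof of Lemma~\ref{lemma6.1} controls the oscillation of $f$ over $CQ$ by $\ell(Q)\fint_{CQ}\NN(\nabla u)$ with no Poincar\'e inequality at all.

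The genuine gap is in the final claim, for $n/(n+1)<p\leq 1$. You assert that $u(\cdot,t)\to f$ in $L^p_{\mathrm{loc}}$ ``forces'' $\nabla_x u(\cdot,t)\to\nabla_x f$ in $\s'(\rn)$. For $p<1$ this does not follow: the pointwise bound $|u(x,t)-f(x)|\lesssim t\,\NN(\nabla u)(x)$ only provides a majorant $\NN(\nabla u)\in L^p$, which need not be locally integrable when $p<1$ (e.g.\ $|x|^{-n}$ near a point lies in $L^p$ for all $p<1$), so one cannot pair $u(\cdot,t)-f$ against a Schwartz function and pass to the limit. The paper bridges exactly this point: by Sobolev embedding, $u(\cdot,t)$ and $u(\cdot,0)$ lie uniformly in $L^q(\rn)$ with $1/q=1/p-1/n$ and $q>1$, and interpolating the pointwise bound with the trivial bound $|u(\cdot,t)-u(\cdot,0)|\leq|u(\cdot,t)|+|u(\cdot,0)|$ yields, via H\"older with suitably chosen exponents, $\int_{\rn}(u(\cdot,t)-u(\cdot,0))\psi\to 0$ for every $\psi\in\s(\rn)$; only then can one differentiate in $\s'$ and identify $\nabla_x h$ with $\nabla_x u(\cdot,0)$, first as tempered distributions and then in $H^p$. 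Without some such device the case $p<1$ does not close. (A smaller point: the integration by parts giving $\int_{\rn}(-e_{n+1}\cdot A\nabla u(\cdot,\delta))\,\Phi(\cdot,\delta)=\iint_{\{t>\delta\}}A\nabla u\cdot\nabla\Phi$ deserves justification for a weak solution; the paper obtains it by first mollifying horizontally and exploiting $t$-independence.)
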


The third and final result below is for the Dirichlet problem.
\begin{lemma}\label{lemma6.2}  Suppose that  the hypotheses of Theorem \ref{th2} hold. Let $2-\epp<p<\infty$ denote the range of well-posedness of $(D)_p$. If $Lu=0$ in $\mathbb{R}^{n+1}_\pm$ and
\begin{equation}\label{eq6.3}
\|N_*^\pm(u)\|_{L^p(\rn)}<\infty\,,
\end{equation}
then there exists a unique $f\in L^p(\rn)$, and we set $u(\cdot,0):=f$, such that
\begin{equation}\label{eq6.4}
u \to f \textrm{ non-tangentially, and } u(\cdot,t) \to f \textrm{ in } L^p(\rn)\textrm{ as } t\to 0^\pm\,.
\end{equation}
\end{lemma}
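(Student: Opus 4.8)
The plan is to show that every such $u$ is in fact a double layer potential, $u=\mathcal{D}_L h$ for some $h\in L^p(\rn)$, and then to obtain the boundary behaviour from a Fatou-type statement for $\mathcal{D}_L$. I work in $\reu$, the lower half-space being identical, and write $\mathcal{K}:=-\tfrac12 I+K_L$, which by Theorem~\ref{th2} is bounded and invertible on $L^q(\rn)$ in the stated range. The crucial first step, to be carried out \emph{before} any use of uniqueness, is the following: for every $g\in L^p(\rn)$, $\mathcal{D}_L g(\cdot,t)\to\mathcal{K}g$ non-tangentially a.e.\ and in $L^p(\rn)$ as $t\to0^+$. For $g\in L^2(\rn)$ this rests on the square function bound \eqref{eqA1}: by Fubini, $\iint_{\Gamma(x)}|\nabla\mathcal{D}_L g|^2\,t^{1-n}\,dy\,dt<\infty$ for a.e.\ $x$; bounding $|\nabla\mathcal{D}_L g|$ pointwise by its local $L^2$ average (Caccioppoli/Moser) and integrating along paths inside a slightly wider cone controls the non-tangential oscillation of $\mathcal{D}_L g$ at $x$ by a tail of this area integral, which vanishes as $t\to0$; since also $N_*(\mathcal{D}_L g)\in L^2$ by \eqref{LP-4} (at $p=2$), this produces non-tangential limits a.e., which must equal $\mathcal{K}g$ because $\mathcal{D}_L g(\cdot,t)\to\mathcal{K}g$ weakly in $L^2$ by Lemma~\ref{l4.ntjump}(iii) while dominated convergence (majorant $N_*(\mathcal{D}_L g)$) upgrades the latter to strong $L^2$ convergence. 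For general $g\in L^p$ one extends this by the usual density argument with the non-tangential oscillation functional $\mathcal{O}$ (which is subadditive, dominated by $2N_*$, and vanishes at a point exactly when the non-tangential limit exists there): choosing $g_j\in L^2\cap L^p$ with $g_j\to g$ in $L^p$, the $L^2$ case gives $\mathcal{O}(\mathcal{D}_L g_j)\equiv0$, whence by \eqref{LP-4} $\|\mathcal{O}(\mathcal{D}_L g)\|_p\le 2\|N_*(\mathcal{D}_L(g-g_j))\|_p\lesssim\|g-g_j\|_p\to0$; the limit is $\mathcal{K}g$ since $\mathcal{K}$ is bounded on $L^p$, and the $L^p$ convergence follows from dominated convergence with majorant $N_*(\mathcal{D}_L g)\in L^p$.

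With this in hand I recover the representation. Fix $\eps>0$ and set $u_\eps:=u(\cdot,\cdot+\eps)$; by interior estimates $u_\eps$ is H\"older continuous up to $\{t=0\}$ with trace $f_\eps:=u(\cdot,\eps)$, and since $|u(x,t+\eps)|\le N_*^+(u)(x)$ one has $\sup_\eps\|f_\eps\|_{L^p}\le\|N_*^+(u)\|_{L^p}=:M$, $N_*^+(u_\eps)\le N_*^+(u)$, and (dominated convergence) $u_\eps(\cdot,t)\to f_\eps$ non-tangentially and in $L^p$ as $t\to0^+$; so $u_\eps$ solves $(D)_p$ with data $f_\eps$. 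By the first step, $\mathcal{D}_L(\mathcal{K}^{-1}f_\eps)$ also attains $f_\eps$ non-tangentially and in $L^p$, so the uniqueness assertion of Theorem~\ref{th2} gives $u_\eps=\mathcal{D}_L h_\eps$ with $h_\eps:=\mathcal{K}^{-1}f_\eps$ and $\sup_\eps\|h_\eps\|_{L^p}\lesssim M$. Since $p>1$, a subsequence $h_{\eps_k}$ converges weakly in $L^p$ to some $h$; as evaluation $g\mapsto\mathcal{D}_L g(x,t)$ at a fixed interior point is a bounded linear functional on $L^p$ (by the Moser-type estimate \eqref{eq2.Mr} and \eqref{LP-4}), while $u_{\eps_k}(x,t)=u(x,t+\eps_k)\to u(x,t)$ by continuity, we conclude $u(x,t)=\mathcal{D}_L h(x,t)$ for every $(x,t)\in\reu$.

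Applying the first step once more, now to $h\in L^p(\rn)$, yields $u=\mathcal{D}_L h\to\mathcal{K}h=:f\in L^p(\rn)$ non-tangentially a.e.\ and in $L^p$, which is precisely the assertion; uniqueness of $f$ is immediate from either mode of convergence. I expect the main obstacle to be the $L^2$ instance of the first step --- turning the square function bound \eqref{eqA1} into honest a.e.\ non-tangential convergence of $\mathcal{D}_L g$ --- and it is essential to secure this \emph{first}, because a priori the layer potential solution attains its data only in the weak sense of Corollary~\ref{cor4.47}, which is too weak to drive the comparison with the shifts $u_\eps$. Everything after the Fatou statement for $\mathcal{D}_L$ is routine bookkeeping with Theorem~\ref{P-bdd1}.
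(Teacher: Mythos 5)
Your overall architecture coincides with the paper's: first establish a Fatou-type statement for $\mathcal{D}_L$ with $L^p$ data (via the dense class $L^2\cap L^p$ and the maximal bound \eqref{LP-4}), then recover $u=\mathcal{D}_Lh$ by comparing the vertical translates $u(\cdot,\cdot+\eps)$ with the layer potential solutions having data $f_\eps=u(\cdot,\eps)$, invoking uniqueness of $(D)_p$, and passing to a weak limit. The second and third stages are sound (your variant of extracting a weak limit of $h_\eps=\mathcal{K}^{-1}f_\eps$ and using that point evaluation of $\mathcal{D}_L$ is a bounded functional on $L^p$ is a harmless rearrangement of the paper's duality computation), and you correctly identify why the Fatou statement must come first.

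The gap is in the foundational $L^2$ case of your first step. You claim that the finiteness a.e.\ of the conical square function $\iint_{\Gamma(x)}|\nabla\mathcal{D}_Lg|^2\,t^{1-n}\,dy\,dt$, obtained from \eqref{eqA1} by Fubini, controls the non-tangential oscillation of $\mathcal{D}_Lg$ at $x$ ``by a tail of this area integral'' via Moser/Caccioppoli and path integration. It does not. Integrating $|\nabla\mathcal{D}_Lg|$ along a path in the cone descending from height $\tau$ to height $0$ and bounding the gradient on each Whitney box $W$ of the cone by its local $L^2$ average yields a sum of the form $\sum_{k}\ell(W_k)\bigl(\fint_{W_k^*}|\nabla\mathcal{D}_Lg|^2\bigr)^{1/2}\approx\sum_k a_k^{1/2}$, where $\sum_k a_k$ is (comparable to) the truncated area integral. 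Finiteness of $\sum_k a_k$ does not bound $\sum_k a_k^{1/2}$ (take $a_k=k^{-2}$), so the oscillation is not controlled and no almost-everywhere non-tangential limit follows from this mechanism. The true implication ``area integral finite a.e.\ $\Rightarrow$ non-tangential limits exist a.e.'' is a genuine local Fatou theorem whose proof, even for harmonic functions, requires sawtooth/good-$\lambda$ machinery not developed here; and without a.e.\ convergence on the dense class, your oscillation-functional density argument and your dominated-convergence upgrade from weak to strong $L^2$ convergence both collapse, since each presupposes pointwise a.e.\ convergence for $g\in L^2$. The paper avoids this entirely by citing \cite[Lemma 4.23]{AAAHK} for the $p=2$ case of \eqref{eq6.4} when $u=\mathcal{D}_Lh$; replacing your square-function argument by that reference repairs the proof, but as written the key analytic step is missing.
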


\begin{proof}[Proof of Lemma \ref{lemma6.1}]  We suppose that $\NN(\nabla u) \in L^p(\rn)$, 
and we seek  a variational co-normal $g\in H^p(\rn)$, and a
non-tangential limit $f \in \dot{H}^{1,p}(\rn)$, 
satisfying the bounds  \eqref{eqb2} and \eqref{eq6.3*}.  
The case $p>1$ may be obtained by following, {\it mutatis mutandi},
the proof of \cite[Theorem 3.1]{KP} (see also \cite[Lemma 4.3]{AAAHK},
stated in this paper as Lemma \ref{l4.1}, which treats the case $p=2$ by following
\cite[Theorem 3.1]{KP}). We omit the details. The case $p\leq 1$, which is a bit more problematic, is treated below.  

First, we consider the existence of the non-tangential limit $f\in \dot{H}^{1,p}(\rn)$,
assuming now that  $\NN(\nabla u) \in L^p(\rn)$
with $n/(n+1)<p\leq 1$.  In fact, following the proof of \cite[Theorem~3.1, p. 462]{KP}),
we see that the non-tangential limit $f(x)$ exists at every point $x\in\rn$ for which
$\NN(\nabla u)(x)$ is finite (thus, a.e. in $\rn$, no matter the value of $p$), 
and moreover, for any pair of points $x,y\in\rn$
at which  $\NN(\nabla u)(x)$ and $\NN(\nabla u)(y)$ are finite, we have the pointwise estimate
\[
|f(x) -f(y)| \,\leq \,C\,|x-y| \left(\NN(\nabla u)(x) \,+\,\NN(\nabla u)(y)\right)\,,
\]
where $C$ depends only on the standard constants.  Thus, by the criterion of \cite{KoS}, 
we obtain immediately that $f\in \dot{H}^{1,p}(\rn)$, with $\|f\|_{\dot{H}^{1,p}(\rn)} \lesssim \|\NN(\nabla u)\|_{L^p(\rn)}$.

Next, we consider the existence of the co-normal derivative $g\in H^p(\RR^n)$. We use $\langle \cdot,\cdot\rangle$ to denote the usual pairing of tempered distributions $\s'(\mathbb{R}^d)$ and Schwartz functions $\s(\mathbb{R}^d)$, where  $d$ may be either $n$ or $n+1$ (the usage will be clear from the context). By Lemma~\ref{L-improve}, for all $0<q \leq 2n/(n+1)$, we have
\begin{equation}\label{eq3}
\|\nabla u\|_{L^{q(n+1)/n}(\reu)} \leq C(q,n)\, \|\NN(\nabla u)\|_{L^q(\rn)}\,,
\end{equation}
and since $\NN(\nabla u)\in L^p(\rn)$, this implies that $\nabla u \in L^r(\reu)$, with $r:= p(n+1)/n >1.$
We may then define a linear functional $\Lambda=\Lambda_u \in \s'(\ree)$ by
$$\langle\Lambda,\Phi\rangle:= \iint_{\reu} A\nabla u\cdot
\nabla\Phi\,,\qquad \forall\,\Phi \in \s(\ree)\,.$$
For $\vp\in\s(\rn)$, we say that $\Phi\in \s(\ree)$ is an extension of $\vp$ if $\Phi(\cdot,0)=\vp$.
We now define a linear functional $g \in \s'(\rn)$ by setting
$$\langle g,\vp\rangle:= \langle\Lambda,\Phi\rangle\,,\qquad \forall\,\varphi \in \s(\rn)\,,$$
where $\Phi$ is any extension of $\vp$.  Since such an extension of $\varphi$ need not be unique, however, we must verify that $g$ is well-defined.  To this end, fix $\vp\in\s(\rn)$,
and let $\Phi_1,\Phi_2\in\s(\ree)$ denote any two extensions
of $\vp$.  Then $\Psi:= \Phi_1-\Phi_2\in \s(\ree),$ with $\Psi(\cdot,0)\equiv 0$, and so
$ \langle\Lambda,\Psi\rangle = 0$, by the definition of a (weak) solution.  Thus, the linear functional
$g$ is well-defined, and so $u$ has a variational co-normal $\pun(\cdot,0): = g$ in $\s'(\rn)$ satisfying ~\eqref{eq5.2}.

It remains to prove \eqref{eqb2}. For $\vp\in\s(\rn)$, we set $M_\vp f:=\sup_{t>0}|\vp_t * f|\,,$ where as usual $\vp_t(x):=t^{-n}\vp(x/t)$.  We recall that a tempered distribution $f$ belongs to
$H^p(\rn)$ if and only if $M_\vp f \in L^p(\rn)$, for some $\vp\in\s(\rn)$ with $\int_{\rn}\vp = 1$
(see, e.g., \cite[Theorem~1, p.~91]{St2}),
and we have the equivalence $\|f\|_{H^p(\rn)} \approx \|M_\vp f\|_{L^p(\rn)}\,$. We now fix $\vp \in C_0^\infty(\rn)$, with $\vp\geq0$, $\int\vp = 1$, and $\supp\vp\subset \Delta(0,1):=\{x\in\rn:|x|<1\}$, so we have
$$\|\pun\|_{H^p(\rn)}\leq C\, \|M_\vp(\pun)\|_{L^p(\rn)}\,,$$
and it suffices to show that 
\[
\|M_\vp(\pun)\|_{L^p(\rn)}  \leq C \|\NN(\nabla u)\|_{L^p(\rn)} \,.
\]
We claim that
\begin{equation}\label{eq5}
M_\vp(\pun)\leq C \left(M\left(\NN(\nabla u)\right)^{n/(n+1)}\right)^{(n+1)/n}\,,
\end{equation}
pointwise, where $M$ denotes the usual Hardy-Littlewood maximal operator.
Taking the claim for granted momentarily, we see that
\begin{equation*}
\int_{\rn}M_\vp(\pun)^p \lesssim\int_{\rn}\left(M\left(\NN(\nabla u)\right)^{n/(n+1)}\right)^{p(n+1)/n}
\lesssim \int_{\rn}\left(\NN(\nabla u)\right)^{p}\,,
\end{equation*}
as desired, since $p(n+1)/n>1$.  

It therefore remains to establish \eqref{eq5}.  To this end, we fix $x\in \rn$ and $t>0$, 
set $B:= B(x,t):=\{Y\in \ree: |Y-x|<t\}$, and fix a smooth cut-off function
$\eta_{B}\in  C_0^\infty(2B)$, with
$\eta_{B}\equiv 1$ on $B$, $0\leq\eta_{B}\leq 1,$
and $|\nabla \eta_{B}|\lesssim 1/t$.   Then
$$\Phi_{x,t}(y,s):= \eta_B(y,s)\, \vp_t(x-y)$$
is an extension of $\vp_t(x-\cdot)$, 
with $\Phi_{x,t}\in C_0^\infty(2B),$ which satisfies
$$0\leq \Phi_{x,t} \lesssim t^{-n}\,,\qquad|\nabla_Y \Phi_{x,t}(Y)|\lesssim t^{-n-1}\,.$$
We then have
\begin{align*}
|\left(\vp_t*\pun\right)&(x)|=|\langle \pun,\vp_t(x-\cdot)\rangle| =\left|
 \iint_{\reu} A\nabla u\cdot
 \nabla\Phi_{x,t}\right|\\[4pt]
&\lesssim \,t^{-n-1}\iint_{\reu\cap 2B} |\nabla u|\,
 \lesssim \,t^{-n-1}\left(\int_{\rn}\left(\NN(|\nabla u| 1_{2B})(y)\right)^{n/(n+1)}dy\right)^{(n+1)/n}\,,
\end{align*}
where in the last step we have used \eqref{eq3} with $q = n/(n+1)$. For $C>0$ chosen sufficiently large, simple geometric considerations then imply that
$$\NN(|\nabla u| 1_{2B})(y) \leq \NN(\nabla u)(y)\,1_{\Delta(x,Ct)}(y)\,,$$
where $\Delta(x,Ct):=\{y\in\rn:|x-y|<Ct\}.$  Combining the last two estimates,
we obtain
$$|\left(\vp_t*\pun\right)(x)|\lesssim 
\left(t^{-n}\int_{|x-y|<Ct}\left(\NN(\nabla u)(y)\right)^{n/(n+1)}dy\right)^{(n+1)/n}\,.$$
Taking the supremum over $t>0$, we obtain \eqref{eq5}, as required.
\end{proof}

\begin{proof}[Proof of Lemma \ref{lemma6.2a}]
We begin with  \eqref{eq6.4*} and follow the proof in the case $p=2$ from~\cite{AAAHK}.  The desired bound for $\partial_t u$ follows readily from
$t$-independence and the Moser local boundedness estimate
\eqref{eq2.M}.  Thus, we only need to consider
$\nabla_x u$. Let $\vec{\psi} \in C_0^\infty (\mathbb{R}^n,\mathbb{C}^n)$, with $\|\vec{\psi}\|_{p'} =1$.
For $t>0$, let $\dd_t$ denote the grid of dyadic cubes $Q$ in $\rn$ with side length satisfying $\ell(Q) \leq t < 2\ell(Q)$,
and for $Q\in \dd_t$, set $Q^*:= 2Q\times (t/2,3t/2)$.
Then, using the Caccioppoli-type estimate on 
horizontal slices in \cite[(2.2)]{AAAHK}, we obtain
\begin{align*}
\bigg|\int_{\rn} \nabla_y  u(y,t) \cdot\vec{\psi}(y) \, dy\bigg| & \leq \,\left(\int_{\rn} |\nabla_y u(y,t)|^p \,dy\right)^{1/p} \|\vec{\psi}\|_{p'} \\[4pt]
&= \,\left(\sum_{Q\in\dd_t} \frac{1}{|Q|} \int_Q \int_{Q} |\nabla_y u(y,t)|^pdy \,dx\right)^{1/p} \\[4pt]
&\lesssim\, \left(\sum_{Q\in\dd_t}\int_Q\left( \frac1{|Q^*|}\iint_{Q^*} 
|\nabla_y u(y,s)|^2\, dyds\right)^{p/2}dx\right)^{1/p} \\[4pt]
&\lesssim\,\left(\int_{\rn}\Big(\NN(\nabla u)\Big)^p\right)^{1/p}\,.
\end{align*}
This concludes the proof of \eqref{eq6.4*}.

Next, we prove \eqref{eq6.5*}.  By  \eqref{eqb2}  and  \eqref{eq6.4*}, and the density 
of $C^\infty_0(\rn)$ in $L^{p'}(\rn)$,
it is enough to prove that
\[
\lim_{t\to 0}\int_{\rn} \vec{N}\cdot A(x) \nabla u(x,t)\, \phi(x)\, dx \,=\, \int_{\rn} \partial_\nu u(x,0)\, \phi(x)\, dx\,,\quad \forall\,\phi\in C^\infty_0(\rn)\,,
\]
where $\vec{N}:=-e_{n+1}$.  For $\phi\in C^\infty_0(\rn)$, let $\Phi$ denote a $C_0^\infty(\ree)$ extension of $\phi$ to $\ree$, so by \eqref{eq5.2}, it suffices to show that 
 \begin{equation}\label{eq6.16a} 
 \lim_{t\to 0}\int_{\mathbb{R}^n} \vec{N}\cdot A\nabla u(\cdot,t)\,\phi
 \,=\, \iint_{\mathbb{R}^{n+1}_+} A \nabla u \cdot \nabla \Phi \,.
 \end{equation}
 Let $P_\eps$ be an approximate identity in $\rn$ with a smooth, compactly supported 
 convolution kernel.
 Integrating by parts, we see that for each $\varepsilon > 0$,
 \begin{equation}\label{eq4.nt5}\int_{\mathbb{R}^n} \vec{N}\cdot 
 P_\varepsilon(A\nabla u(\cdot,t))\,\phi
 = \iint_{\mathbb{R}^{n+1}_+}P_\varepsilon\left( A \nabla u(\cdot,t+s)\right)(x) 
 \cdot \nabla \Phi (x,s)\, dx ds ,\end{equation}
 since $Lu=0$ and our coefficients are $t$-independent.   By the dominated convergence theorem,
we may pass to the limit as $\varepsilon \to 0$ in \eqref{eq4.nt5} to obtain
  \begin{align*}
  \int_{\mathbb{R}^n} \vec{N}\cdot A\nabla u(\cdot,t)\,\phi
 &= \iint_{\mathbb{R}^{n+1}_+} A(x) \nabla u(x,t+s) 
 \cdot \nabla \Phi (x,s)\, dx ds\\[4pt]
 &=\int_t^\infty\!\!\!\int_{\mathbb{R}^{n}} A(x) \nabla u(x,s) 
 \cdot \nabla \Big(\Phi (x,s-t)-\Phi(x,s) \Big) \, dx ds\\[4pt] 
 &\qquad+\, \int_t^\infty\!\!\!\int_{\mathbb{R}^{n}} A(x) \nabla u(x,s) 
 \cdot \nabla\Phi(x,s) \, dx ds\,=:I(t) + II(t)\,.
  \end{align*}
By Lemma \ref{L-improve} and the dominated convergence theorem, we have $I(t) \to 0$, as $t\to 0$, and $II(t) \to \iint_{\mathbb{R}^{n+1}_+} A \nabla u \cdot \nabla \Phi $, as $t\to 0$, hence \eqref{eq6.16a} holds.

Next, we prove \eqref{eq6.6*}.  By \eqref{eq6.3*} and \eqref{eq6.4*},
and the density of $C^\infty_0(\rn)$ in $L^{p'}(\rn)$,
it is enough to prove that
\begin{equation}\label{eq6.17} 
\lim_{t\to 0}\int_{\rn} u(x,t)\, \dv_x \vec{\psi}(x)\, dx \,=\, \int_{\rn} u(x,0)\, \dv_x\vec{\psi}(x)\, dx\,,\quad \forall\,\vec{\psi}\in C^\infty_0(\rn,\mathbb{C}^n)\,.
\end{equation}
Following the proof of \cite[Theorem 3.1, p. 462]{KP}),
we obtain
\begin{equation}\label{neednow}
|u(x,t)-u(x,0)| \leq C t \NN(\nabla u)(x)\,,\quad\text{ for a.e. }x\in \rn\,,
\end{equation}
whence \eqref{eq6.17} follows.

Finally, we consider the case $n/(n+1)<p\leq 1$, and we assume there exists $h\in \dot{H}^{1,p}(\rn)$ such that $\nabla_x u(\cdot,t) \to \nabla_x h$ in the sense of tempered distributions. By Sobolev embedding, $u(\cdot,0)$ and $u(\cdot,t)$ belong (uniformly in $t$) to $L^{q}(\rn)$, 
with $1/q = 1/p - 1/n$.  Note that $q>1$, since $p>n/(n+1)$. For all $\eps\in (0,1)$, by combining the pointwise estimate~\eqref{neednow}, which still holds in this case, with the trivial bound $|u(\cdot,t)-u(\cdot,0)|\leq |u(\cdot,t)| +|u(\cdot,0)|$, we obtain
$$|u(x,t)-u(x,0)| \leq C \left(t \widetilde{N}_*(\nabla u)(x)\right)^\eps \big(|u(x,t)| +|u(\cdot,0)(x)|\big)^{1-\eps}\,,\quad\text{ for a.e. }x\in \rn\,.$$
For $p,q$ as above, set $r=q/(1-\eps)$, $s=p/\eps$, and choose $\eps\in (0,1)$,
depending on $p$ and $n$, so that $1/r+1/s=1$.  Then by H\"older's inequality, for all $\psi\in\s(\rn)$, we have
\begin{align}\begin{split}\label{eqa}
\bigg|\int_{\rn} \big(u(x,t) -&u(x,0)\big)\psi(x)\, dx\bigg| \\[4pt]
&\lesssim \,\|\psi\|_\infty \,t^\eps \left(\int_{\rn} \left(\widetilde{N}_*(\nabla u)\right)^p\right)^{1/s}
\left(\|u(\cdot,0)\|_q + \sup_{t>0} \|u(\cdot,t)\|_q\right)^{1-\eps}\,\to \,0\,,
\end{split}\end{align}
as $t\to 0$.  On the other hand, for all $\vec{\phi}\in \s(\rn,\mathbb{C}^n)$, we have
$$\int_{\rn} \big(u(x,t) -h(x)\big)\dv_x\vec{\phi}(x)\, dx \to 0\,.$$
Combining the latter fact with \eqref{eqa}, applied with $\psi =\dv_x \vec{\phi}$, we obtain
$$\int_{\rn} h(x)\dv_x\vec{\phi}(x)\, dx = \int_{\rn} u(x,0)\dv_x\vec{\phi}(x)\, dx\,,\qquad \forall \vec{\phi} \in \mathcal{S}(\rn,\mathbb{C}^n)\,,$$
thus $\nabla_x h = \nabla_x u(\cdot,0)$ as tempered distributions, and since each belongs to $H^p(\rn)$, we also have $\nabla_x h = \nabla_x u(\cdot,0)$ in $H^p(\rn)$, hence $u(\cdot,0)=h$ in the sense of $\dot{H}^{1,p}(\rn)$.
\end{proof}

\begin{proof}[Proof of Lemma \ref{lemma6.2}]
We first prove that~\eqref{eq6.4} holds in the case that
$u=\mathcal{D}h$ for some $h\in L^p(\rn)$.  
Indeed, in that scenario, the case $p=2$ 
has been treated in \cite[Lemma 4.23]{AAAHK}.  To handle the remaining range of $p$,
we observe that by Theorem~\ref{P-bdd1}, we have
\[
\|N_*(\mathcal{D}h)\|_{L^p(\rn)} \leq\, C_p\, \|h\|_{L^p(\rn)}\,.
\]
We may  therefore exploit the usual technique, whereby a.e. convergence on a dense class (in our case $L^2\cap L^p$), along with
$L^p$ bounds on the controlling maximal operator, imply a.e. convergence for all 
$h\in L^p(\rn)$.   We omit the standard argument.  Convergence in $L^p(\rn)$ 
then follows by the dominated convergence theorem.

Thus, it is enough to show that
$u = \mathcal{D}h$ for some $h\in L^p(\mathbb{R}^n).$  We follow
the corresponding argument for the case $p=2$ given in \cite{AAAHK}, which in
turn follows \cite[pp.~199--200]{St1}, substituting $\mathcal{D}$ for the classical Poisson kernel.
For each $\varepsilon > 0$, set $f_\varepsilon := u(\cdot,\varepsilon)$, and let $u_\varepsilon  := \mathcal{D}\big((-1/2)I + K \big)^{-1} f_\varepsilon$ denote the layer potential solution with data $f_\varepsilon$. We claim that $u_\varepsilon(x,t) = u(x,t+\varepsilon).$ To prove this, we set $U_\varepsilon(x,t) :=  u(x,t + \varepsilon) - u_\varepsilon(x,t)$, and observe that
\begin{enumerate}
\item[(i)] $LU_\varepsilon = 0$ in $\mathbb{R}^{n+1}_+$ (by $t$-independence of coefficients).
\item[(ii)] Estimate \eqref{eq6.3} holds for $U_\varepsilon$, uniformly in
$\varepsilon > 0$. 
\item[(iii)] $U_\varepsilon (\cdot,0) = 0$ and $U_\varepsilon(\cdot,t) \to 0 $ non-tangentially 
and in $L^p$, as $t\to 0$.
\end{enumerate}
Item (iii) relies on interior continuity \eqref{eq2.DGN} and smoothness in $t$, along with the
result for layer potentials noted above. The claim then follows by the uniqueness for $(D)_p$, which is proved in~\cite{HMaMo} for a more general class of operators.

We now complete the proof of the lemma.  
For convenience of notation, for each $t>0$, we set $\mathcal{D}_t h:=\mathcal{D} h(\cdot,t)$.
By \eqref{eq6.3},
$\sup_\varepsilon \|f_\varepsilon\|_{L^p(\rn)} < \infty$, and so there exists a subsequence
$f_{\varepsilon_k}$ converging in the weak$^*$ topology on $L^p(\rn)$ to some $f \in L^p(\rn)$.
For each $g \in L^{p'}(\rn)$, we set $g_1 := \adj\big((-1/2)I+ K \big)^{-1}
\adj (\mathcal{D}_t) g$, and observe that
\begin{align*}
\int_{\mathbb{R}^n} \left[\mathcal{D}_t \big((-1/2)I+ K \big)^{-1} f \right]\,\overline{g}
& = \int_{\mathbb{R}^n} f\, \overline{g_1} = \lim_{k \to \infty} \int_{\mathbb{R}^n} f_{\varepsilon_k}
\overline{g_1}\\
&= \lim_{k \to \infty} \int_{\mathbb{R}^n}\left[\mathcal{D}_t \big((-1/2)I+ K \big)^{-1}
\!f_{\varepsilon_k}\right] \,\overline{g} \\
&=\lim_{k \to \infty} \int_{\mathbb{R}^n} u(\cdot,t+\varepsilon_k)\,\overline{g}
=\int_{\mathbb{R}^n} u(\cdot,t)\, \overline{g}.
\end{align*}
It follows that $u=\mathcal{D} h$, with $h= \big((-1/2)I +K\big)^{-1} f$ in $L^p(\rn)$, as required.
\end{proof}

\section{Appendix: Auxiliary lemmata}
\setcounter{equation}{0}
We now return to prove some technical results that were used to prove Proposition~\ref{r4.1} and Lemmata~\ref{lemma6.1}-\ref{lemma6.2a}. The results are stated in the more general setting of a Lipschitz graph domain of the form $\Omega:=\{(x,t)\in\ree:\, t>\phi(x)\}$, where $\phi:\rn\to \re$ is Lipschitz. We set $M:=\|\nabla\phi\|_{L^\infty(\rn)}<\infty$, and consider constants
\begin{equation}\label{eta-alpha}
0<\eta<\frac{1}{M},\qquad
0<\beta<\min\,\Bigl\{1,\frac{1}{M}\Bigr\}.
\end{equation}
We define the cone 
\[
\Gamma:=\{X=(x,t)\in\RR^{n+1}:\,|x|<\eta t\}.
\]
For $X\in\Omega\subset\ree$, we use the notation $\delta(X):=\dist(X,\partial\Omega)$. 
For $u\in L^2_{\mathrm{loc}}(\Omega)$, we set
\begin{equation}\label{eq2.3}
\widetilde{N}_*(u)(Q):=\sup\limits_{X\in Q+\Gamma}\left(
\fint_{B(X,\,\beta\delta(X))}|u(Y)|^2\,dY\right)^{1/2}, 
\qquad Q\in\partial\Omega\,,
\end{equation}
and
\begin{equation}\label{eq2.3*}
N_*(u)(Q):=\sup\limits_{X\in Q+\Gamma}\,|u(X)|\,,
\qquad Q\in\partial\Omega.
\end{equation}
If we want to emphasize the dependence on $\eta$ and $\beta$, then we shall write $\Gamma_{\eta}$, $\widetilde{N}_{*,\eta,\,\beta}$, $N_{*,\eta}$. The lemma below shows that the choice of $\eta$ and $\beta$, within the permissible range in \eqref{eta-alpha}, is immaterial for $L^p(\partial\Omega)$ estimates of $\widetilde{N}_{*,\eta,\,\beta}$.

\begin{lemma}\label{L-cones}
Let $\Omega\subset\mathbb{R}^{n+1}$ denote a Lipschitz graph domain. For each $p\in(0,\infty)$ and 
\[
0<\eta_1,\,\eta_2<\frac{1}{M},\qquad 
0<\beta_1,\,\beta_2<\min\Bigl\{1\,,\,\frac{1}{M}\Bigr\},
\]
there exist constants 
$C_j=C_j(M,p,\eta_1,\eta_2,\beta_1,\beta_2)\in(0,\infty)$, $j=1,2$, such that 
\begin{equation}\label{equiv-N}
C_1\|\widetilde{N}_{*,\eta_2,\,\beta_2}u\|_{L^p(\partial\Omega)}
\leq \|\widetilde{N}_{*,\eta_1,\,\beta_1}u\|_{L^p(\partial\Omega)}
\leq C_2\|\widetilde{N}_{*,\eta_2,\,\beta_2}u\|_{L^p(\partial\Omega)}
\end{equation}
for all $u \in L^2_{\mathrm{loc}}(\Omega)$. 
\end{lemma}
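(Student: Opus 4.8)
The plan is to establish, for every $\lambda>0$, the level‑set comparison
\begin{equation}\label{levelset-goal}
\big|\{Q\in\pom:\,\widetilde{N}_{*,\eta_1,\,\beta_1}u(Q)>\lambda\}\big|\,\leq\, C\,\big|\{Q\in\pom:\,\widetilde{N}_{*,\eta_2,\,\beta_2}u(Q)>c\lambda\}\big|\,,
\end{equation}
with $C,c\in(0,\infty)$ depending only on $n$, $M$, $\eta_1,\eta_2,\beta_1,\beta_2$, but not on $u$ or $\lambda$. Granting \eqref{levelset-goal}, the right‑hand inequality in \eqref{equiv-N} follows from the layer‑cake formula, since $\|\widetilde{N}_{*,\eta_1,\,\beta_1}u\|_{L^p(\pom)}^p = p\int_0^\infty \lambda^{p-1}|\{\widetilde{N}_{*,\eta_1,\,\beta_1}u>\lambda\}|\,d\lambda \leq Cp\int_0^\infty \lambda^{p-1}|\{\widetilde{N}_{*,\eta_2,\,\beta_2}u>c\lambda\}|\,d\lambda = Cc^{-p}\|\widetilde{N}_{*,\eta_2,\,\beta_2}u\|_{L^p(\pom)}^p$, and this is valid for \emph{every} $p\in(0,\infty)$; the left‑hand inequality in \eqref{equiv-N} then follows by interchanging the roles of $(\eta_1,\beta_1)$ and $(\eta_2,\beta_2)$. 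Thus the whole lemma reduces to \eqref{levelset-goal}.

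To prove \eqref{levelset-goal}, fix $\lambda>0$ and a point $Q=(q,\phi(q))$ with $\widetilde{N}_{*,\eta_1,\,\beta_1}u(Q)>\lambda$; by definition there is $X_0\in Q+\Gamma_{\eta_1}$ with $\big(\fint_{B(X_0,\,\beta_1\delta(X_0))}|u|^2\big)^{1/2}>\lambda$. Cover $B(X_0,\beta_1\delta(X_0))$ by a collection of $N=N(n,M,\beta_1,\beta_2)$ balls $B_i=B(Z_i,r)$ of a common radius $r\sim_{M,\beta_1,\beta_2}\delta(X_0)$, with centers $Z_i$ in that ball and $r$ small enough that $B_i\subset B(Z_i,\beta_2\delta(Z_i))\subset\Omega$; this is where the constraints \eqref{eta-alpha} are used, to keep all these balls inside $\Omega$ and to guarantee $\delta(Z_i)\approx\delta(X_0)$, with comparability constants controlled by $M$. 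By pigeonholing, some $B_i$ satisfies $\fint_{B_i}|u|^2\geq N^{-1}\fint_{B(X_0,\,\beta_1\delta(X_0))}|u|^2>N^{-1}\lambda^2$, whence $\big(\fint_{B(Z_i,\,\beta_2\delta(Z_i))}|u|^2\big)^{1/2}\geq c_1\lambda$ for some $c_1=c_1(n,M,\beta_1,\beta_2)>0$. Since $X_0\in Q+\Gamma_{\eta_1}$ and $|Z_i-X_0|\leq\beta_1\delta(X_0)$, the Lipschitz‑graph geometry (with $\delta(\cdot)$ comparable to the vertical height throughout, by \eqref{eta-alpha}) shows that $Z_i\in Q'+\Gamma_{\eta_2}$ for every $Q'=(q',\phi(q'))\in\pom$ with $q'$ in a surface ball, centered near the projection of $Z_i$, of radius $\sim\eta_2\delta(X_0)$, and this surface ball is contained in the surface ball $B_{\pom}(Q,\,C\delta(X_0))$ for a constant $C=C(M,\eta_1,\eta_2,\beta_1)$. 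Hence $\widetilde{N}_{*,\eta_2,\,\beta_2}u(Q')\geq c_1\lambda$ for all such $Q'$, so the relative measure of $F:=\{\widetilde{N}_{*,\eta_2,\,\beta_2}u>c\lambda\}$ (with $c:=c_1/2$) inside $B_{\pom}(Q,\,C\delta(X_0))$ is bounded below by a fixed constant $c_0=c_0(n,M,\eta_1,\eta_2,\beta_1,\beta_2)>0$; that is, $\M(\mathbf{1}_F)(Q)>c_0$, where $\M$ is the Hardy–Littlewood maximal operator on the Ahlfors‑regular boundary $\pom$.

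Since $Q$ was arbitrary, $\{\widetilde{N}_{*,\eta_1,\,\beta_1}u>\lambda\}\subset\{\M(\mathbf{1}_F)>c_0\}$, and the weak‑type $(1,1)$ bound for $\M$ gives $|\{\M(\mathbf{1}_F)>c_0\}|\leq C c_0^{-1}|F|$, which is exactly \eqref{levelset-goal}. I expect the main obstacle to be the geometric bookkeeping in the preceding paragraph: verifying, with all constants depending only on the admissible parameters, that a \emph{single} witness point $X_0\in Q+\Gamma_{\eta_1}$ for the large $L^2$‑average produces a whole surface ball of proportional radius near $Q$ over which the other non‑tangential maximal function (wider or narrower aperture, larger or smaller averaging radius) is comparably large. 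Everything else is the standard good‑$\lambda$/layer‑cake machinery, and the point of establishing \eqref{levelset-goal} with $C,c$ independent of $u$ and $\lambda$ is precisely that it lets the argument run for all $p\in(0,\infty)$, including the range $p\leq1$ where no $L^p$ boundedness of $\M$ is available.
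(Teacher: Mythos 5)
Your proof is correct, but it is organized differently from the paper's. You prove a single distributional inequality $|\{\widetilde{N}_{*,\eta_1,\beta_1}u>\lambda\}|\leq C|\{\widetilde{N}_{*,\eta_2,\beta_2}u>c\lambda\}|$ that changes the aperture $\eta$ and the averaging radius $\beta$ simultaneously: a witness ball $B(X_0,\beta_1\delta(X_0))$ is converted, by covering and pigeonholing, into a ball $B(Z_i,\beta_2\delta(Z_i))$ with comparably large average, and the Lipschitz geometry turns that into a lower bound for $\widetilde{N}_{*,\eta_2,\beta_2}u$ on a surface ball of radius $\approx\delta(X_0)$ near $Q$; the weak-$(1,1)$ bound for the Hardy--Littlewood maximal operator on the Ahlfors-regular boundary and the layer-cake formula then finish the job for every $p\in(0,\infty)$. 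The paper instead splits the task: the aperture change (for fixed $\beta$) is delegated to the classical Fefferman--Stein argument of \cite[p.~62]{St2} -- which is essentially the same maximal-function/level-set mechanism you use -- while the change of $\beta$ (for fixed $\eta$) is handled by a purely pointwise Fubini argument, writing the average over $B(X,\beta_2\delta(X))$ as an average over $Z\in B(X,\beta'\delta(X))$ of averages over balls $B(Z,\beta_1\delta(Z))$, at the cost of enlarging the aperture to some $\kappa$, after which the cited aperture-change result is applied again. Your route buys a self-contained, symmetric treatment (no ordering of the parameters is needed, and the reverse inequality is literally the same argument with roles swapped); the paper's route buys a pointwise domination $\widetilde{N}_{*,\eta,\beta_2}u(Q)\leq C\widetilde{N}_{*,\kappa,\beta_1}u(Q)$, which is a slightly stronger intermediate statement, at the price of invoking the aperture comparison as a black box. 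The only part of your write-up that deserves to be spelled out is the geometric step you yourself flag: one should record that $\delta(Z_i)\geq(1-\beta_1)\delta(X_0)$ (so the covering radius $r=\beta_2(1-\beta_1)\delta(X_0)$ works with $N$ depending only on $n,\beta_1,\beta_2$), and that for $Z_i=(z_i,s_i)$ with vertical height $h_i=s_i-\phi(z_i)\geq\delta(Z_i)$ one has $Z_i\in Q'+\Gamma_{\eta_2}$ whenever $|q'-z_i|<\eta_2 h_i/(1+\eta_2 M)$, which supplies the surface ball of radius $\gtrsim\delta(X_0)$; both computations use only $M<\infty$ and $\eta_j<1/M$, $\beta_j<\min\{1,1/M\}$, so all constants are admissible.
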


\begin{proof}
First, a straightforward adaptation
of the argument in \cite[p.~62]{St2} gives 
\begin{equation}\label{equiv-N2}
\|\widetilde{N}_{*,\eta_2,\,\beta}u\|_{L^p(\partial\Omega)}
\leq C\|\widetilde{N}_{*,\eta_1,\,\beta}u\|_{L^p(\partial\Omega)}, 
\end{equation}
 whenever $0<\eta_1<\eta_2<1/M$, $p\in(0,\infty)$ and
$\beta\in(0,1)$. The opposite inequality is trivially true (with $C=1$). 
Thus, since $\frac{1-M\eta}{M+\eta}\nearrow\frac{1}{M}$ as $\eta\searrow 0$, estimate 
(\ref{equiv-N}) will follow as soon as we prove that for any 
\begin{equation}\label{ineq-2}
0<\eta<\frac{1}{M},\qquad 
0<\beta_1<\beta_2<\min\Bigl\{1\,,\,\frac{1-M\eta}{M+\eta}\Bigr\},
\end{equation}
there exists a finite constant 
$C=C(M,p,\eta,\beta_1,\beta_2)>0$ such that 
\[
\|\widetilde{N}_{*,\eta,\,\beta_2}u\|_{L^p(\partial\Omega)}
\leq C\|\widetilde{N}_{*,\eta,\,\beta_1}u\|_{L^p(\partial\Omega)}
\]
for all $u\in L^2_{\mathrm{loc}}(\Omega)$. 
To this end, let $\eta$, $\beta_1$, $\beta_2$ be as in (\ref{ineq-2}) 
and consider two arbitrary points, $Q\in\partial\Omega$ and 
$X\in Q+\Gamma_{\eta_2}$, as well as  
two parameters, $\beta'\in (0,\beta_2)$ and $\varepsilon>0$, to be chosen later. The parameter $\varepsilon>0$ and Euclidean geometry ensure that
\begin{align*}\begin{split}
|X-&Y| < \beta_2 \delta(X) \Longrightarrow \\[4pt] &|B(X,\beta'\delta(X))| \leq C(n,\beta_2,\beta',\varepsilon) \, |B(Y,(\beta_2-\beta'+\varepsilon)\delta(X)) 
\cap B(X,\,\beta'\delta(X))|.
\end{split}\end{align*}
We also have 
\begin{equation}\label{trivial-1}
|X-Z|<\beta'\,\delta(X)\Longrightarrow
\frac{1}{1+\beta'}\delta(Z)\leq\delta(X)\leq\frac{1}{1-\beta'}\delta(Z), 
\end{equation}
and 
\[
B(X,\beta'\delta(X))\subset Q+\Gamma_\kappa,\quad\mbox{where }\,
\kappa:=\frac{\eta+\beta'}{1-\beta'\eta}. 
\]
Note that, due to our assumptions, $0<\kappa<{1}/{M}$. 
Using Fubini's Theorem and the preceding considerations, we may then write 
\begin{align*}
\hspace{1cm}&\hspace{-1cm}\fint_{B(X,\,\beta_2\delta(X))} |u(Y)|^2\,dY
= \fint_{B(X,\,\beta_2\delta(X))}
\left(\fint_{B(Y,\,(\beta_2-\beta'+\varepsilon)\delta(X)) 
\cap B(X,\,\beta'\delta(X))}1\, dZ\right)|u(Y)|^2\,dY\\
&=  C(n,\beta_2,\beta',\varepsilon)\fint_{B(X,\,\beta'\delta(X))}
\left(\fint_{B(X,\,\beta_2\delta(X))} 1_{B(Z,\,(\beta_2-\beta'+\varepsilon)\delta(X))}(Y)\, |u(Y)|^2\,dY\right)dZ\\
&=  C(n,\beta_2,\beta',\varepsilon)\fint_{B(X,\,\beta'\delta(X))}
\left(\fint_{B(Z,\,(\beta_2-\beta'+\varepsilon)\delta(X))}|u(Y)|^2\,dY\right)dZ\\
&\leq  C(n,\beta_2,\beta',\varepsilon)\fint_{B(X,\,\beta'\delta(X))}
\left(\fint_{B(Z,\,\frac{\beta_2-\beta'+\varepsilon}{1-\beta'}\delta(Z))}
|u(Y)|^2\,dY\right)dZ\\
&\leq  C(n,\beta_2,\beta',\varepsilon) \left(\widetilde{N}_{*,\kappa,\frac{\beta_2-\beta'+\varepsilon}{1-\beta'}}(u)(Q)\right)^2.
\end{align*}
We now choose $\varepsilon \in (0,\beta_1(1-\beta_2))$ and set
$\beta':=\frac{\beta_2-\beta_1+\varepsilon}{1-\beta_1}$ to ensure that $\beta'\in(0,\beta_2)$ and $\frac{\beta_2-\beta'}{1-\beta'}=\beta_1$, so the inequality above 
further yields 
\begin{equation}\label{geo-2}
\widetilde{N}_{*,\eta,\,\beta_2}(u)(Q)\leq C \widetilde{N}_{*,\kappa,\,\beta_1}(u)(Q)\quad
\mbox{for some }\,\,\kappa=\kappa(\beta_1,\beta_2,\eta)\in(0,1/M).
\end{equation}
Consequently, by (\ref{geo-2}) and (\ref{equiv-N2}), we have
\[
\|\widetilde{N}_{*,\eta,\,\beta_2}u\|_{L^p(\partial\Omega)}
\leq C\|\widetilde{N}_{*,\kappa,\,\beta_1}u\|_{L^p(\partial\Omega)}
\leq C\|\widetilde{N}_{*,\eta,\,\beta_1}u\|_{L^p(\partial\Omega)}.
\]
This finishes the proof of the lemma. 
\end{proof}

We now prove a self-improvement property for $L^p(\Omega)$ estimates of solutions.
\begin{lemma}\label{L-improve}
Let $\Omega\subset\mathbb{R}^{n+1}$ denote a Lipschitz graph domain. Suppose that $w\in L^2_{\mathrm{loc}}(\Omega)$, and that $\widetilde{N}_*(w)\in L^p(\partial\Omega)$ for 
some $p\in(0,\infty)$. First, if $0 < p \leq 2n/(n+1)$, then 
\begin{equation}\label{impr-1}
w\in L^{{p(n+1)}/{n}}(\Omega)\quad\mbox{and}\quad
\|w\|_{L^{{p(n+1)}/{n}}(\Omega)}
\leq C(\partial\Omega,p)\,\|\widetilde{N}_*(w)\|_{L^p(\partial\Omega)}.
\end{equation}
Second, if $0<p<\infty$, and if $Lw=0$ in $\Omega$, then \eqref{impr-1} holds.
Finally, there exists $q=q(n,\Lambda)>2$ such that if $0<p<qn/(n+1)$, and if $w=\nabla u$ for some 
solution $u\in L^2_{1,\,\mathrm{loc}}(\Omega)$ of $Lu=0$ in $\Omega$, then 
\eqref{impr-1} holds.
\end{lemma}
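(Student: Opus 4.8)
The plan is to derive all three assertions from a single mechanism: first replace $|w|^{p(n+1)/n}$ by an $L^2$-averaged quantity on Whitney-scale balls, and then run a distribution-function argument governed by a Hardy-type inequality. Throughout, by Lemma~\ref{L-cones} we are free to fix the aperture parameters $\eta,\beta$ in \eqref{eta-alpha} conveniently, say $\eta=\beta=\tfrac12\min\{1,1/M\}$, and it then suffices to bound $\|w\|_{L^{p(n+1)/n}(\Omega)}$ by $\|\widetilde{N}_*(w)\|_{L^p(\partial\Omega)}$ for this fixed choice, since Lemma~\ref{L-cones} returns the general case. Write $r:=p(n+1)/n$, $c:=\beta/8$, and for $Z\in\Omega$ set $\mathcal A(Z):=\bigl(\fint_{B(Z,\beta\delta(Z)/4)}|w|^2\bigr)^{1/2}$.

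The first step is an averaging estimate. A Fubini argument shows that for any measurable $w$,
$$\iint_\Omega |w(X)|^{r}\,dX\,\lesssim\,\iint_\Omega\Bigl(\fint_{B(Z,c\delta(Z))}|w(Y)|^{r}\,dY\Bigr)\,dZ\,;$$
indeed, writing the left side as $\iint_\Omega\bigl(\fint_{B(X,c\delta(X))}|w(X)|^{r}\bigr)dX$, interchanging the order of integration, and using that $\{Z:|Y-Z|<c\delta(Z)\}\supset B(Y,c\delta(Y)/2)$ while $|B(Z,c\delta(Z))|\approx\delta(Y)^{n+1}$ on that set, produces a fixed lower constant. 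It remains to pass from the $L^r$-average on each ball to $\mathcal A(Z)^{r}$. In case (1), $r\le2$ — which is precisely the constraint $p\le 2n/(n+1)$ — so H\"older's inequality gives $\fint_{B(Z,c\delta(Z))}|w|^{r}\le(\fint_{B(Z,c\delta(Z))}|w|^{2})^{r/2}\lesssim\mathcal A(Z)^{r}$. In case (2), $w$ solves $Lw=0$, so Moser's estimate \eqref{eq2.M} applied on $B(Z,\beta\delta(Z)/4)$ gives $\sup_{B(Z,c\delta(Z))}|w|\lesssim\mathcal A(Z)$, hence $\fint_{B(Z,c\delta(Z))}|w|^{r}\lesssim\mathcal A(Z)^{r}$ for every $r\in(0,\infty)$. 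In case (3), $w=\nabla u$ with $Lu=0$, and one invokes the Meyers/Gehring higher-integrability estimate — a consequence of Caccioppoli's inequality, the Sobolev--Poincar\'e inequality, and Lemma~\ref{lemmaIwaniec} — which provides $q=q(n,\Lambda)>2$ with $(\fint_{B}|\nabla u|^{q})^{1/q}\lesssim(\fint_{2B}|\nabla u|^{2})^{1/2}$ whenever $2B\subset\Omega$; together with H\"older (valid since $r\le q$, i.e.\ $p\le qn/(n+1)$) this again yields $\fint_{B(Z,c\delta(Z))}|w|^{r}\lesssim\mathcal A(Z)^{r}$. In all three cases $\iint_\Omega|w|^{r}\lesssim\iint_\Omega\mathcal A^{r}\,dZ$.

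The second step passes from $\mathcal A$ to $\widetilde{N}_*(w)$. For $Z\in\Omega$ and any $Q\in\partial\Omega$ with $Z\in Q+\Gamma$ one has $\widetilde{N}_*(w)(Q)\ge\bigl(\fint_{B(Z,\beta\delta(Z))}|w|^2\bigr)^{1/2}\gtrsim\mathcal A(Z)$; since $\Delta_Z:=\{Q\in\partial\Omega: Z\in Q+\Gamma\}$ is, by the Lipschitz-graph and Ahlfors-regularity structure, essentially a surface ball at the foot of $Z$ with $\sigma(\Delta_Z)\approx\delta(Z)^{n}$, we get $\mathcal A(Z)\lesssim\inf_{\Delta_Z}\widetilde{N}_*(w)$. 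Now fix $\lambda>0$ and set $O_\lambda:=\{Q\in\partial\Omega:\widetilde{N}_*(w)(Q)>\lambda\}$, an open set of finite measure, and $E_\lambda:=\{Z\in\Omega:\mathcal A(Z)>\lambda\}$. The inequality just obtained forces $\Delta_Z\subset O_{c_1\lambda}$ for some fixed $c_1>0$ whenever $Z\in E_\lambda$, hence $\delta(Z)\lesssim\dist(Z,\partial\Omega\setminus O_{c_1\lambda})$; decomposing $O_{c_1\lambda}$ into boundary Whitney cubes $\{\Delta_j\}$, every such $Z$ lies in a bounded dilate of the Carleson box $R(\Delta_j)$ over some $\Delta_j$, so the standard covering bound
$$|E_\lambda|\,\lesssim\,\sum_j\ell(\Delta_j)^{\,n+1}\,\approx\,\sum_j\sigma(\Delta_j)^{(n+1)/n}\,\le\,\Bigl(\sum_j\sigma(\Delta_j)\Bigr)^{(n+1)/n}\,=\,\sigma(O_{c_1\lambda})^{(n+1)/n}$$
holds, using bounded overlap of the dilated Whitney cubes and $\ell^1\hookrightarrow\ell^{(n+1)/n}$. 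Integrating in $\lambda$ and writing $\psi(\lambda):=\sigma(O_\lambda)$, which is nonincreasing,
$$\iint_\Omega\mathcal A^{r}\,=\,r\!\int_0^\infty\!\lambda^{r-1}|E_\lambda|\,d\lambda\,\lesssim\,\int_0^\infty\!\lambda^{r-1}\psi(c_1\lambda)^{(n+1)/n}\,d\lambda\,,$$
and since $\psi(\lambda)\le p\lambda^{-p}\int_0^\lambda\tau^{p-1}\psi(\tau)\,d\tau$ (monotonicity) one gets the Hardy-type bound $\int_0^\infty\lambda^{r-1}\psi(\lambda)^{(n+1)/n}\,d\lambda\le C_p\bigl(\int_0^\infty\lambda^{p-1}\psi(\lambda)\,d\lambda\bigr)^{(n+1)/n}=C_p\,\|\widetilde{N}_*(w)\|_{L^p(\partial\Omega)}^{\,p(n+1)/n}$; note $C_p$ may blow up as $p\to\infty$, which is consistent with the $p$-dependence allowed in the statement. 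Combining with the first step gives $\|w\|_{L^{p(n+1)/n}(\Omega)}\lesssim\|\widetilde{N}_*(w)\|_{L^p(\partial\Omega)}$, and Lemma~\ref{L-cones} removes the dependence on the chosen aperture.

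The main difficulty is bookkeeping rather than depth. One must keep the averaging radii in the first step small relative to the cone aperture so that the pointwise domination $\mathcal A(Z)\lesssim\inf_{\Delta_Z}\widetilde{N}_*(w)$ holds with the fixed parameters (this is exactly why Lemma~\ref{L-cones} is invoked), and one must verify carefully the covering estimate $|E_\lambda|\lesssim\sigma(O_{c_1\lambda})^{(n+1)/n}$ in the Lipschitz-graph geometry, which rests on Ahlfors regularity of $\partial\Omega$ together with the usual properties of Whitney decompositions and Carleson boxes. The only genuinely external input is the Meyers/Gehring self-improvement of $\nabla u$ in case (3); its exponent $q$ is precisely what produces the restriction $p<qn/(n+1)$, just as the exponent $2$ produces $p\le 2n/(n+1)$ in case (1), while case (2) is unrestricted because Moser's estimate provides an $L^r\to L^2$ comparison for every $r$.
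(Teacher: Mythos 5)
Your proposal is correct, and its first half coincides with the paper's: the same Fubini/averaging step converts $\iint_\Omega|w|^{p(n+1)/n}$ into an integral of Whitney-ball $L^{p(n+1)/n}$-averages, and the three cases are then handled exactly as in the paper --- H\"older for $p\leq 2n/(n+1)$, Moser's estimate \eqref{eq2.M} when $Lw=0$, and the Meyers reverse H\"older inequality (cited from \cite{Ke} in the paper) when $w=\nabla u$ --- so that in each case one is reduced to bounding $\iint_\Omega \mathcal{A}(Z)^{p(n+1)/n}\,dZ$. Where you diverge is the second half. The paper writes $F(Z)^{p+p/n}=F(Z)^p\,d\mu(Z)/dZ$ with $d\mu(Z):=F(Z)^{p/n}\mathbf{1}_\Omega(Z)\,dZ$, invokes Carleson's embedding lemma to get $\iint_\Omega F^{p}\,d\mu\lesssim\|\mu\|_{\mathcal{C}}\int_{\partial\Omega}N_*(F)^p$, and then bounds the Carleson norm $\|\mu\|_{\mathcal{C}}\lesssim\|N_*(F)\|_{L^p(\partial\Omega)}^{p/n}$ by a direct surface-ball computation. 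You instead argue from scratch: the pointwise domination $\mathcal{A}(Z)\lesssim\inf_{\Delta_Z}\widetilde{N}_*(w)$ places the superlevel set $E_\lambda$ inside dilated Carleson boxes over the Whitney cubes of $O_{c_1\lambda}$, the embedding $\ell^1\hookrightarrow\ell^{(n+1)/n}$ gives $|E_\lambda|\lesssim\sigma(O_{c_1\lambda})^{(n+1)/n}$, and a Hardy-type inequality for the nonincreasing distribution function closes the argument. Both routes are sound and of comparable length; the paper's is shorter on the page because it outsources the covering argument to Carleson's lemma (whose standard proof is essentially your Whitney covering step), while yours is self-contained and makes the source of the exponent $(n+1)/n$ --- namely $\ell(\Delta_j)^{n+1}\approx\sigma(\Delta_j)^{(n+1)/n}$ --- completely explicit. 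The only points to be careful about, which you flag yourself, are the standard geometric facts that $\Delta_Z$ is comparable to a surface ball of radius $\delta(Z)$ (this uses $\eta<1/M$) and that $O_{c_1\lambda}$ is a proper open subset of $\partial\Omega$ of finite measure so that the Whitney decomposition applies; neither causes any difficulty.
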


\begin{proof} 
Fix $\eta$, $\beta$ as in (\ref{eta-alpha}). 
We observe that by Lemma \ref{L-cones}, the choice of $\beta$ within the permissible range is immaterial.
We now choose $\beta'$ so that $0<\beta'<\beta/2<1/2$.  Then
\eqref{trivial-1} holds, and we have $\beta'/(1-\beta')<\beta$.

\smallskip

\noindent{\bf Case 1}. Suppose that $w\in L^2_{\mathrm{loc}}(\Omega)$, and that 
$\widetilde{N}_*(w)\in L^p(\partial\Omega)$ for some $0<p\leq2n/(n+1)$. To prove \eqref{impr-1}, we set 
\[
F(Z):=\left(\fint_{B(Z,\,\beta\delta(Z))}|w(X)|^2\,dX\right)^{1/2},
\qquad Z\in\Omega,
\]
and observe that
\begin{align*}\iint_{\Omega} |w(X)|^{p(n+1)/n}\, dX
&=\iint_{\Omega}
\left(\fint_{|X-Z|<\beta' \delta(X)}\,dZ\right)\,
|w(X)|^{p +p/n}\, dX\\[4pt]
&\leq C \iint_{\Omega}
\left(\fint_{|X-Z|<\beta \delta(Z)}\,
|w(X)|^{p +p/n}\,dX\right)\,dZ
\\[4pt]
&\leq C \iint_{\Omega}
\left(\fint_{|X-Z|<\beta \delta(Z)}\,
|w(X)|^2 dX\right)^{(p +p/n)/2}\, dZ\\[4pt]
&=:\iint_{\Omega}F(Z)^{p+p/n}\, dZ 
\leq C\, \|\mu\|_{\mathcal{C}}\int_{\partial\Omega}N_*(F)^p,
\end{align*}
where we have used Fubini's Theorem, \eqref{trivial-1} and the fact that $\beta'/(1-\beta')<\beta$
in the first inequality, the fact that
$p(n+1)/n \leq 2$ in the second, and
Carleson's lemma (which still holds in the present setting) in the third.
In particular, we are using $\|\mu\|_{\mathcal{C}}$ to denote the Carleson norm of the measure
$$d\mu(Z):= F(Z)^{p/n} \,1_\Omega(Z)\,dZ.$$
Also, by definition, $N_*(F) = \widetilde{N}_*(w)$ (cf. \eqref{eq2.3} and \eqref{eq2.3*}), and so
$$\|N_*(F)\|_{L^p(\partial \Omega)}=\|\widetilde{N}_*(w)\|_{L^p(\partial \Omega)}<\infty.$$  
Thus, to finish the proof of Case 1, it is enough to observe that for every ``surface ball" 
$\Delta(P,r):= B(P,r)\cap\partial\Omega\,$, where $P:=(x,\varphi(x))\in \partial\Omega$ and $r>0$, we have
\begin{align*}\frac{1}{|\Delta(P,r)|}\iint_{B(P,r)\cap\Omega} F(Z)^{p/n}\,dZ
&\leq\, C r^{-n}\int_{|x-z|<r}\int_{\varphi(z)}^{\varphi(z)+2r}F(z,s)^{p/n}\, ds dz\\[4pt]
&\leq \,C r\fint_{|x-z|<r}\Big(N_*(F)(z,\varphi(z))\Big)^{p/n}\,  dz\\[4pt]
&\leq\, C \left(\int_{|x-z|<r}
\Big(N_*(F)(z,\varphi(z))\Big)^{p}\,  dz\right)^{1/n}\,\leq \,C 
\|N_*(F)\|_{L^p(\partial\Omega)}^{p/n}\,,
\end{align*}
since the bound \eqref{impr-1} follows, as required.

\smallskip

\noindent{\bf Case 2}.
Now suppose that $Lw=0$ in $\Omega$, and that $\widetilde{N}_*(w)\in L^p(\partial\Omega)$ for 
some $p\in(0,\infty)$. By Moser's sub-mean inequality \eqref{eq2.M}, we have
$\widetilde{N}_*(w)(Q)\approx\|w\|_{L^\infty(Q+\Gamma)}=:N_*(w)(Q)$, 
uniformly for $Q\in\partial\Omega$,
at least if $\beta>0$ is sufficiently small. Under this assumption, 
estimate (\ref{impr-1}) can then be proved as in Case 1, except that invoking 
H\"older's inequality, which was the source of the restriction 
$p\leq 2n/(n+1)$, is unnecessary. This completes the proof of Case 2, since the restriction on the size of $\beta$ is immaterial by Lemma~\ref{L-cones}. 

\smallskip

\noindent{\bf Case 3}.
Finally, suppose that $w=\nabla u$ for some solution $u\in L^2_{1,\,\mathrm{loc}}(\Omega)$ of $Lu=0$ in $\Omega$, and that $\widetilde{N}_*(w)\in L^p(\partial\Omega)$ for some $p\in(0,\infty)$. It is well-known (cf., e.g., \cite{Ke}) that
there exists $q=q(n,\Lambda)>2$ such that 
\[
\left(\fint_{B(X,\,\beta\delta(X))}|w(Y)|^{q}\,dY\right)^{1/q}
\leq C\left(\fint_{B(X,\,2\beta\delta(X))}|w(Y)|^2\,dY\right)^{1/2}.
\]
The proof of \eqref{impr-1} when $0<p<qn/(n+1)$ then proceeds as in Case 1, where Lemma~\ref{L-cones}
is used once more to readjust the size of the balls. 
\end{proof}

\vspace{0.08in}
\vspace{0.08in}

\noindent {\tt S. Hofmann}: Department of Mathematics, 
University of Missouri-Columbia, Columbia, MO 65211, USA, 
{\it e-mail: hofmanns@missouri.edu}

\vskip 0.10in

\noindent {\tt M. Mitrea}: Department of Mathematics, 
University of Missouri-Columbia, Columbia, MO 65211, USA, 
{\it e-mail: mitream@missouri.edu}

\vskip 0.10in

\noindent {\tt A.J.~Morris}: Mathematical Institute, 
University of Oxford, Oxford, OX2 6GG,~UK, 
{\it e-mail: andrew.morris@maths.ox.ac.uk}

\end{document}